\documentclass[12pt,leqno]{article}
\usepackage{verbatim}
\usepackage{amssymb}
\usepackage{amsmath}
\usepackage{tabularx}
\usepackage{theorem}
\usepackage{xypic}

 \usepackage[mathscr]{eucal}

\newtheorem{theorem}[equation]{Theorem}
\newtheorem{corollary}[equation]{Corollary}

\newtheorem{lemma}[equation]{Lemma}

\newtheorem{proposition}[equation]{Proposition}

{\theorembodyfont{\rmfamily}

\newtheorem{definitionplain}[equation]{Definition}
\newtheorem{remark}[equation]{Remark}

\newtheorem{exampleplain}[equation]{Example}
}
\newcommand{\qed}{\hfill $\square$ \medskip}

\newenvironment{proof}[1][Proof]{\noindent\textbf{#1.} }{\
\rule{0.5em}{0.5em}\medskip}
\renewenvironment{proof}[1][Proof]{\noindent\textbf{#1.} }{\
\qed}

\newcommand{\Vogan}{}

\renewcommand{\exp}{\mathrm{exp}}

\newcommand{\Deltagh}{\Delta(\g,\h)}

\newcommand{\frh}{\mathfrak{h}}
\newcommand{\frg}{\mathfrak{g}}

\newcommand{\caH}{\mathcal{H}}

\newcommand{\caM}{\mathcal{M}}

\newcommand{\caB}{\mathcal{B}}

\newcommand{\caF}{\mathcal{F}}
\newcommand{\caS}{\mathcal{S}}

\newcommand{\bfB}{\mathbf{B}}
\newcommand{\bbZ}{\mathbb{Z}}

\newcommand{\Ztwo}{\Z/2\Z}
\newcommand{\wt}{\widetilde}
\newcommand{\lra}{\longrightarrow}
\newcommand{\bs}{\backslash}
\newcommand{\lam}{\lambda}
\newcommand{\frs}{{\mathfrak s}}

\newcommand{\R}{\mathbb R}

\newcommand{\C}{\mathbb C}
\newcommand{\Z}{\mathbb Z}

\newcommand{\G}{G_\C}

\newcommand{\tH}{\wt{H}}

\renewcommand{\b}{\mathcal B}
\newcommand{\tG}{\widetilde G}

\newcommand{\tK}{\widetilde K}

\newcommand{\tGprime}{\widetilde{G'}}

\newcommand{\ch}[1]{#1^\vee}

\newcommand{\ol}{\overline}

\renewcommand{\o}{\circ}

\newcommand{\SL}{\mathrm{SL}}
\newcommand{\SO}{\mathrm{SO}}
\newcommand{\Spin}{\mathrm{Spin}}
\newcommand{\SU}{\mathrm{SU}}
\newcommand{\U}{\mathrm{U}}
\newcommand{\GL}{\mathrm{GL}}

\newcommand{\Ext}{\mathrm{Ext}}
\newcommand{\irr}{\ol{\pi}}
\newcommand{\std}{\pi}

\newcommand{\h}{\mathfrak h}
\renewcommand{\k}{\mathfrak k}

\newcommand\inv{^{-1}}

\newcommand{\g}{\mathfrak g}

\newcommand{\fra}{{\mathfrak a}}
\newcommand{\fru}{{\mathfrak u}}
\newcommand{\frw}{{\mathfrak w}}
\newcommand{\frn}{{\mathfrak n}}
\newcommand{\frB}{{\mathfrak B}}

\newcommand{\Ind}{\mathrm{Ind}}

\newcommand{\Norm}{\text{Norm}}

\newcommand{\Lift}{\text{Lift}}

\newcommand\brach[2]{\langle#1,\ch#2\rangle}

\newcommand{\ghalf}{g_{\frac12}}

\renewcommand{\sec}[1]{\section{#1}
\renewcommand{\theequation}{\thesection.\arabic{equation}}
  \setcounter{equation}{0}}
\newcommand{\subsec}[1]{\subsection{#1}
\renewcommand{\theequation}{\thesection.\arabic{equation}}}

\newcommand{\Lie}{\mathrm{Lie}}
\newcommand{\supp}{\mathit{supp}}

\newcommand{\scB}{\mathscr{B}}

\hfuzz = 10pt

\begin{document}
\title{Duality for Nonlinear Simply Laced Groups}
\author{
Jeffrey Adams\footnote{~{\tt jda@math.umd.edu}, Department of Mathematics, University of Maryland, College Park, MD 20742.  Partially supported 
by NSF DMS-0532393 and DMS-0554278.}
~and Peter E.~Trapa\footnote{~{\tt ptrapa@math.utah.edu}, Department of Mathematics, University of Utah, Salt Lake City, UT 84112. Partially supported 
by NSF DMS-0554118 and DMS-0554278.}
}
\date{}
\maketitle

\sec{Introduction}
\label{s:introduction}

The goal of this paper is to extend some of the
formalism of the Local Langlands Conjecture to certain nonlinear (that
is, nonalgebraic) double covers of real groups.
Such nonlinear groups, for example the two-fold cover of the symplectic
group, have long been known to play an interesting role in the theory of
automorphic forms of algebraic groups.

The Langlands formalism applies to the real points $G$ of a connected,
reductive complex group $G_\C$, and the dual group $\ch G_\C$ plays a
central role.
An important first step in extending Langlands' formalism to nonlinear
groups is to find the correct analog of the dual group $\ch G_\C$. There
is no obvious natural candidate for it.  We consult the refinement of
the local Langlands conjecture given in \cite{vogan:llc}.  The
viewpoint adopted there, originating in work of Deligne, Kazhdan,
Langlands, Lusztig and others, is that the right dual object is a
complex algebraic variety $X$ equipped with an action of the complex
group $\ch G_\C$. The $\ch G_\C$ orbits on $X$ should capture
detailed information about the representation theory of $G$.  

The space $X$ was first defined by Langlands \cite{langlands} 
as the space of admissible homomorphisms of the Weil group into the
L-group of $G$.
In the case of the real field the space was  revised in \cite{abv}.  With the
revision in place, something remarkable happens: $\ch G_\C$-orbits on
$X$ also parametrize (packets of) representations of real forms of the dual
group $\ch G_\C$.  The representation theoretic part of the
Local Langlands Conjecture may then be interpreted as
as a duality, sometimes referred to as Vogan duality,
between representations of real forms of
$G_\C$ and those of $\ch G_\C$.  See \cite[Theorem 10.4]{abv}.
In this form the duality reduces in large
part to \cite[Theorem 1.15]{ic4}.

This suggests the possibility of
extending the duality theory of \cite{ic4} to nonlinear groups as a
first step in  extending the Langlands formalism to them.  Some of
the foundations of this approach were given in 
\cite{rt1}--\cite{rt3}.  In particular, these papers treated
all simple groups of type $A$ as well as the metaplectic double
cover of the symplectic group (although many of the arguments were
case-by-case).  The purpose of the present paper is to give a unified
duality theory for all nonlinear double covers of simply laced real
reductive groups.

To make these ideas more precise we briefly describe
the main result of \cite{ic4}. Let $\b$ be a block of representations
of $G$ with regular infinitesimal character; this is a finite set of
irreducible representations.  (For terminology related to blocks and
infinitesimal
characters, see the discussions at the beginning of Section \ref{s:dual}
and preceding Theorem \ref{t:lang} respectively.)
Write $\caM$ for the $\Z$-module
spanned by the elements of $\b$ viewed as a submodule of the
Grothendieck group.  Then $\caM$ has two
distinguished bases: the irreducible representations in $\b$, and a
corresponding basis of standard modules.  We may thus consider the
matrix relating these two bases.  (The Kazhdan-Lusztig-Vogan
algorithm of \cite{ic3} provides a means to compute this matrix.)  We
say a block $\b'$ of representations is {\em dual} to $\b$ if there is
a bijection between $\b$ and $\b'$ for which, roughly speaking, the
two corresponding change of basis matrices for $\b$ and $\b'$ are
inverse-transposes of each other.  See Remark \ref{r:hecke} for a stronger
formulation in terms of certain Hecke modules.

Suppose the infinitesimal character $\lambda$ of $\b$ is 
integral. According to \cite[Theorem 1.15]{ic4} 
there is a real form $\ch G$ of $\ch\G$ and a block $\ch\b$ for $\ch
G$, so that $\ch\b$ is dual to $\b$. 
If $\lambda$ is not integral the
same result holds with $\ch\G$ replaced by a subgroup of $\ch\G$.
See Section \ref{s:defofdual} for more detail.
(The case of singular infinitesimal character introduces some
subtleties; see Remark \ref{rem:singular}.)

All of the ingredients entering the statement of \cite[Theorem
1.15]{ic4} still make sense for nonlinear groups.  (The computability
of the change basis matrix and Hecke module formalism in the nonlinear setting is
established in \cite{rt3}.)  Thus we seek to establish a similar
statement in the nonlinear case.  Our main result is:

\begin{theorem}[cf. Theorem \ref{t:main}] \label{t:mainintro} Suppose $G$
is a real reductive linear group; see Section \ref{s:notation} for
precise assumptions.  We assume the root system of $G$ is simply
laced.  Suppose $\tG$ is an admissible two-fold cover of $G$
(Definition \ref{d:admissible}).  Let $\b$ be a block of genuine
representations of $\tG$, with regular infinitesimal character. Then
there is a real reductive linear group $G'$, an admissible two-fold
cover $\tGprime$ of $G'$, and a block of genuine representations of
$\tGprime$ such that $\b'$ is dual to $\b$.  
\end{theorem}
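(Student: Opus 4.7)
\medskip
\noindent \textbf{Proof plan.} The strategy is to follow the outline of Vogan's proof of \cite[Theorem 1.15]{ic4} as closely as possible, reducing the assertion to a purely combinatorial duality between the block parameter sets on the two sides. The three main steps are: (i) realize $\b$ combinatorially as a block in the sense of the Hecke module formalism; (ii) construct the dual group $G'$, the cover $\tGprime$, and a block $\b'$ together with a bijection $\b \leftrightarrow \b'$; (iii) verify that this bijection intertwines the two Kazhdan--Lusztig--Vogan structures in the inverse-transpose fashion demanded by Remark \ref{r:hecke}.

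For step (i), I would use the parametrization of genuine representations of $\tG$ established in \cite{rt3}. Each $\pi \in \b$ is attached to a parameter consisting of a $\theta$-stable Cartan subgroup $\tH \subset \tG$ together with a genuine character of $\tH$ satisfying the standard regularity and integrality conditions. The block structure is controlled by Cayley transforms and cross-actions through simple (integral) roots, which endow the set of parameters with the combinatorial structure of a block in the sense of \cite[Def.~1.14]{ic4}. By \cite{rt3} this block carries an action of the Hecke algebra whose change-of-basis polynomials between standard and irreducible modules are computed by the nonlinear analogue of the KLV algorithm.

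For step (ii), which is the heart of the matter, the simply-laced hypothesis is essential. In the linear case the dual group $\ch G$ plays the role of providing a root datum in which roots and coroots are interchanged; here, because the root system is simply laced, the root and coroot lattices can be identified (up to the standard rescaling) and there is no intrinsic Langlands dual to appeal to. Instead I would construct $G'$ and $\tGprime$ directly from the block data: dualize the root datum (which, in the simply-laced case, essentially returns a group of the same type), choose a real form of $G'$ whose $\theta$-stable Cartans match those appearing in $\b$ after swapping real and imaginary roots, and exhibit an admissible two-fold cover $\tGprime$ on which the parity conditions for genuine characters become precisely the conditions complementary to the ones on $\tG$. The bijection $\b \leftrightarrow \b'$ is then the combinatorial duality that swaps compact with noncompact imaginary roots, interchanges real roots satisfying the parity condition with those not satisfying it, and preserves complex roots. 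Verifying that this construction produces an admissible cover (in the sense of Definition \ref{d:admissible}) is the first place where one must work; the second is to confirm that the cross-actions and Cayley transforms match under the bijection.

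Step (iii) should then be largely formal: the simply-laced hypothesis means that each simple Hecke generator $T_s$ acts only on parameters of a single type with respect to the corresponding integral simple root, so the required inverse-transpose relation between the KLV polynomials for $\b$ and $\b'$ can be checked one generator at a time, reducing in each case to a finite calculation in $\SL(2,\R)$, $\tslr$, or their products with a torus. The main obstacle is clearly step (ii): constructing $G'$ and $\tGprime$ in a uniform, case-free manner that respects the duality of all the local data simultaneously. The expectation is that working with the block $\b$ as an abstract combinatorial object, rather than trying to produce $\tGprime$ from $\tG$ by a global recipe, is what makes a unified treatment possible in contrast to the earlier case-by-case work of \cite{rt1}--\cite{rt3}.
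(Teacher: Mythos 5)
Your high-level architecture --- parametrize the block combinatorially, build a dual group/cover/block, and verify an inverse-transpose relation of Hecke-module structures --- matches the shape of the paper's argument, but the step you yourself identify as the heart of the matter, step (ii), is left as a statement of what the construction should achieve rather than an actual construction, and the ingredients needed to carry it out are missing. Concretely: (a) you never isolate the rigidity of genuine regular characters for admissible covers of simply laced groups (Proposition \ref{p:gammarigid}: such a character is determined by $\lambda$ and its central character), nor the fact that the bigrading is automatic (Proposition \ref{p:gradings}: a half-integral imaginary root is compact if and only if it is integral, and a half-integral real root fails the parity condition if and only if it is integral). These two facts are what reduce the existence of a dual block to the existence of a single genuine regular character $\gamma'$ whose weak bigrading is dual to that of a weakly minimal $\gamma$ (Theorem \ref{t:gendual}); without them, the prescription that ``the parity conditions become complementary'' is not something one can simply impose on a real form, a cover, and a genuine character. (b) The claim that dualizing the root datum ``essentially returns a group of the same type'' is wrong outside the half-integral case: the dual block lives on a group whose root system is the dual of the half-integral system $\Delta_{\frac12}(\lambda)=\Delta(2\lambda)$, a proper subsystem of $\ch\Delta$ when $\lambda$ is not half-integral (Definition \ref{d:halfint}; cf.\ Section \ref{ex:split}, where $G'_\C=\Cent_{G_\C}(h^2)$ with $h=\exp(\pi i\lambda)$). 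Also, the duality interchanges imaginary and real roots (compact imaginary with integral real, noncompact imaginary with parity-satisfying real), not compact with noncompact imaginary roots as your sentence states.

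Moreover, the paper does not construct $(G',\tGprime,\wt\gamma')$ ``directly from the block data,'' and your assertion that there is ``no intrinsic Langlands dual to appeal to'' is the opposite of how the proof goes: starting from the weakly minimal genuine character $\wt\gamma$ of $\tG$, one descends via the lifting of characters of \cite{aherb} to a regular character $\gamma$ of the linear group $G$ (Lemma \ref{l:defineGamma2} and Proposition \ref{p:crucial}, which convert the integrality pattern of $\wt\lambda$ into the compactness/parity data of $\gamma$ at infinitesimal character $2\wt\lambda$), then invokes the linear duality of \cite{ic4} together with \cite{abv} to produce a linear pair $(G',\gamma')$ with dual bigrading (after adjustments to arrange acceptability and simple connectedness of the derived group, Section \ref{s:defofdual}), and finally lifts $\gamma'$ to a genuine character $\wt\gamma'$ of an admissible cover $\tGprime$ (Lemma \ref{l:key}), so that Theorem \ref{t:gendual} applies. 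If you want a direct construction instead, you must actually exhibit the real form, the admissible cover, and the genuine character realizing the dual bigrading and prove they exist; nothing in your step (ii) does this. Finally, a smaller but real defect in step (iii): the Kazhdan--Lusztig algorithm of \cite{rt3} does not preserve a single block, so the generator-by-generator verification must be carried out on the union $\bfB$ of blocks under the cross action of $W_{\frac12}(\lambda_a)$, using the extended Hecke algebra and the nonintegral wall crossings (Proposition \ref{p:standardbij2} and the proof of Theorem \ref{t:gendual}); as written, a check confined to one block and to the integral generators would not suffice.
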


Often, but not always, $\tGprime$ is a nonlinear group. 
Let $\Delta$ be the root system of $G_\C$.
The root system $\Delta'$ of $G'_\C$ is a subsystem of the dual root system
$\ch\Delta$, with equality if the infinitesimal character of $\b$ is
{half-integral} (Definition \ref{d:halfint}).
However this is misleading: one should think of $\Delta'$ as a root
subsystem of $\Delta$. This is possible since $\Delta$ is simply laced,
so $\Delta\simeq\ch\Delta$, and this isn't a meaningful distinction.
However this distinction {does} of course appear in the non simply-laced case.
For example if $\tG$ is the two-fold cover
$\wt{Sp}(2n,\R)$ of $Sp(2n,\R)$, then  (in the half-integral case)
$\tG'$ is also
$\wt{Sp}(2n,\R)$, rather than a cover of $SO(2n+1)$
\cite{rt1}.

Theorem \ref{t:main}
together with \cite{rt2} give a duality theory for double covers of all
simple, simply connected groups except those of type $B_n$, $F_4$,
and $G_2$. 
(The case of $G_2$ is partially covered by this paper, and is not
difficult to work out by hand; see 
Example \ref{e:g2}. In addition S.~Crofts has made substantial progress on groups
of type $B$ and $F_4$ \cite{crofts}.)
However, as is also true in \cite{ic4}, treatment of
simple groups is inadequate to handle general reductive groups.  In
particular Theorem \ref{t:main} cannot be reduced to the case of
simple groups.

It is worth noting that the genuine representation theory
of a simply laced nonlinear group $\tG$ is in many ways much simpler
than that of a linear group.  For example $\tG$ has at most one genuine
discrete  series representation with given infinitesimal and central
characters. 
The same result holds for principal series of a split
group; this is the dual of the preceding assertion. See Examples
\ref{ex:verysimple} and \ref{ex:verysimple2}.  
Furthermore, while disconnectedness is a major
complication in the proofs in \cite{ic4}, it plays essentially
no role in this paper.

Finally, there is a close relationship between the duality of Theorem
\ref{t:mainintro} and the lifting of characters developed in
\cite{aherb}.  This is explained at the beginning of Section
\ref{s:defofdual}.

\bigskip

\noindent{\bf Acknowledgments.}
The authors would like to thank David Renard and David Vogan for a
number of helpful discussions during the course of this project, and 
Rebecca Herb for help with a number of technical issues.

\sec{Some notation and structure theory}
\label{s:notation}

\bigskip

A \textit{real form} $G$ of a complex algebraic group $G_\C$ is
the fixed points of an antiholomorphic involution $\sigma$ of $G_\C$.  

By a {\it real reductive linear group} we mean a group in the category
defined in \cite[Section 0.1]{green}. Thus $\g=\Lie_\C(G)$ is a
reductive Lie algebra, $G$ is a real group in Harish-Chandra's class
\cite[Definition 4.29]{knapp_vogan},
has a faithful finite-dimensional
representation,
and has abelian Cartan subgroups.
Examples include real forms of
connected complex reductive groups, and any subgroup of finite index
of such a group.

We will sometimes work in the greater generality of a
{\it real reductive group in Harish-Chandra's class}:
$\g$ is reductive and $G$ is a real group in 
Harish-Chandra's class.
Examples include any two-fold cover of a
real reductive linear group (see Section \ref{s:nonlinear}).
A Cartan subgroup in $G$ is defined to be the
centralizer of a Cartan subalgebra, and may be 
nonabelian.

We will always assume we have chosen a Cartan involution $\theta$ of
$G$ and let $K=G^\theta$, a maximal compact subgroup of $G$.  Fix a
$\theta$-stable Cartan subgroup $H$ of $G$.  Write $H=TA$ with
$T=H\cap K$ and $A$ connected and simply connected.  We denote the Lie
algebras of $G,H,K\dots$ by $\g_0,\h_0,\k_0$, and their
complexifications by $\g,\h,\k$.  The Cartan involution acts on the
roots $\Delta=\Deltagh$ of $\h$ in $\g$. We use $r,i,cx, c,n$ to
denote the real, imaginary, complex, compact, and noncompact roots,
respectively (as in \cite{green}, for example).  We say $G$ is simply
laced if all roots in each simple factor of $\Delta$ have the same
length, and we declare all the roots to be long in this case.  We only
assume $G$ is simply laced when necessary.  We denote the center of 
$G$ by $Z(G)$, and the identity component by $G^0$.

\sec{Nonlinear Groups}
\label{s:nonlinear}

We introduce some notation
and basic results for two-fold covers.
Throughout this section $G$ is a real reductive linear group 
(Section \ref{s:notation}).

\begin{definitionplain}
\label{d:cover}
We say a real Lie group
$\tG$ is a two-fold cover of $G$ if $\tG$ is a central extension of
$G$ by $\Z/2\Z$.  Thus there is an exact sequence of Lie groups
$$
1\rightarrow A\rightarrow \tG\rightarrow G\rightarrow 1
$$
with $A\simeq\Z/2\Z$ central in $\tG$.
We say $\tG$ is nonlinear if it is not a
linear group, i.e.~$\tG$ does not admit a faithful finite-dimensional
representation.
A {\it genuine} representation of $\tG$ is one which does not factor
to $G$.  If $H$ is a subgroup of $G$, we let $\tH$
denote the inverse image of $H$ in $G$.
\end{definitionplain}

\medskip

Suppose $H$ is a $\theta$-stable Cartan subgroup of $G$, 
and $\alpha$ is a real or imaginary root. Corresponding to
$\alpha$ is the subalgebra $\mathfrak m_\alpha$ of $\g$ generated by the root vectors
$X_{\pm\alpha}$. The corresponding
subalgebra of $\g_0$ is isomorphic to
$\mathfrak s\mathfrak u(2)$ if $\alpha$ is imaginary and compact, or 
$\mathfrak
s\mathfrak l(2,\R)$ otherwise.
Let $M_\alpha$ be the corresponding analytic subgroup of $G$.
The following definition is fundamental for the study of
nonlinear groups and appears in many places.

\begin{definitionplain}
\label{d:meta}
Suppose $\tG$ is a two-fold cover of $G$.
Fix a $\theta$-stable Cartan subgroup $H$ of $G$
and a real or noncompact imaginary root $\alpha \in\Deltagh$.  
We say that $\alpha$ is {\em metaplectic} if $\wt{M_\alpha}$
is a nonlinear group, i.e.~is the nontrivial two-fold cover of
$\SL(2,\R)$.
\end{definitionplain}

See \cite[Section 1]{groupswithcovers} for the next result. Part (1)
goes back to \cite{steinberg_cocycle}; also see
\cite[Section 2.3]{mirkovic}.

\begin{lemma}
\label{l:metaplecticRoots}
\begin{enumerate}
\item 
Fix a two-fold cover $\tG$ of a real reductive linear  group $G$ and
assume $\g_0$ is simple.  Suppose $H$ is a Cartan subgroup of $G$ and
$\alpha\in\Deltagh$ is a long real or long noncompact imaginary root.
Then $\tG$ is nonlinear if and only if $\alpha$ is metaplectic.

\item 
Suppose further that $G$ is a real form of a connected, simply
connected, simple complex group. Then $G$ admits
a nonlinear two-fold cover if and only if
there is a $\theta$-stable Cartan subgroup $H$ with a long real root. 
The same conclusion holds with noncompact
imaginary  in place of real. This cover is unique up to
isomorphism.

\item Suppose  $G$ is as in (2), and also assume it is simply laced.
Let $H$ be any $\theta$-stable Cartan subgroup of $G$.
Then $G$ admits a nonlinear two-fold cover if and only if there is a real or noncompact
imaginary root $\alpha\in\Deltagh$.
\end{enumerate} 
\end{lemma}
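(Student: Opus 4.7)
The plan is to prove (1) first and then derive (2) and (3) from it together with Steinberg's cocycle construction of the cover. For (1), one direction is essentially tautological: if $\alpha$ is metaplectic then $\wt{M_\alpha}$ is nonlinear by definition, and since $\wt{M_\alpha}$ sits inside $\tG$, the latter cannot admit a faithful finite-dimensional representation either. For the converse I would argue that when $\g_0$ is simple, ``being metaplectic'' is an invariant of long real-or-noncompact-imaginary roots which is independent of the choice of $(H,\alpha)$. The key inputs would be: (a) $\tG$-conjugation preserves metaplecticity of $(H,\alpha)$; (b) elements of the real Weyl group $\Wgh$ (which can be lifted to $\tG$ modulo the central $\Z/2\Z$) preserve it; and (c) Cayley transforms carry $\wt{M_\alpha}$ for a long real root to $\wt{M_\beta}$ for a long noncompact imaginary root on a different $\theta$-stable Cartan, preserving whether the cover is nontrivial. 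Because $\g_0$ is simple, all long roots lie in a single $W$-orbit in their respective Cartans, and any two $\theta$-stable Cartans are connected by a chain of Cayley transforms; so the set of long real-or-noncompact-imaginary roots is a single equivalence class under (a)--(c), and all such roots are simultaneously metaplectic or none are. If none is, Steinberg's cocycle (cited in the statement) shows the central extension splits, contradicting nonlinearity.

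Part (2) then follows: nonlinearity of some cover is equivalent by (1) to the existence of a long real root (equivalently, by Cayley, a long noncompact imaginary root) on some $\theta$-stable Cartan; and given such a root, Steinberg's construction explicitly produces the cover by pulling back the metaplectic cocycle of $\SL(2,\R)$ along the associated long-root subgroup. Uniqueness up to isomorphism reduces to showing that the relevant class in $H^2(G,\Z/2\Z)$ is determined by its behavior on a single long $\SL(2,\R)$, which is again a consequence of the cocycle analysis of \cite{steinberg_cocycle} together with the conjugacy used in (1). Part (3) is immediate from (2) combined with the simply laced convention that every root is long, once one notes that the existence of a real or noncompact imaginary root in $\Deltagh$ is insensitive to the choice of $\theta$-stable Cartan $H$, since Cayley transforms connect the various Cartans and shuffle their real and noncompact imaginary roots.

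The step I expect to be the main obstacle is the conjugacy assertion underlying (1): verifying that in a simple $\g_0$, every pair $(H,\alpha)$ with $\alpha$ a long real-or-noncompact-imaginary root can be linked to every other by a sequence of inner automorphisms, real Weyl group elements, and Cayley transforms. This is classical structure theory but requires care regarding real forms that may fail to admit certain Cartans, and it is precisely the step for which we would rely on \cite{groupswithcovers} and \cite{mirkovic}. Once it is in hand, the rest of the lemma is a direct consequence of the compatibility of Steinberg's cocycle with the long-root $\SL(2)$'s.
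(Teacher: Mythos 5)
A point of orientation first: the paper does not prove this lemma at all --- it is quoted from \cite[Section 1]{groupswithcovers}, with part (1) attributed to \cite{steinberg_cocycle} and \cite[Section 2.3]{mirkovic} --- so your sketch has to stand on its own or else reduce, as the paper does, to those references. The parts of your outline that are solid: the easy direction of (1) (a faithful finite-dimensional representation of $\wt G$ restricts to one of the subgroup $\wt{M_\alpha}$), the observation that a real root $\alpha$ and its Cayley transform $\beta$ give literally the same subgroup $M_\alpha = M_\beta$ of $G$, and the reduction of (3) to (2). For the Cartan-independence needed in (3), a cleaner justification than ``Cayley transforms shuffle roots'' is: a $\theta$-stable Cartan with no real and no noncompact imaginary roots is simultaneously maximally split and fundamental, so there is only one conjugacy class of Cartan subgroups, and hence the existence of such roots cannot depend on the choice of $H$.

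The genuine gaps are in the hard direction of (1) and in (2). You conclude from ``no long root is metaplectic'' that Steinberg's cocycle shows the central extension \emph{splits}, contradicting nonlinearity. This conflates splitting with linearity: nonsplit but linear double covers are ubiquitous ($\SL(2,\R)\rightarrow \mathrm{PSL}(2,\R)$, or the $\mathrm{Pin}^\pm$ covers of $O(n)$ appearing in the paper's own examples), so a nonsplit extension yields no contradiction; what must actually be produced is a faithful finite-dimensional representation of $\wt G$, and that deduction --- together with the linking statement you yourself flag, that metaplecticity is constant over all long real/noncompact imaginary roots on all Cartans --- is precisely the content of \cite{groupswithcovers}, which your proposal defers to rather than proves. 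Likewise, in (2) the implication ``$G$ admits a nonlinear cover $\Rightarrow$ some Cartan has a long real root'' is not ``by (1)'': if no Cartan carries a long real or long noncompact imaginary root, part (1) is vacuous, and one needs a separate argument that every two-fold cover of such a real form is linear (for simply connected $G_\C$ these forms are compact, complex, or of $\Spin(n,1)$ type, and are topologically simply connected, so only the split cover occurs); your sketch omits this step. So as written the proposal is a reasonable map of where the proof lives --- essentially the same sources the paper cites --- but the splitting-versus-linearity step as stated would fail, and the only-if direction of (2) is missing.
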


\begin{definitionplain}
\label{d:admissible}
We say a two-fold cover $\tG$ of a real reductive linear group $G$ 
is {\it admissible} if for every
$\theta$-stable Cartan subgroup $H$  every  long real or long
noncompact imaginary root is metaplectic.
\end{definitionplain}

\begin{lemma}
\label{l:admissible}
Let $G$ be a real  reductive linear  group, and 
retain the notation of Definition \ref{d:cover}.  Let $G_1, \dots, G_k$
denote the analytic subgroups of $G$ corresponding to the simple
factors of the Lie algebra of $G$.
A two-fold cover $\tG$ of $G$ is admissible if and only if  
$\wt G_i$ is nonlinear for each $i$ such that $G_i$ admits
a nonlinear cover.
\end{lemma}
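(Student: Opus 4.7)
The plan is to reduce admissibility of $\tG$ to the simple factors $G_1,\dots,G_k$ one at a time, and then apply Lemma \ref{l:metaplecticRoots} inside each factor. First I would observe that the decomposition $\g_0 = \g_{0,1}\oplus\cdots\oplus\g_{0,k}$ forces any $\theta$-stable Cartan subgroup $H$ of $G$ to factor as a commuting product $H = H_1\cdots H_k$, with $H_i$ a $\theta$-stable Cartan subgroup of $G_i$, and the root system $\Delta(\g,\h)$ to decompose as the disjoint union $\bigsqcup_i\Delta(\g_i,\h_i)$. Each root $\alpha$ lives in a unique factor $\Delta(\g_i,\h_i)$; its length, its classification as real, imaginary, complex, compact or noncompact, and the subgroup $M_\alpha$ all depend only on that factor. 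Consequently $\wt{M_\alpha}$ is determined by the restricted cover $\wt{G_i}\to G_i$, and admissibility of $\tG$ is equivalent to admissibility of each restricted cover $\wt{G_i}$.

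Next I would fix an index $i$ and split into two cases. If $G_i$ admits no nonlinear two-fold cover, then by Lemma \ref{l:metaplecticRoots}(2) (transferred from the simply connected cover as discussed below) no $\theta$-stable Cartan of $G_i$ carries a long real or long noncompact imaginary root, so the admissibility condition on $\wt{G_i}$ is vacuous. If instead $G_i$ does admit a nonlinear two-fold cover, I would fix any $\theta$-stable Cartan $H_i$ together with a long real (or long noncompact imaginary) root $\alpha\in\Delta(\g_i,\h_i)$; Lemma \ref{l:metaplecticRoots}(1) then asserts $\alpha$ metaplectic if and only if $\wt{G_i}$ is nonlinear. Applying this uniformly across every long real or long noncompact imaginary root in every $\theta$-stable Cartan of $G_i$ shows that such roots are all metaplectic together, or none of them are; hence admissibility of $\wt{G_i}$ is equivalent to $\wt{G_i}$ being nonlinear. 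Combining the two cases over all $i$ yields the lemma.

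The main obstacle I anticipate is the invocation of Lemma \ref{l:metaplecticRoots}(2) in the case analysis above: the cited statement assumes $G_i$ is a real form of a simply connected simple complex group, whereas the analytic subgroup $G_i$ of a linear $G$ need not be simply connected. Bridging this gap requires two observations, both essentially structure-theoretic. First, the existence of a long real (or long noncompact imaginary) root in some $\theta$-stable Cartan is intrinsic to the pair $(\g_{0,i},\theta|_{\g_{0,i}})$ and so is unaffected by passing between $G_i$ and its simply connected cover. Second, the existence of a nonlinear two-fold cover of $G_i$ matches the corresponding property of the simply connected cover, because any nonlinear cover of a simply connected simple group pushes down to a nonlinear cover of $G_i$ and, conversely, pulls back to one upstairs. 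Once these two correspondences are in place, the simply connected hypothesis in Lemma \ref{l:metaplecticRoots}(2) can be removed, and the argument above goes through.
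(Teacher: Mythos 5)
Your first paragraph (reduction to the simple factors, and the observation that metaplecticity of a root is detected entirely inside the restricted cover $\wt{G_i}$ of the factor containing it) is fine; that reduction plus Lemma \ref{l:metaplecticRoots}(1) is exactly what the paper means when it calls the lemma immediate. The gap is in your case analysis, at the step ``if $G_i$ admits no nonlinear two-fold cover then no $\theta$-stable Cartan of $G_i$ carries a long real or long noncompact imaginary root.'' That implication is false, and the bridge you offer for it --- that a nonlinear cover of the simply connected group ``pushes down'' to a nonlinear cover of $G_i$ --- is precisely where it breaks. Take $G_i=PSL(2,\R)$: its only two-fold covers are $PSL(2,\R)\times\Z/2\Z$ and $\SL(2,\R)$, both linear, so $G_i$ admits no nonlinear two-fold cover; yet its split (and its compact) Cartan carries a long real (resp.\ long noncompact imaginary) root. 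The metaplectic cover of $\SL(2,\R)$ does not descend to a double cover of $PSL(2,\R)$, because descending would require the kernel of $\SL(2,\R)\rightarrow PSL(2,\R)$ to lift to a subgroup of the cover, and in $\wt{\SL}(2,\R)$ the preimage of $\{\pm I\}$ is cyclic of order $4$ (cf.~\eqref{e:metalpha}); this failure of lifting is the metaplectic phenomenon itself. The pull-back half of your bridge is fine; it is the push-down half, the one your first case actually uses, that fails.

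This is not only a defect of the argument: under your reading of the hypothesis (``$G_i$ admits a nonlinear \emph{two-fold} cover'') the statement you are proving is false. For $G=PSL(2,\R)$ and $\tG=\SL(2,\R)$ your criterion is vacuously satisfied, yet $\tG$ is not admissible, since $\wt{M_\alpha}=\SL(2,\R)$ is linear and the long real root is not metaplectic (Definitions \ref{d:meta} and \ref{d:admissible}). The intended reading of ``$G_i$ admits a nonlinear cover'' is that some covering group of $G_i$, of whatever degree (e.g.\ its universal cover), is nonlinear; this is consistent with the example following Proposition \ref{p:admissiblecovers}, which cites compact, complex and $\Spin(n,1)$ factors, groups with no nonlinear covers of any kind. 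With that reading, ``$G_i$ admits a nonlinear cover'' is equivalent to the existence of a long real or long noncompact imaginary root in some $\theta$-stable Cartan of $G_i$: existence of such a root depends only on $(\g_0,\theta)$ (your correct observation), Lemma \ref{l:metaplecticRoots}(2) handles the simply connected form, and if every cover of $G_i$ were linear then in particular its universal cover would be, forcing all covers of the simply connected form to be linear as well. Once that dictionary is in place, the lemma is Lemma \ref{l:metaplecticRoots}(1) applied factor by factor, which is the paper's one-line proof; so the repair is to drop the two-fold reading and replace the push-down claim by the contrapositive argument just sketched.
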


This is immediate from  Lemma \ref{l:metaplecticRoots}(1) and
the definitions.  Roughly speaking, the result says admissible
covers are as nonlinear as possible.

\begin{proposition}
\label{p:admissiblecovers}
Assume $G$ is a real form of a connected, simply connected,
semisimple complex algebraic group $\G$.
Then $G$ admits an
admissible two-fold cover, which is  unique up to isomorphism.
\end{proposition}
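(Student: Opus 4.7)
The plan is to reduce to the simple case and then glue. First, decompose the real Lie algebra as $\g_0 = \bigoplus_{i=1}^k \g_{0,i}$ into simple ideals, and let $G_i \subset G$ be the corresponding analytic subgroup. Since $\G$ is connected and simply connected, $G$ itself is connected (real points of simply connected complex semisimple groups are connected), and the product map $G_1 \times \cdots \times G_k \to G$ is an isomorphism of Lie groups. Moreover, each $G_i$ is a real form of a connected, simply connected complex group: either $G_i$ is a real form of a single simple factor of $\G$, or the real structure on $\G$ interchanges two isomorphic simple factors and $G_i$ is a complex simple group regarded as a real Lie group.

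For existence, apply Lemma \ref{l:metaplecticRoots}(2) to produce the unique nonlinear two-fold cover $\tG_i$ of $G_i$ whenever one exists; otherwise (which by Lemma \ref{l:metaplecticRoots}(2) forces $G_i$ to have no long real or noncompact imaginary roots in any $\theta$-stable Cartan subgroup) set $\tG_i = G_i \times \Z/2\Z$, on which admissibility is vacuous. Now define
\[
\tG \;=\; \bigl(\tG_1 \times \cdots \times \tG_k\bigr)\big/K,
\]
where $K \subset (\Z/2\Z)^k$ is the kernel of the product map $(\Z/2\Z)^k \to \Z/2\Z$, with the $i$th factor $\Z/2\Z$ identified with $\ker(\tG_i \to G_i)$. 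Then $1 \to \Z/2\Z \to \tG \to G \to 1$ is exact, and the image of $\tG_i \times \{1\} \times \cdots \times \{1\}$ in $\tG$ is isomorphic to $\tG_i$. By Lemma \ref{l:admissible}, $\tG$ is admissible.

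For uniqueness, let $\tG'$ be a second admissible two-fold cover of $G$. The inverse image of each $G_i$ in $\tG'$ is a two-fold cover of $G_i$ which, by admissibility and Lemma \ref{l:metaplecticRoots}(1), must be nonlinear whenever $G_i$ admits a nonlinear cover; by Lemma \ref{l:metaplecticRoots}(2) this restriction is then isomorphic to $\tG_i$. What remains, and is the main obstacle, is to show that $\tG'$ is determined by its restrictions to the direct factors $G_i$. Two-fold central extensions of a connected Lie group $G$ by $\Z/2\Z$ are classified by $\Hom(\pi_1(G), \Z/2\Z)$; for $G = \prod_i G_i$ this splits as $\prod_i \Hom(\pi_1(G_i), \Z/2\Z)$ via restriction, so the restriction map on isomorphism classes of covers is a bijection. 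Combined with the simple-factor determination above, this yields $\tG' \simeq \tG$.
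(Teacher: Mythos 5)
Your existence argument is essentially the paper's: decompose $G$ as the direct product of the analytic subgroups attached to the simple ideals of $\g_0$, put the unique nonlinear cover on each factor that has one (Lemma \ref{l:metaplecticRoots}(2)) and the split cover otherwise, and pass to the quotient of the product by the kernel of the multiplication map on the central $(\Z/2\Z)^k$; admissibility then comes from Lemma \ref{l:admissible}, exactly as in the paper (whose ``natural quotient'' of $\overline G=\prod\tG_i$ is the same quotient you write down). Where you genuinely diverge is uniqueness. The paper argues internally: the preimages $\wt{G_i}$ in an admissible cover $\tG'$ commute and generate $\tG'$, so $\tG'$ is a quotient of $\overline G$ and hence isomorphic to the constructed cover. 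You instead invoke the classification of central extensions of a connected Lie group by $\Z/2\Z$ via $\Hom(\pi_1(G),\Z/2\Z)$, together with the splitting $\Hom(\pi_1(\prod_i G_i),\Z/2\Z)\simeq\prod_i\Hom(\pi_1(G_i),\Z/2\Z)$. That classification is correctly applied here (covers in Definition \ref{d:cover} need not be connected; the trivial homomorphism is the split cover, and for kernel $\Z/2\Z$ an isomorphism over $G$ is automatically an equivalence of extensions), and it buys a clean statement that a two-fold cover of $G$ is determined by its restrictions to the factors, at the cost of importing a topological classification the paper does not use.

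One caveat applies to your argument and to the paper's alike: on a simple factor $G_i$ admitting no nonlinear cover, admissibility imposes no condition, so your matching step (and the paper's claim that any admissible $\tG'$ is a quotient of $\overline G$) silently needs the restriction of $\tG'$ to such a $G_i$ to be the split cover. This holds because every real form of a connected, simply connected simple complex group that admits no nonlinear two-fold cover (compact, complex, $\Spin(n,1)$, $\Sp(p,q)$, and the like) in fact satisfies $\Hom(\pi_1(G_i),\Z/2\Z)=1$, hence has no nontrivial two-fold cover at all; but this is not contained in Lemma \ref{l:metaplecticRoots} as quoted, so in your write-up it deserves an explicit sentence rather than being absorbed into ``admissibility is vacuous.'' With that point acknowledged, your proof is correct and at the same level of rigor as the paper's, with a different mechanism for the uniqueness half.
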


\begin{proof}
It is well-known (for example see \cite[Theorem
2.6(2)]{platonov_rapinchuk})
that 
$\G\simeq\prod_{i=1}^nG_{i,\C}$ (direct product)
with each $G_{i,\C}$ simple, and
$G\simeq \prod_{i=1}^n G_i$ accordingly.
Let $\overline G=\prod \tG_i$ where $\tG_i$ is the unique nontrivial
two-fold cover of $G_i$, if it exists, or the trivial cover
otherwise. (cf.~Lemma \ref{l:metaplecticRoots}(2).)
Then $\overline G$ has a natural quotient which is an
admissible cover of $G$.

Conversely if $\tG$ is an admissible cover of $G$ then
$G=\prod\wt{G_i}$ (the product taken  in $\tG$; the terms $\tG_i$
necessarily commute). Then $\tG$ is a quotient of the group $\overline
G$ constructed above, is isomorphic to the cover constructed in the
previous paragraph. 
\end{proof}





\begin{exampleplain}
\hfil
\begin{enumerate}
\item  Every  two-fold cover of an abelian Lie group is admissible.
\item The group $GL(n,\R)$ ($n\ge 2$) has two admissible covers, up
  to isomorphism, each
  with the same nontrivial restriction to $SL(n,\R)$.
These correspond to the two two-fold covers of $O(n)$, sometimes
denoted $Pin^\pm$.
The $\sqrt{\det}$ cover of $GL(n)$ is not admissible.

\item The group $U(p,q)$ $(pq>0)$ has three inequivalent admissible
covers, corresponding to the three nontrivial
two-fold covers of $K=U(p)\times U(q)$, all having the same
nontrivial restriction to $SU(p,q)$.

\item Suppose each analytic subgroup of $G$ corresponding to a simple
factor of $\g_0$ admits no nonlinear cover.  (For example, suppose
each such analytic subgroup is compact, complex, or $\Spin(n,1)$ for
$n\geq 4$.)  Then every two-fold cover of $G$ is admissible, and nonlinear.

\end{enumerate}
\end{exampleplain}

\sec{Definition of Duality}
\label{s:dual}

We recall some definitions and notation from \cite{green} and
\cite{ic4}.
In this section we let $G$ be any real reductive group in Harish-Chandra's
class (cf.~Section \ref{s:notation}).

Recall that block equivalence is the relation on irreducible modules
generated by $\pi \sim \eta$ if $\Ext^1(\pi,\eta)$ is nonzero.  See
\cite[Section 9.1]{green}.  
It is easy to see that all elements in
a block have the same infinitesimal character, which we refer to as
the infinitesimal character of the block, and therefore each block is
a finite set. If $\pi$ is an irreducible representation with regular
infinitesimal character we write $\b(\pi)$ for the block containing
$\pi$. 

Fix a block $\b$ with regular infinitesimal character.  
Theorem \ref{t:lang} below provides a parameter set $\ol\scB$
and for each $\gamma\in\ol\scB$ 
a standard representation $\pi(\gamma)$, with a unique irreducible
quotient $\ol\pi(\gamma)$. 
The map from $\ol\scB$ to $\b$ given by $\gamma\rightarrow
\overline\pi(\gamma)$ 
is a bijection.

Given a Harish-Chandra module $X$, we let $[X]$ denote its image
in the 
Grothendieck group of all Harish-Chandra modules. 
Consider the $\Z$-module $\Z[\b]$ with basis $\{[\pi]\in\b\}=\{ [\irr(\gamma)] \,
|\,\gamma\in\ol\scB\}$, viewed as a subgroup of the
Grothendieck group.
Then $\Z[\b]$ is also spanned by the standard modules
$\{[\std(\gamma)]\,|\,\gamma \in\ol\scB\}$.  We may thus consider the
change of basis matrices in the Grothendieck group:
\begin{equation}
\label{e:Mm} \begin{aligned}
{[\std(\delta)]}&=\sum_{\gamma \in \ol\scB}
m(\gamma,\delta)[\irr(\gamma)];\\ 
[\irr(\delta)]&=\sum_{\gamma \in \ol\scB}
M(\gamma,\delta)[\std(\gamma)].
\end{aligned} 
\end{equation} 

For the linear groups considered in Section \ref{s:notation} the
above matrices are computable by Vogan's algorithm
\cite{ic3}.  For any two-fold cover of such a group they are computable by
the main results of \cite[Part I]{rt3}.

\begin{definitionplain}
\label{d:vogandual}
Suppose $G$ and $G'$ are real reductive groups in Harish-Chandra's class,
with blocks $\b$ and $\b'$ having regular infinitesimal
character. Write $\ol\scB$ and $\ol\scB'$ for the sets
parametrizing $\b$ and $\b'$. 

Suppose there is a bijection
\begin{align*}
\Phi \; : \; \ol\scB &\lra \ol\scB' \\
\delta &\lra \delta'
\end{align*}
and a parity function
\[
\epsilon \; : \;\ol\scB \times\ol\scB' \lra \{\pm 1\}
\]
satisfying
\[
\epsilon(\gamma, \delta) \epsilon(\delta, \eta) = \epsilon(\gamma, \eta),
\qquad \epsilon(\gamma, \gamma) = 1.
\]
Then we say that $\b$ is  \Vogan dual to $\b'$ (with respect to $\Phi$)
if for all $\delta, \gamma \in \ol\scB$,
\begin{subequations}
\renewcommand{\theequation}{\theparentequation)(\alph{equation}}
\begin{align}
m(\delta,\gamma)&=\epsilon(\delta,\gamma)M(\gamma',\delta'); 
\text{ or equivalently,} \\
M(\delta,\gamma)&=\epsilon(\delta,\gamma)m(\gamma',\delta').
\end{align}
\end{subequations}
We say $\b'$ is  \Vogan {\em dual} to $\b$ 
if it is \Vogan dual to $\b$ with respect to some bijection $\Phi$.
\end{definitionplain}

\begin{remark}
\label{r:hecke}
Typically the duality of two blocks $\b$ and $\b'$ is deduced from a
stronger kind of duality.  For instance, consider the setting of
\cite{ic4}, and fix a block $\b$ at infinitesimal character $\lam$.
Then the $\bbZ[q,q^{-1}]$ span $\caM$ of the elements of $\b$ is
naturally a module for the Hecke algebra of the integral Weyl group of
$\lam$.  The paper \cite{ic4} constructs a block $\b'$ at
infinitesimal character $\lam'$ such that the integral Weyl group of
$\lam'$ is isomorphic as a Coxeter group to the integral Weyl group of
$\lam$.  Denote the associated Hecke algebra $\caH$, and write the
$\caH$ modules corresponding to $\b$ and $\b'$ as $\caM$ and $\caM'$.
Each module has a natural basis corresponding to the standard modules
(Section \ref{s:regularCharacters}) attached to block elements.  The
$\Z[q,q^{-1}]$-linear dual $\caM^*$ of $\caM$ also has a natural
structure of an $\caH$ module, and it is equipped with the basis dual
to the standard module basis for $\caM$.  Roughly speaking, Vogan
defines a bijection from this basis of $\caM^*$ to the basis of
standard modules for $\caM'$ (up to sign), and proves that this induces an
isomorphism of $\caH$ modules.  His proof of the Kazhdan-Lusztig
conjecture immediately implies that $\b$ and $\b'$ are dual in the
sense of Definition \ref{d:vogandual}.  In this way the character multiplicity
duality statement of \cite[Theorem
  1.15]{ic4} is deduced from the duality of $\caH$ modules in 
\cite[Proposition 13.12]{ic4}.

For the simply laced groups we consider below, an (extended) Hecke algebra
formalism is provided by \cite{rt3}.  We will ultimately prove Theorem
\ref{t:mainintro} by proving a duality of certain modules for an
extended Hecke algebra in the proof of Theorem \ref{t:gendual} below.
\end{remark}

\begin{remark}
\label{r:length}
In cases in which the matrices $M(\gamma,\delta)$ are computable, the
computation depends in an essential way on a length function
$l:\ol\scB\rightarrow \mathbb N$.
In the setting below, the correct notion in our setting is the extended
integral length of \cite{rt3}.  (When $G$ is
linear, this reduces to the notion of integral length defined in
\cite{green}.)  Then the parity function appearing in Definition
\ref{d:vogandual} may be taken to be
\[
\epsilon(\gamma, \delta) = (-1)^{l(\gamma) + l(\delta)}.
\]
\end{remark}







\sec{Regular Characters}
\label{s:regularCharacters}

In this section $G$ is a real reductive linear group (Section \ref{s:notation})
and $\tG$ is a two-fold cover of $G$.
Since $\tG$ is a central extension of $G$, a Cartan subgroup $\tH$ of $\tG$
is the inverse image of a Cartan subgroup $H$ of $G$.
It is often the case that $\tH$ is not abelian.
Note that
\begin{equation}
\label{e:zh}
\wt{H^0}\subset Z(\tH)
\end{equation}
since  the map $(g,h)\rightarrow ghg\inv h\inv$ is a
continuous map from $\tH\times\tH$ to $\pm1$.
In particular $|\tH/Z(\tH)|$ is finite.

Equivalence classes of
irreducible genuine representations of $\tH$ are parametrized by
genuine characters of $Z(\tH)$ according to the next lemma.

\begin{lemma}
\label{l:repT}
Write $\Pi(Z(\tH))$ and
$\Pi(\tH)$ for equivalence classes of
irreducible genuine representations of $Z(\tH)$ and
$\tH$, respectively, and let $n=|\tH/Z(\tH)|^{\frac12}$.
For every $\chi\in\Pi(Z(\tH))$ there is a unique representation $\pi=\pi(\chi)\in\Pi(\tH)$ 
for which $\pi|_{Z(\tH)}$ is a multiple
of $\chi$.
The map
$\chi\rightarrow\pi(\chi)$ is a bijection between
$\Pi(Z(\tH))$ and $\Pi(\tH)$.
The dimension of $\pi(\chi)$ is $n$, and
$Ind_{\wt{Z(H)}}^{\tH}(\chi)=n\pi$.
\end{lemma}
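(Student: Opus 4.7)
The plan is to recognize $\tH$ as a finite Heisenberg-type extension of $Z(\tH)$ and invoke the standard representation theory of twisted group algebras. Since $H$ is a Cartan subgroup of the linear reductive group $G$ it is abelian, so $[\tH,\tH]$ lies in $\ker(\tH\to H)=\{\pm 1\}$, a central copy of $\Z/2\Z$ (Definition \ref{d:cover}). Combined with (\ref{e:zh}), this shows that $F:=\tH/Z(\tH)$ is a \emph{finite abelian} group and that the commutator descends to a bimultiplicative alternating pairing $c\colon F\times F\to\{\pm 1\}$, nondegenerate by the very definition of $Z(\tH)$. For a genuine character $\chi$ of $Z(\tH)$, the composite $c_\chi:=\chi\circ c$ is then a nondegenerate alternating pairing $F\times F\to\{\pm 1\}\subset\C^\times$, using $\chi(-1)=-1$; a short argument (elements of odd order and of squares pair trivially against everything) forces $F\cong(\Z/2\Z)^{2m}$ with $c_\chi$ a symplectic form over $\F_2$, so in particular $|F|=n^2$.

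The main step is to identify the category of genuine $\tH$-representations on which $Z(\tH)$ acts by $\chi$ with the category of modules over the twisted group algebra $\C^\chi[F]$, obtained by choosing a set-theoretic section $F\to\tH$ and twisting multiplication by $\chi$. The resulting commutator relation is $[g][f]=c_\chi(g,f)[f][g]$, so the center of $\C^\chi[F]$ is one-dimensional: if $\sum a_f[f]$ commutes with every $[g]$, nondegeneracy of $c_\chi$ forces $a_f=0$ for $f\neq e$. Hence $\C^\chi[F]$ is a simple $\C$-algebra of dimension $n^2$, isomorphic to $M_n(\C)$. Its unique simple module is the desired representation $\pi(\chi)$, of dimension $n$; the map $\chi\mapsto\pi(\chi)$ is a bijection because Schur's lemma attaches to each irreducible genuine $\pi\in\Pi(\tH)$ a well-defined central character on $Z(\tH)$, giving the inverse. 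Finally, $\Ind_{Z(\tH)}^{\tH}(\chi)$ corresponds to the regular module over $M_n(\C)$, which is $n$ copies of the unique simple module, yielding $\Ind_{Z(\tH)}^{\tH}(\chi)=n\pi(\chi)$.

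The main technical point is the center computation for $\C^\chi[F]$, but this is forced by the nondegeneracy of $c_\chi$ and presents no serious obstacle. The remaining bookkeeping (well-definedness of $\C^\chi[F]$ up to the choice of section, and matching of central characters under the equivalence of categories) is standard.
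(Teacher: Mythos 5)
Your argument is correct, and in fact the paper offers no argument of its own for Lemma \ref{l:repT}: it simply remarks that the proof is elementary and cites \cite[Proposition 2.2]{shimura}. What you wrote is essentially the standard Stone--von Neumann/twisted group algebra proof underlying that citation: $H$ abelian forces $[\tH,\tH]\subset\{\pm1\}$, the commutator descends to a nondegenerate alternating $\{\pm1\}$-valued pairing on the finite abelian group $F=\tH/Z(\tH)$ (finite by \eqref{e:zh} and the finiteness of $\pi_0(H)$), genuineness of $\chi$ makes $c_\chi$ nondegenerate, and the genuine-$\chi$ representations of $\tH$ are modules over the twisted group algebra $\C^\chi[F]$ of dimension $|F|=n^2$.

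One step needs patching: from ``the center of $\C^\chi[F]$ is one-dimensional'' you conclude it is simple, but trivial center alone does not imply simplicity for a finite-dimensional algebra (the upper-triangular matrices have scalar center). You need in addition that $\C^\chi[F]$ is semisimple, which follows from the usual Maschke averaging argument (the basis elements $[f]$ are invertible, so $p\mapsto\frac1{|F|}\sum_f [f]\,p\,[f]^{-1}$ projects onto complements); semisimple with center $\C$ then gives $\C^\chi[F]\simeq M_n(\C)$ and everything else in your argument, including $\Ind_{Z(\tH)}^{\tH}(\chi)$ being the rank-one free, hence regular, module. Alternatively, and closer to how such statements are usually proved in the cited literature, you can build $\pi(\chi)$ directly: choose a maximal isotropic subgroup $L\subset F$ for $c_\chi$ (its preimage in $\tH$ is abelian, since $\chi([x,y])=1$ and $\chi(-1)=-1$ force $[x,y]=1$), extend $\chi$ to that preimage, and induce to $\tH$; Mackey theory gives irreducibility, dimension $n$, and the multiplicity statement. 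Finally, to run Schur's lemma on an arbitrary element of $\Pi(\tH)$ you should observe that irreducible genuine representations of $\tH$ are finite dimensional (e.g.\ because $Z(\tH)$ has finite index, or because in this paper they arise as the representations $\Gamma$ in regular characters); with that remark your bijection $\chi\leftrightarrow\pi(\chi)$ is complete.
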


The proof is elementary; for example see \cite[Proposition 2.2]{shimura}.
The lemma shows that an irreducible representation of $\tH$ is
determined by a character of $Z(\tH)$. 
The fact that this is  smaller than $\tH$ makes the representation
theory of $\tG$ in many ways  simpler than that of $G$.
In fact for an admissible cover of a simply laced group we have the
following important result.

\begin{proposition}[\cite{aherb}, Proposition 4.7]
\label{p:ZH}
Suppose $G$ is simply laced, $\tG$ is an admissible two-fold cover of
$G$, and $H$ is a Cartan subgroup of $G$. 
Then
\begin{equation}
Z(\tH)=Z(\tG)\wt{H^0}.
\end{equation}
In particular a genuine character of $Z(\tH)$ is determined by its
restriction to $Z(\tG)$ and its differential.  
\end{proposition}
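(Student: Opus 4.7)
The inclusion $Z(\tG)\wt{H^0}\subseteq Z(\tH)$ is immediate: the first factor centralizes all of $\tG\supseteq\tH$, and \eqref{e:zh} gives $\wt{H^0}\subseteq Z(\tH)$. For the reverse inclusion I would pass to a commutator pairing. Since the kernel $\{\pm 1\}$ of $\tG\to G$ is central, the commutator $c(\tilde h_1,\tilde h_2):=\tilde h_1\tilde h_2\tilde h_1^{-1}\tilde h_2^{-1}$ takes values in $\{\pm 1\}$ and depends only on the images $h_i\in H$; the usual central-extension identities make $c:H\times H\to\{\pm 1\}$ bimultiplicative. By \eqref{e:zh} it vanishes whenever either argument lies in $H^0$, so it descends to an alternating bimultiplicative form
$$\bar c\;:\;(H/H^0)\times(H/H^0)\;\lra\;\{\pm 1\}.$$
An element $\tilde h\in\tH$ lies in $Z(\tH)$ precisely when its class in $H/H^0$ lies in the radical of $\bar c$, so everything reduces to identifying this radical with the image of $Z(G)$ in $H/H^0$.

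Next, using Proposition \ref{p:admissiblecovers} I would reduce to the case where $G$ is a real form of a connected, simply connected, simple complex group. In this setting $H/H^0$ is generated by the elements $m_\alpha=\alpha^\vee(-1)$ for real roots $\alpha$, together with the analogous elements coming from Cayley transforms of noncompact imaginary roots. Admissibility together with Lemma \ref{l:metaplecticRoots}(3) forces every such rank-one subgroup $\wt{M_\alpha}$ to be isomorphic to $\wt{\SL}(2,\R)$. The Steinberg--Matsumoto metaplectic cocycle computation in $\wt{\SL}(2,\R)$, combined with the way the various $\wt{M_\beta}$ normalize $\tH$, should yield
$$\bar c(m_\alpha,m_\beta)\;=\;(-1)^{\brach{\alpha}{\beta}}\qquad\text{for real roots }\alpha,\beta,$$
with the key $-1$ sign arising precisely because $\alpha$ is metaplectic. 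The simply laced hypothesis ensures $\brach{\alpha}{\beta}=\brach{\beta}{\alpha}$, consistent with $\bar c$ being alternating over $\F_2$. A parallel analysis, conjugating to a Cartan in which the imaginary root in question has become real, handles the generators coming from Cayley transforms.

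With such a formula in hand, the identification of the radical of $\bar c$ with the image of $Z(G)$ becomes a statement about the Cartan matrix of $G_\C$: a product of $m_\alpha$'s represents a radical class iff the corresponding combination of coroots, evaluated at $-1$, lies in $Z(G)$, which is standard for simply connected simply laced groups (and transparent in $A_n$ from the familiar description of the center). The second assertion of the proposition is then immediate from $Z(\tH)=Z(\tG)\wt{H^0}$: a genuine character on this product is determined by its restrictions to the two factors, which must agree on their intersection, and the restriction to the (at most two-component) group $\wt{H^0}$ is determined by the differential together with the genuine condition on $-1$. The principal obstacle in the plan is the explicit cocycle computation producing the formula for $\bar c$; this is exactly where admissibility (no relevant rank-one subgroup is linear) and simply laced (a uniform $(-1)^{\brach{\alpha}{\beta}}$ formula across all root pairs, with no short-root complications) together do their essential work.
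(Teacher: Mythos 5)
Before comparing approaches, note that the paper itself supplies no proof of Proposition \ref{p:ZH}: it is quoted verbatim from \cite{aherb}, Proposition 4.7, so your sketch can only be judged on its own terms. Your opening moves are fine: $Z(\tG)\wt{H^0}\subseteq Z(\tH)$ is immediate, and since $H$ is abelian the commutator does descend to an alternating bimultiplicative form $\bar c$ on $H/H^0$, with $\tilde h\in Z(\tH)$ exactly when the class of $h$ lies in the radical. The gap is in the target you then set yourself. The proposition equates $Z(\tH)$ with $Z(\tG)\wt{H^0}$, so the radical must be identified with the image in $H/H^0$ of the projection of $Z(\tG)$ to $G$; you aim instead at the image of $Z(G)$, and these differ precisely when $G$ is disconnected --- which is permitted here and is the case the paper cares about. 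Concretely, let $G=\GL(2,\R)$ with an admissible cover (its restriction to $K=O(2)$ is $\mathrm{Pin}^\pm(2)$, as in the paper's example) and let $H$ be the diagonal split Cartan. Then $-I\in Z(G)$, but in $\mathrm{Pin}^\pm(2)$ the lift $e_1e_2$ of the rotation by $\pi$ satisfies $e_1(e_1e_2)e_1^{-1}=-e_1e_2$, so lifts of $-I$ and of the reflection $\mathrm{diag}(1,-1)\in H$ do not commute. Hence the class of $-I$ lies in the image of $Z(G)$ but not in the radical of $\bar c$: your intermediate claim ``radical $=$ image of $Z(G)$'' is false, while the proposition itself holds because here $Z(\tG)$ is only the preimage of the positive scalars and $Z(\tH)=\wt{H^0}$. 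Relatedly, the genuinely hard content of the statement --- that an element of $\tH$ commuting with all of $\tH$ in fact commutes with all of $\tG$ after adjusting by $\wt{H^0}$ --- is never engaged: your computation never leaves the Cartan subgroup.

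The proposed reduction ``using Proposition \ref{p:admissiblecovers} to the case of a real form of a simply connected simple group'' is also not justified: that proposition only gives existence and uniqueness of admissible covers for such real forms, and passing from a general reductive, possibly disconnected $G$ to that setting changes $Z(G)$, changes $H/H^0$ (which you then assume is generated by the $m_\alpha$'s --- false already for $\GL(2,\R)$ above, where $\mathrm{diag}(1,-1)$ is not in $\Gamma_r(H)H^0$), and changes the cover. In the simply connected case the real points are connected, so every preimage of a central element of $G$ is automatically central in $\tG$ and your Steinberg--cocycle computation of $\bar c(m_\alpha,m_\beta)=(-1)^{\brach{\alpha}{\beta}}$ would indeed finish the argument; but it is exactly the disconnected reductive case where the proposition has content, and that content is lost in your reduction. (The clause about ``generators coming from Cayley transforms of noncompact imaginary roots'' is a minor confusion --- such elements lie in other Cartans or in $H^0$ --- but it is harmless compared with the two points above.)
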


\begin{definitionplain}[{{\cite[Definition 2.2]{vogan:unit}}}]
\label{d:reg}
Suppose $G$ is a real reductive group in Harish-Chandra's class.
A regular character of $G$ is a triple
\begin{equation}
\label{e:regchar}
\gamma=(H,\Gamma,\lambda)
\end{equation}
consisting of a $\theta$-stable Cartan subgroup $H$,
an irreducible representation $\Gamma$ of $H$, and
$\lambda\in\h^*$, satisfying the following conditions. 
Let $\Delta=\Deltagh$,
let $\Delta^i$ be the imaginary roots, and $\Delta^{i,c}$ the
imaginary compact roots.
The first condition is
\begin{subequations}
\renewcommand{\theequation}{\theparentequation)(\alph{equation}}
\begin{equation}
\label{e:regchar1}
\brach\lambda\alpha \in \mathbb{R}^\times
\text{ for all }\alpha\in\Delta^i.
\end{equation}
Let
\begin{equation}
\label{e:regchar2}
\rho_i(\lambda)=\frac12\sum_{\substack{\alpha\in\Delta^i\\ \langle\lambda,\ch\alpha\rangle>0}}\alpha,
\quad\quad
\rho_{i,c}(\lambda)=\frac12\sum_{\substack{\alpha\in\Delta^{i,c}\\ \langle\lambda,\ch\alpha\rangle>0}}\alpha.
\end{equation}
The second condition is
\begin{equation}
\label{e:regchar4}
d\Gamma=\lambda+\rho_i(\lambda)-2\rho_{i,c}(\lambda).
\end{equation}
\end{subequations}
Finally we assume 
\begin{equation}
\label{e:reg}
\langle \lam, \ch \alpha \rangle \neq 0 \text{ for all } \alpha \in \Delta.
\end{equation}
We say $\gamma$ is {\it genuine} if $\Gamma$ is genuine.
\end{definitionplain}

Of course if $G$ is linear then $H$ is abelian, and $\Gamma$ is a
character of $H$. If $G$ is nonlinear (and $\gamma$ is genuine) by Lemma \ref{l:repT} we could
replace the irreducible representation $\Gamma$ of $H$ with a
character of $Z(H)$. 

Write $H=TA$ as usual, and let $M$ be the centralizer of $A$ in $G$.  The conditions on
$\gamma$ imply that there is a unique relative discrete series
representation of $M$, denoted
$\pi_M$, with Harish-Chandra parameter $\lam$,
whose lowest $M\cap K$-type has $\Gamma$ as a highest weight.
Equivalently $\pi_M$ is determined by $\lambda$ and its central
character, which is $\Gamma$ restricted to the center of $M$.
Define a parabolic subgroup $MN$ by
requiring that the real part of $\lam$ restricted to $\fra_\o$ be
(strictly) positive on the roots of $\fra$ in $\frn$.  Then set
\[
\std(\gamma) = \Ind_{MN}^G(\pi_M \otimes 1 \! \! 1).
\]
By the choice of $N$ and \eqref{e:reg}, $\std(\gamma)$ has a unique
irreducible quotient which we denote $\irr(\gamma)$.
The central character of $\pi(\gamma)$ is
$\Gamma$ restricted to $Z(G)$, which we refer to as the central
character of $\gamma$.

The maximal compact subgroup $K$ acts in the obvious way on the set of
regular characters, and the equivalence class of $\irr(\gamma)$ only
depends on the $K$ orbit of $\gamma$.
Write $cl(\gamma)$ for the $K$-orbit of $\gamma$, and
$\gamma\sim\gamma'$ if $cl(\gamma)=cl(\gamma')$.

Fix a maximal ideal $I$ in the center of the universal
enveloping algebra $U(\g)$ of $\g$.  We call $I$ an infinitesimal
character for $\g$, and as usual say that a $U(\g)$ module has
infinitesimal character $I$ if it is annihilated by $I$.
For $\h$ a Cartan subalgebra  and $\lambda\in\h^*$ let
$I_\lambda$ be the infinitesimal character for $\g$ determined by
$\lambda$ via the Harish-Chandra homomorphism.
We say $I_\lambda$ is regular if $\lambda$ is regular.
If
$\gamma=(H,\Gamma,\lambda)$ is a regular character then
$\pi(\gamma)$ has infinitesimal character $I_\lambda$. 

Fix a regular infinitesimal character $I$.
Let $\caS_I$ be the set of 
$K$-orbits of regular characters $\gamma=(H,\Gamma,\lambda)$ with
$I_\lambda=I$.
In this setting the Langlands classification takes the following form.
See \cite[Section 2]{vogan:unit}, for example.

\begin{theorem}
\label{t:lang}
Suppose $G$ is a real reductive group in Harish-Chandra's class.
Fix a regular infinitesimal character $I$ for $\g$.
The map $\gamma\rightarrow
\overline\pi(\gamma)$ is a bijection from $\caS_I$ to the set of
equivalence classes of irreducible admissible representations of $G$ with 
infinitesimal
character $I$.
\end{theorem}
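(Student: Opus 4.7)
The plan is to follow the standard strategy of Langlands, as adapted to the Harish-Chandra class in \cite[Section 2]{vogan:unit}, reducing to Harish-Chandra's parametrization of relative discrete series and Langlands' disjointness theorem.

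First I verify that $\gamma \mapsto \irr(\gamma)$ is well-defined on $\caS_I$. Given $\gamma = (H,\Gamma,\lam)$, condition \eqref{e:regchar1} combined with Harish-Chandra's parametrization produces the relative discrete series $\pi_M$ of $M$ with Harish-Chandra parameter $\lam$; \eqref{e:regchar4} matches its central character with $\Gamma$ restricted to $Z(M)$. Regularity \eqref{e:reg} puts the real part of $\lam$ in a unique chamber of $\fra$, which determines $N$. Langlands' quotient theorem then shows $\std(\gamma)$ has a unique irreducible quotient $\irr(\gamma)$ of infinitesimal character $I_\lam$, and replacing $\gamma$ by a $K$-conjugate clearly gives an equivalent representation, so the map descends to $\caS_I$.

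For injectivity, suppose $\irr(\gamma) \cong \irr(\gamma')$. Langlands' disjointness theorem forces the cuspidal parabolic data to agree up to $G$-conjugation, which by cuspidality can be arranged to be $K$-conjugation. Harish-Chandra's bijection between relative discrete series and regular Harish-Chandra parameters, together with \eqref{e:regchar4}, then identifies $\gamma$ and $\gamma'$ up to $K$.

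Surjectivity is the main point, and I would handle it as follows. Let $\pi$ be irreducible admissible with infinitesimal character $I$. Casselman's subrepresentation theorem (equivalently, Langlands' asymptotic analysis of matrix coefficients) realizes $\pi$ as the Langlands quotient of an induced representation $\Ind_{MN}^G(\sigma \otimes \nu)$, where $MN$ is a cuspidal real parabolic, $\sigma$ is tempered on $M$, and $\mathrm{Re}(\nu)$ is strictly positive on the roots of $\fra$ in $\frn$. Harish-Chandra's classification of tempered representations writes $\sigma$ as induced from a relative discrete series on a smaller cuspidal Levi; induction in stages then presents $\pi$ as $\irr(\gamma)$ for a regular character $\gamma = (H,\Gamma,\lam)$ attached to a $\theta$-stable Cartan $H$, and the conditions \eqref{e:regchar1}--\eqref{e:reg} are read off from the construction. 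The hardest step is the analytic input --- Casselman's subrepresentation theorem --- while Harish-Chandra's parametrization of discrete series, although deep, is by now standard. All of these ingredients are available for groups in the Harish-Chandra class, so the classical argument transfers directly.
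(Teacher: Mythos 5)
The paper offers no proof of Theorem \ref{t:lang}: it is quoted as the standard form of the Langlands classification for groups in Harish-Chandra's class, with a citation to \cite[Section 2]{vogan:unit}, and your sketch reproduces exactly the classical argument behind that citation (Langlands quotient and disjointness theorems, Casselman/Langlands asymptotics, Harish-Chandra's parametrization of relative discrete series), so it is essentially the same approach. The one point you gloss over --- which the paper glosses as well --- is that regularity of $\lambda$ in the sense of \eqref{e:reg} does not force $\mathrm{Re}\,\lambda|_{\fra_0}$ to be regular, so $N$ is only determined up to choices on walls where $\mathrm{Re}\,\lambda$ vanishes; one then uses the regular infinitesimal character to see that the tempered, unitarily induced-from-discrete-series pieces are irreducible (trivial $R$-groups) and independent of that choice, which is what makes $\std(\gamma)$ and its unique irreducible quotient well defined.
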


Proposition \ref{p:ZH} implies the following important rigidity result
for genuine regular characters of admissible two-fold covers. 

\begin{proposition}
\label{p:gammarigid}
Let $\tG$ be an admissible two-fold cover of a simply laced real
reductive linear group.
Suppose $\gamma=(\tH,\Gamma,\lambda)$ is a genuine regular character of
$\tG$. 
Then $\gamma$ is determined by
$\lambda$ and the restriction of $\Gamma$ to $Z(\tG)$.
\end{proposition}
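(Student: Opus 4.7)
The plan is essentially to chain together Lemma~\ref{l:repT} and Proposition~\ref{p:ZH}, reducing the question to a statement about a genuine character of the abelian group $Z(\tG)\wt{H^0}$, whose $\wt{H^0}$-part can be read off from its differential alone.

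First I would observe that $\lambda\in\h^*$ already fixes the complexification $\h$, hence the $\theta$-stable Cartan $H$ and its preimage $\tH$; so the real content is that the irreducible genuine representation $\Gamma$ of $\tH$ is recovered from $\lambda$ together with $\Gamma|_{Z(\tG)}$. By Lemma~\ref{l:repT}, $\Gamma$ is determined by the genuine character
\[
\chi_\Gamma \;:=\; \Gamma|_{Z(\tH)},
\]
so it suffices to pin down $\chi_\Gamma$.

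Because $G$ is simply laced and $\tG$ is admissible, Proposition~\ref{p:ZH} gives $Z(\tH)=Z(\tG)\wt{H^0}$ and asserts that the genuine character $\chi_\Gamma$ of $Z(\tH)$ is determined by its restriction to $Z(\tG)$ together with its differential. The restriction $\chi_\Gamma|_{Z(\tG)} = \Gamma|_{Z(\tG)}$ is prescribed by hypothesis. The differential is supplied by the defining relation \eqref{e:regchar4} of a regular character:
\[
d\chi_\Gamma \;=\; d\Gamma|_{\h_0} \;=\; \lambda + \rho_i(\lambda) - 2\rho_{i,c}(\lambda),
\]
which depends only on $\lambda$, since the imaginary and imaginary-compact root subsystems $\Delta^i,\Delta^{i,c}\subset\Delta(\g,\h)$ are determined by $\h\subset\g$, and $\h$ in turn is determined by $\lambda$. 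Therefore $\chi_\Gamma$ is determined by $\lambda$ and $\Gamma|_{Z(\tG)}$, whence $\Gamma$ is, and hence so is the triple $\gamma=(\tH,\Gamma,\lambda)$.

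I do not anticipate any genuine obstacle: the proposition is essentially a direct corollary of Lemma~\ref{l:repT} and Proposition~\ref{p:ZH} together with the elementary fact that a character of a connected Lie group is determined by its differential. The only mildly delicate point is that the prescribed data (the central character on $Z(\tG)$ and the differential on $\h_0$) must be compatible on the overlap $Z(\tG)\cap \wt{H^0}$; but this compatibility is automatic in our setting, since we are asserting the uniqueness of a $\Gamma$ that is assumed to exist, rather than constructing one from scratch out of arbitrary data.
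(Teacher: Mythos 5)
Your proposal is correct and follows essentially the same route as the paper: reduce $\Gamma$ to a genuine character of $Z(\tH)$ via Lemma \ref{l:repT}, then apply Proposition \ref{p:ZH} together with the fact that $d\Gamma=\lambda+\rho_i(\lambda)-2\rho_{i,c}(\lambda)$ is determined by $\lambda$ via \eqref{e:regchar4}. The paper's proof is just a terser statement of exactly this argument.
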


This follows immediately from Proposition \ref{p:ZH}
since the differential of  $\Gamma$ is determined
by $\lambda$ by \eqref{e:regchar4}. Consequently $\tG$ typically has
few genuine irreducible representations.

\begin{exampleplain}
\label{ex:verysimple}  
Suppose $\tG$ is split and fix a genuine 
central character and an infinitesimal character for $\tG$.
Then there is precisely
one minimal principal series representation with the given
infinitesimal  and central characters.
This is very different from a linear group; for example if $G$ is
linear and semisimple of rank $n$ it has $2^n/|Z(G)|$ minimal
principal series representation with given infinitesimal and central
character. 
\end{exampleplain}

\begin{exampleplain}
\label{ex:verysimple2}
This example is dual to the preceding one.
Suppose $\tG$ is an admissible cover of
a real form $G$ of a simply connected, semisimple
complex group. Then $\tG$ has at most 
one discrete series representation with given infinitesimal and 
central character. 
Again this is very different from the linear case. 
\end{exampleplain}

\begin{definitionplain}
\label{d:blockregchar}
We say two genuine regular characters $\gamma$ and
$\delta$ are block equivalent if  $\irr(\gamma)$ and
$\irr(\delta)$ are block equivalent as in 
Section \ref{s:dual}.
A block of
regular characters is an equivalence class for this relation.
\end{definitionplain}

We write $\scB$ for a block of regular characters. Clearly $K$ acts on
$\scB$, and we set $\ol\scB=\scB/K$. If $\gamma$ is a regular character
write $\scB(\gamma)$ for the block of regular characters containing $\gamma$, and
$\ol\scB(\gamma)=\scB(\gamma)/K$.
Fix $\gamma_0$, and let $\ol\scB=\ol\scB(\gamma_0)$ and
$\b(\irr(\gamma_0))$ be the corresponding blocks of regular characters
and representations, respectively. 
Then the map
$\gamma\rightarrow\irr(\gamma)$ factors to $\ol\scB$, 
and the map
\begin{equation}
\ol\scB\ni\gamma\rightarrow \irr(\gamma)\in\b
\end{equation}
is a bijection.

\medskip

To conclude, we note that Proposition \ref{p:gammarigid} has a
geometric interpretation.  Revert for a moment to the general setting
of $G$ in Harish-Chandra's class.  Let $\frB$ be the variety of Borel
subalgebras of $\g$.  Given a regular character
$\gamma=(H,\Gamma,\lambda)$ of $G$, let $\mathfrak b$ be the Borel
subalgebra of $\g$ containing $\h$ making $\lambda$ dominant.  This
defines a map from $K$ orbits of regular characters for $G$ to $K_\C$
orbits on $\frB$ (see \cite[Proposition 2.2(a)--(c)]{ic3}).  
Using Theorem \ref{t:lang}, we may interpret this
as a map, which we denote $\supp$, from irreducible representations of
$G$ (with regular infinitesimal character) to $K_\C$ orbits on $\frB$.
Although we do not need it, we remark that
$\supp(\pi)$ is indeed (dense in) the support of a suitable localization
of $\pi$; see \cite[Section 5]{ic3}.

\begin{proposition}
\label{p:geom}
Suppose $\tG$ is an admissible cover of $G$, a simply laced
linear real reductive group.  Let $\caB$ be a block of genuine 
representations of $\tG$ with regular infinitesimal character.
Then $\supp$ is injective, and hence $\caB \hookrightarrow
K_\C\bs \frB$.
\end{proposition}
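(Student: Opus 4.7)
The plan is to combine the geometric interpretation of $\supp$ with the rigidity established in Proposition \ref{p:gammarigid}. Let $\gamma = (\tH, \Gamma, \lambda)$ and $\gamma' = (\tH', \Gamma', \lambda')$ be genuine regular characters with $\irr(\gamma), \irr(\gamma') \in \caB$ and $\supp(\irr(\gamma)) = \supp(\irr(\gamma'))$. We must show $\gamma \sim \gamma'$.

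I would start by normalizing the Borel. By construction $\supp(\irr(\gamma))$ is the $K_\C$-orbit of the Borel $\mathfrak b$ that contains $\h$ and makes $\lambda$ dominant, and similarly for $\gamma'$. After replacing $\gamma'$ by a $K$-translate we may assume $\mathfrak b = \mathfrak b'$. The Cartan $\h$ is canonically associated to $\mathfrak b$ up to conjugation by $K_\C \cap B$ (as a $\theta$-stable Cartan subalgebra of $\mathfrak b \cap \theta \mathfrak b$), so a further $K$-adjustment lets us arrange $\h = \h'$, and hence $\tH = \tH'$. Finally $\lambda$ and $\lambda'$ are both dominant for the common Borel $\mathfrak b$ and lie in the single $W$-orbit prescribed by the infinitesimal character of $\caB$, so $\lambda = \lambda'$. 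These reductions are simply the standard dictionary of \cite[Proposition 2.2]{ic3} between $K$-orbits of regular characters and $K_\C$-orbits on $\frB$.

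Next I would match the central characters. Block equivalence is generated by the non-vanishing of $\Ext^1$, and if two irreducibles have distinct characters of $Z(\tG)$ then any extension between them splits: a central element separating the two characters acts on the extension with distinct eigenvalues, and its eigenspace for the sub-character gives a $\tG$-stable splitting. Thus all members of $\caB$ share a common central character, so $\Gamma|_{Z(\tG)} = \Gamma'|_{Z(\tG)}$.

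Combining the two steps, $\gamma$ and $\gamma'$ have identical $\tH$, identical $\lambda$, and identical restriction to $Z(\tG)$ of their discrete data. Proposition \ref{p:gammarigid}, which uses the admissibility of $\tG$ and the simply laced assumption, says exactly that these data determine $\Gamma$; hence $\gamma = \gamma'$. The main subtlety is the $K$-normalization in the first step, ensuring that matching supports really lets us identify Cartan and weight simultaneously; once that bookkeeping is in place the proposition is immediate from Proposition \ref{p:gammarigid} and the central-character observation.
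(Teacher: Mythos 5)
Your argument is correct and follows essentially the same route as the paper's own proof: both reduce the statement to Proposition \ref{p:gammarigid} after observing that all members of $\caB$ share the same infinitesimal and central character. You simply make explicit the $K$-normalization via \cite[Proposition 2.2]{ic3} and the block-constancy of the central character, which the paper leaves implicit (along with its remark that $(\tK)_\C$- and $K_\C$-orbits on $\frB$ coincide).
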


\begin{proof}
Note that since $\tK$ is a central extension of $K$, the orbits
of $(\tK)_\C$ and $K_\C$ on $\frB$ coincide.  Since any two representation in
$\caB$ have the same central and infinitesimal character, the
proposition follows from  Proposition \ref{p:gammarigid}.
\end{proof}

\sec{Bigradings}
\label{s:bigradings}
We state versions of some of the results of \cite{ic4} in our setting.
Throughout this section let $G$ be a real reductive linear group and 
$\tG$ an admissible two-fold cover of $G$  (Section
\ref{s:notation} and Definition \ref{d:admissible}).

Fix a $\theta$-stable Cartan subgroup $H$ of $G$. Let
$\Delta = \Deltagh$ and set
\begin{equation}
\label{e:integral}
\Delta(\lambda)=\{\alpha\in\Delta\,|\, \brach\lambda\alpha\in\Z\},
\end{equation}
the set of integral roots defined by $\lambda$.
Define
\begin{equation}
m(\alpha)=
\begin{cases}
  2&\alpha\text{ metaplectic}\\
1&\text{otherwise}.
\end{cases}
\end{equation}
See \cite[Definition 6.5]{rt3}.
Define 
\begin{equation}
\label{e:halfintegral}
\Delta_{\frac12}(\lambda)=\{\alpha\in\Delta\,|\,
\langle\lambda,m(\alpha)\ch\alpha\rangle\in\Z\}.
\end{equation}
A short argument shows that this is always
a $\theta$-stable root system.
Note that if $G$ is simply laced then
$\Delta_{\frac12}(\lambda)=\Delta(2\lambda)$.
This also holds for $G_2$, since in this case all roots are
metaplectic. 
Let $W_{\frac12}(\lambda)=W(\Delta_\frac12(\lambda))$.
\begin{definitionplain}
\label{d:halfint}
We say $\lambda$ is half-integral if $\Delta_{\frac12}(\lambda)=\Delta$.
\end{definitionplain}

Let $\Delta^r$ be the set of real roots, i.e. those roots for which
$\theta(\alpha)=-\alpha$. Let
\begin{equation}
\begin{aligned}
\Delta^r(\lambda)&=\Delta^r\cap \Delta(\lambda)\\
\Delta^r_{\frac12}(\lambda)&=\Delta^r\cap \Delta_{\frac12}(\lambda).
\end{aligned}
\end{equation}
Let $\Delta^i$ be the imaginary roots, and
define $\Delta^i(\lambda)$ and 
$\Delta^i_\frac12(\lambda)$ similarly.

Now fix a genuine regular
character $\gamma=(H,\Gamma,\lambda)$ (Definition \ref{d:reg}). 
For $\alpha\in \Delta_{\frac12}^r(\lambda)$ let
$m_\alpha\in\tG$ be defined as in 
\cite[Section 5]{rt3}.  Thus $m_\alpha$ 
is an inverse image of the corresponding
element of $G$ of \cite[4.3.6]{green}.  Recalling the notion of
metaplectic roots (Definition \ref{d:meta}), 
it is well-known that
\begin{equation}
\label{e:metalpha}
\alpha\text{ is metaplectic  if and only if } m_\alpha \text{
has order 4.}
\end{equation}
We say $\alpha$ satisfies the
parity condition with respect to $\gamma$ if 
\begin{equation}
\label{e:parity}
\text{the eigenvalues of }
\Gamma(m_\alpha)\text{ are of the form }-\epsilon_\alpha e^{\pm\pi i\brach\lambda\alpha}
\end{equation}
where $\epsilon_\alpha=\pm1$ as in \cite[Definition 8.3.11]{green}.
Note that if $\alpha\in\Delta(\lambda)$ then
$e^{\pm\pi i\brach\lambda\alpha}=(-1)^{\brach\lambda\alpha}$, and this definition agrees with
\cite[Definition 8.3.11]{green}.

Let
\begin{subequations}
\label{e:psi}
\renewcommand{\theequation}{\theparentequation)(\alph{equation}}
\begin{align}
\Delta_{\frac12}^{r,+}(\gamma)&=\{\alpha\in\Delta_{\frac12}^r(\lambda)\,|\, \alpha\text{ does not satisfy
  the parity condition}\}
\label{e:psir+}
\\
\Delta_{\frac12}^{r,-}(\gamma)&=\{\alpha\in\Delta_{\frac12}^r(\lambda)\,|\,
\alpha\text{ satisfies the parity condition}\}.
\label{e:psir-}
\end{align}
Also define
\begin{align}
\Delta_{\frac12}^{i,+}(\gamma)&=\{\alpha\in\Delta_{\frac12}^i(\lambda)\,|\, \alpha\text{ is
  compact}\}
\label{e:psii+}
\\
\Delta_{\frac12}^{i,-}(\gamma)&=\{\alpha\in\Delta_{\frac12}^i(\lambda)\,|\, \alpha\text{ is
  noncompact}\}
\label{e:psii-}.
\end{align}
Finally let
\begin{equation}
\begin{aligned}
\label{e:Deltair}
\Delta^{i,\pm}(\gamma)&=\Delta^{i,\pm}_{\frac12}(\gamma)\cap \Delta(\lambda)\\
\Delta^{r,\pm}(\gamma)&=\Delta^{r,\pm}_{\frac12}(\gamma)\cap \Delta(\lambda).\\
\end{aligned}
\end{equation}
These are the usual integral imaginary compact and noncompact roots,
and the integral real roots which satisfy (or do not satisfy) the
parity condition.
\end{subequations}

It is a remarkable fact that 
$\Delta^{i,\pm}_{\frac12}(\gamma)$
and
$\Delta^{r,\pm}_{\frac12}(\gamma)$
 only depend on $\theta$ and $\lambda$, as the next proposition shows.
This is very different from the linear case.



\begin{proposition}[Corollaries 6.9 and 6.10 of {\cite{rt3}}]
\label{p:gradings}
Let $\tG$ be an admissible cover of a simply laced, real reductive
linear group $G$.  Let $\gamma = (H,\Gamma, \lambda)$ be a genuine
regular character for $\tG$. Then
\begin{subequations}
\label{e:gradings}
\renewcommand{\theequation}{\theparentequation)(\alph{equation}}
\begin{align}
\Delta_{\frac12}^{r,+}(\gamma)=\Delta^r(\lambda)\label{e:gradingsa}\\
\Delta_{\frac12}^{i,+}(\gamma)=\Delta^i(\lambda)\label{e:gradingsb}.
\end{align}
\end{subequations}
In other words if $\alpha$ is a real, half-integral root then it fails
to satisfy the parity condition if and only if it is
integral. Similarly an imaginary root  (which is necessarily
half-integral) is compact if and only if it is integral.
\end{proposition}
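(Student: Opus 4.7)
The plan is to translate both statements into a computation of $\Gamma$ on a distinguished order-$2$ or order-$4$ element of $\tH$ attached to $\alpha$, exploiting admissibility together with the simply laced assumption to force the relevant $SL(2)$-subgroups to be metaplectic (or not) exactly when we expect. By Definition \ref{d:admissible} together with the simply laced hypothesis, every real root and every noncompact imaginary root in $\Delta$ is metaplectic, while every compact imaginary root generates a simply connected $SU(2)$ whose preimage in $\tG$ is a trivial cover and is therefore non-metaplectic. This dichotomy is the only nontrivial geometric input.

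For \eqref{e:gradingsa}, fix $\alpha\in\Delta^r_{\frac12}(\lambda)$. Since $\alpha$ is metaplectic, \eqref{e:metalpha} gives that $m_\alpha\in\tG$ has order $4$, so its square is the nontrivial element $z$ of the kernel of $\tG\to G$. Genuineness of $\Gamma$ forces $\Gamma(z)=-I$, hence the eigenvalues of $\Gamma(m_\alpha)$ are $\pm i$. Half-integrality of $\alpha$ means $\brach\lambda\alpha\in\tfrac12\Z$, so $e^{\pi i\brach\lambda\alpha}\in\{\pm1,\pm i\}$, and the parity formula \eqref{e:parity} demands $\{\pm i\}=\{-\epsilon_\alpha e^{\pm\pi i\brach\lambda\alpha}\}$. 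This forces $\brach\lambda\alpha\in\tfrac12+\Z$, i.e., the parity condition holds iff $\alpha\notin\Delta(\lambda)$ and fails iff $\alpha\in\Delta(\lambda)$, which is \eqref{e:gradingsa}.

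For \eqref{e:gradingsb} the argument is parallel, now with the imaginary analogue of $m_\alpha$ in $\wt T$, namely a lift $\sigma_\alpha$ of $\exp(\pi i H_\alpha)\in T$ which has order $4$ when $\alpha$ is noncompact (and hence metaplectic) and order $2$ when $\alpha$ is compact. In the noncompact case $\sigma_\alpha^2=z$ forces $\Gamma(\sigma_\alpha)\in\{\pm i\}$; in the compact case $\Gamma(\sigma_\alpha)\in\{\pm 1\}$. On the other hand $\Gamma(\sigma_\alpha)=\pm e^{\pi i\brach{d\Gamma}\alpha}$, and since both $\brach{\rho_i(\lambda)}\alpha$ and $\brach{2\rho_{i,c}(\lambda)}\alpha$ are integers for $\alpha\in\Delta^i$, the formula \eqref{e:regchar4} reduces this to $\pm e^{\pi i\brach\lambda\alpha}$. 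Matching with $\{\pm i\}$ (noncompact) versus $\{\pm1\}$ (compact) yields: $\alpha$ noncompact iff $\brach\lambda\alpha\in\tfrac12+\Z$, and $\alpha$ compact iff $\brach\lambda\alpha\in\Z$. In particular every imaginary root is automatically half-integral, and \eqref{e:gradingsb} follows.

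The main obstacle is \eqref{e:gradingsb}: one must pin down the precise element $\sigma_\alpha\in\wt T$, verify that its order is as claimed in the two cases (which is where admissibility of the cover is essential), and check that the $\rho_i-2\rho_{i,c}$ twist contributes only a sign that washes out in the modulus-$\pm i$ vs.\ modulus-$\pm 1$ dichotomy. These are precisely the technical points packaged in \cite[Corollaries 6.9 and 6.10]{rt3}. Part \eqref{e:gradingsa} is cleaner because $m_\alpha$ is defined unambiguously in \cite[Section 5]{rt3} and the parity condition is phrased directly in terms of its eigenvalues.
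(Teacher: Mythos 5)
Your proposal is correct and follows essentially the paper's own route: part (a) is verbatim the paper's argument (admissibility plus simply lacedness makes every real half-integral root metaplectic, so $m_\alpha$ has order $4$, genuineness forces eigenvalues $\pm i$ for $\Gamma(m_\alpha)$, and comparison with \eqref{e:parity} gives \eqref{e:gradingsa}), while part (b) repackages Lemma \ref{l:wtexp}(a) — the statement that $\wt{\exp}(2\pi i\ch\alpha)$ is $1$ or $-1$ according to whether $\alpha$ is compact or noncompact, i.e.\ the order of your $\sigma_\alpha=\wt{\exp}(\pi i\ch\alpha)$ — combined with \eqref{e:regchar4} and the integrality of $\langle\rho_i-2\rho_{i,c},\ch\alpha\rangle$. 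The only small imprecision is in the compact imaginary case, where $M_\alpha$ need not be a simply connected $\SU(2)$ with trivial preimage (it may be $\SO(3)$ with preimage containing $\SU(2)$); but the needed conclusion $\sigma_\alpha^2=1$, hence $\Gamma(\sigma_\alpha)=\pm1$, still holds because the preimage is a linear group, which is exactly the reduction made in the paper's proof of Lemma \ref{l:wtexp}.
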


\begin{proof}
Suppose $\alpha\in\Delta_{\frac12}(\lambda)$ is a real root.
According to Definition \ref{d:admissible}, $\alpha$ is metaplectic, so by 
\eqref{e:metalpha} $m_\alpha$ has order $4$.
Therefore, since $\Gamma$ is genuine,
$\Gamma(m_\alpha)$ has eigenvalues $\pm i$. 
On the other hand 
$-\epsilon_\alpha e^{\pi i\brach\lambda\alpha}=\pm 1$ if
$\alpha\in\Delta(\lambda)$, or $\pm i$ if $\alpha\in \Delta_{\frac12}(\lambda)\; \backslash \;
\Delta(\lambda)$. 
Therefore the parity condition \eqref{e:parity} fails if 
$\alpha\in\Delta(\lambda)$, and holds otherwise. This proves (a).
For (b) we may reduce to the case of a simply laced group, in which case
it is an immediate consequence of the following lemma.
\end{proof}

\begin{lemma}
\label{l:wtexp}
Let $\tG$ be an admissible cover of a simply laced, real reductive
linear group $G$.  
Suppose $H$ is a $\theta$-stable Cartan subgroup of $G$, with inverse
image $\tH$, and let $\wt{\exp}:\h_0\rightarrow \tH$ be the exponential map.

\smallskip

\noindent (a) Suppose $\alpha$ is an imaginary root. Then
\begin{equation}
\wt{\exp}(2\pi i\ch\alpha)=
\begin{cases}
1&\alpha\text{ compact}\\  
-1&\alpha\text{ noncompact}.
\end{cases}
\end{equation}

\noindent (b) Let $\alpha$ be a complex root. Then
\begin{equation}
\wt{\exp}(2\pi i(\ch\alpha+\theta\ch\alpha))=
\begin{cases}
1&\langle \alpha,\theta\ch\alpha\rangle=0\\
-1&\langle \alpha,\theta\ch\alpha\rangle\ne 0.
\end{cases}
\end{equation}
\end{lemma}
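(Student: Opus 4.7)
The plan is to reduce both parts to computations in rank-$1$ and rank-$2$ subgroups, using the simply-laced and admissibility hypotheses to pin down how the cover restricts.

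For part (a), the element $\wt\exp(2\pi i\ch\alpha)$ lies in the rank-one subgroup $\wt{M_\alpha}\subset\tG$, so it suffices to analyze the restriction of $\tG \to G$ to $M_\alpha$.  If $\alpha$ is compact imaginary then $M_\alpha\simeq \SU(2)$ is simply connected, so any double cover splits over $M_\alpha$ and $\wt\exp(2\pi i \ch\alpha)=1$.  If $\alpha$ is noncompact imaginary, simple-lacedness makes $\alpha$ long, and admissibility (Definition \ref{d:admissible}) then forces $\wt{M_\alpha}\simeq \wt{\SL}(2,\R)$.  Writing $m_\alpha=\exp(\pi i\ch\alpha)$ for the usual Tits lift, we have $\exp(2\pi i\ch\alpha)=m_\alpha^2$, and by \eqref{e:metalpha} the lift of $m_\alpha$ has order $4$; hence $\wt\exp(2\pi i \ch\alpha)=-1$.

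For part (b), I would break into three cases according to $c:=\langle\alpha,\theta\ch\alpha\rangle\in\{-1,0,1\}$ (the only possibilities in simply-laced type when $\alpha\neq\pm\theta\alpha$).  Case $c=0$: the pair $\{\alpha,\theta\alpha\}$ spans an $A_1\times A_1$, so the complex rank-two subgroup generated by their root spaces is $L_\C\simeq \SL(2,\C)\times \SL(2,\C)$.  Because $\sigma(\alpha)=-\theta\alpha$ (where $\sigma$ is the antiholomorphic involution defining $G$), complex conjugation swaps the two factors, so $L:=L_\C\cap G$ is a single copy of $\SL(2,\C)$ viewed as a real group.  This is homeomorphic to $\SU(2)\times \R^3$ and hence simply connected, so the cover restricts trivially and $\wt\exp(2\pi i(\ch\alpha+\theta\ch\alpha))=1$.

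Case $c=-1$: here $\beta:=\alpha+\theta\alpha$ is a root, is imaginary because $\theta\beta=\beta$, and satisfies $\ch\beta=\ch\alpha+\theta\ch\alpha$ in simply-laced type.  Noncompactness of $\beta$ is automatic from the Lie bracket computation $\theta[X_\alpha,X_{\theta\alpha}]=[X_{\theta\alpha},X_\alpha]=-[X_\alpha,X_{\theta\alpha}]$ (up to nonzero scalar), so part (a) applies and gives $-1$.  Case $c=1$ is the main obstacle: now $\alpha+\theta\alpha$ is not a root (only $\gamma:=\alpha-\theta\alpha$ is, and it is real), so part (a) cannot be invoked directly.  I would instead work in the rank-two real form $L\subset G$ (one checks it is $\SU(2,1)$ on its noncompact Cartan, or $\SL(3,\R)$, depending on the ambient structure) and track the loop $t\mapsto\exp(2\pi i t(\ch\alpha+\theta\ch\alpha))$, showing that its class in $\pi_1(L)$ is the nontrivial element of the kernel $\pi_1(L)/\pi_1(\tL)\simeq\Z/2$ of the admissible cover, so that $\wt\exp(2\pi i(\ch\alpha+\theta\ch\alpha))=-1$.
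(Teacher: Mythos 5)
Your part (a) and the cases $\langle\alpha,\theta\ch\alpha\rangle=0,-1$ of part (b) are essentially the paper's own argument: compact imaginary roots are handled through $M_\alpha$, noncompact imaginary ones through admissibility and \eqref{e:metalpha}, the case $\langle\alpha,\theta\ch\alpha\rangle=-1$ by passing to the noncompact imaginary root $\alpha+\theta\alpha$ (your bracket computation of its noncompactness is a detail the paper leaves implicit), and the orthogonal case through the subgroup locally isomorphic to $\SL(2,\C)$. One small inaccuracy there: the analytic subgroup is only \emph{locally} isomorphic to $\SL(2,\C)$ (it could be an adjoint quotient), so you cannot conclude it is simply connected; the correct statement, as in the paper, is that every cover of it is linear, and in a linear group the element $\exp(2\pi i(\ch\alpha+\theta\ch\alpha))$ is trivial because $\ch\alpha+\theta\ch\alpha$ corresponds to a coroot of the simply connected $\SL(2,\C)$.

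The genuine gap is the case $\langle\alpha,\theta\ch\alpha\rangle=1$, which you yourself call the main obstacle and then do not prove: the assertion that the loop $t\mapsto\exp\bigl(2\pi i t(\ch\alpha+\theta\ch\alpha)\bigr)$ represents the nontrivial coset of $\pi_1(\wt{L})$ in $\pi_1(L)$ is exactly the content of the lemma in this case, and your proposal only states that it should be shown. Two symptoms confirm the reduction was never carried out: first, when $\langle\alpha,\theta\ch\alpha\rangle=1$ the difference $\alpha-\theta\alpha$ is a real root, which forces the rank-two subalgebra to be $\mathfrak{s}\mathfrak{u}(2,1)$ (on the mixed Cartan of $\SL(3,\R)$ the complex roots satisfy $\langle\alpha,\theta\ch\alpha\rangle=-1$, their sum being the imaginary root), so the hedge ``$\SU(2,1)$ or $\SL(3,\R)$'' leaves even the fundamental group ($\Z$ versus $\Z/2$) undetermined and the proposed computation not set up; second, you must invoke admissibility here (the real root $\alpha-\theta\alpha$ is long, hence metaplectic by Definition \ref{d:admissible}) to know the cover does not split over $L$, since otherwise a loop argument could only yield $+1$. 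The paper closes this case with no new topological computation: since $\beta=\alpha-\theta\alpha$ is real and $\langle\beta,\ch\alpha+\theta\ch\alpha\rangle=0$, a Cayley transform through $\beta$ produces a more compact Cartan still containing $\ch\alpha+\theta\ch\alpha$, on which the relevant roots become imaginary and the value $-1$ follows from the cases already established (see Section \ref{s:cayley}). To repair your write-up you should either adopt that Cayley-transform reduction, or actually compute the class of your loop in $\pi_1$ of the group locally isomorphic to $\SU(2,1)$ and reduce it modulo the index-two subgroup defining the admissible cover.
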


\begin{proof}
First assume $\alpha$ is imaginary.  Recall
the group $M_\alpha$ introduced just
before Definition \ref{d:meta}.  The proposition reduces to the
case of $G = M_\alpha$, which is locally isomorphic to
$\SL(2,\R)$ or $\SU(2)$.  If $\alpha$ is compact then $M_\alpha$ and
the identity component of $\wt{M_\alpha}$ are either isomorphic to
$\SU(2)$ or $\SO(3)$. The fact that $\wt{\exp}(2\pi i\ch\alpha)=1$
reduces to the case of a linear group.

If $\alpha$ is
noncompact then $M_\alpha$ is locally isomorphic to $\SL(2,\R)$ and  
by Lemma \ref{l:metaplecticRoots}(1) 
and the admissibility hypothesis
$\wt{M_\alpha}$ is the nonlinear cover $\wt{\SL}(2,\R)$.
It is well-known that
$\wt{\exp}(\pi i\ch\alpha)$ has order $4$ (cf.~\eqref{e:metalpha}),
so 
$\wt{\exp}(2\pi i\ch\alpha)=-1$. 

Now suppose  $\alpha$ is complex.
If $\langle\alpha,\theta(\ch\alpha)\rangle=-1$ then
$\beta=\alpha+\theta(\alpha)$ is a noncompact imaginary root,  
$\ch\beta=\ch\alpha+\theta(\ch\alpha)$, and we are reduced to the
previous case.

If $\langle\alpha,\theta(\ch\alpha)\rangle=1$, then
$\beta=\alpha-\theta(\alpha)$ is a real root. 
By taking a Cayley transform by $\beta$ we obtain a new Cartan
subgroup with an imaginary noncompact root, and again reduce to
the previous case (see Section \ref{s:cayley}). 

Finally suppose $\langle\alpha,\theta(\ch\alpha)\rangle=0$.  
Let $\mathfrak l_\alpha$ be the subalgebra of $\g$ 
generated by root vectors $X_{\pm \alpha},X_{\pm\theta(\alpha)}$.  Let
$L_\alpha$ be the subgroup of $G$ with Lie algebra $\mathfrak
l_\alpha\cap \g_0$.  
The assumption $\langle\alpha,\theta(\ch\alpha)\rangle=0$ implies 
$L_\alpha$ is locally isomorphic to $\SL(2,\C)$, which has no
nontrivial cover.
Then $\wt\exp(2\pi i(\ch\alpha+\theta\ch\alpha)=1$ by the
corresponding fact for linear groups.
\end{proof}

We recall some definitions from \cite[Section 3]{ic4}. Suppose $\Delta$ is
a root system. A {\it grading} of $\Delta$ is a map
$\epsilon:\Delta\rightarrow\pm1$, such that
$\epsilon(\alpha)=\epsilon(-\alpha)$ and
$\epsilon(\alpha+\beta)=\epsilon(\alpha)\epsilon(\beta)$ whenever
$\alpha,\beta$ and $\alpha+\beta\in \Delta$.  A {\it cograding} of $\Delta$ is a
map $\ch\delta:\Delta\rightarrow\pm1$ such that the dual map $\delta:\ch
\Delta\rightarrow\pm1$ (defined by $\delta(\ch\alpha)=\ch\delta(\alpha)$)
is a grading of the dual root system $\ch \Delta$.
We identify a grading $\epsilon$ with its kernel $\epsilon\inv(1)$,
and similarly for a cograding.

A {\it bigrading} of $\Delta$ is a triple $g=(\theta,\epsilon,\ch\delta)$ where
$\theta$ is an involution of $\Delta$, $\epsilon$ is a grading of
$\Delta^i=\Delta^\theta$, and $\ch\delta$ is a cograding of $\Delta^r=\Delta^{-\theta}$.
(This is a {\it weak bigrading} of \cite[Definition 3.22]{ic4}.)
We write $g=(\Delta,\theta,\epsilon,\ch\delta)$ to keep track of $\Delta$.
Alternatively we write $g=(\theta,\Delta^{i,+},\Delta^{r,+})$ where 
$\Delta^{i,+}\subset \Delta^i$ is the kernel of $\epsilon$ and $\Delta^{r,+}\subset
\Delta^r$ is the kernel of $\ch\delta$.
The {\it dual bigrading} of $g$ is the bigrading $\ch
g=(\ch\Delta,-\ch\theta,\delta,\ch\epsilon)$.

\medskip
It is easy to see that $\Delta_{\frac12}^{i,+}(\gamma)$ is a grading of
$\Delta_{\frac12}^i(\lambda)$, and
by \eqref{e:gradingsa}
$\Delta_{\frac12}^{r,+}(\gamma)$ 
is a grading of
$\Delta_{\frac12}^r(\lambda)$.
In the simply laced case gradings and cogradings coincide.
We restrict to this case for the remainder of this section.

\begin{definitionplain}
\label{d:big}
Assume $G$ is simply laced.
The bigrading of $\Delta_{\frac12}(\lambda)$ defined by a genuine
regular character $\gamma$ of $\tG$ is
$$
\ghalf(\gamma)=(\Delta_{\frac12}(\gamma),\theta,\Delta_{\frac12}^{i,+}(\gamma),
\Delta_{\frac12}^{r,+}(\gamma)).
$$
\end{definitionplain}

\smallskip

\noindent 
With the obvious notation
$g(\gamma)=\ghalf(\gamma)\cap \Delta(\lambda)$ is a
bigrading of $\Delta(\lambda)$.

\begin{subequations}
\renewcommand{\theequation}{\theparentequation)(\alph{equation}}  
By Proposition \ref{p:gradings}
\begin{equation}
\label{e:bigrading}
\begin{aligned}
\ghalf(\gamma)&=(\Delta_{\frac12}(\lambda),\theta, \Delta^i(\lambda),\Delta^r(\lambda))\\
g(\gamma)&=(\Delta(\lambda),\theta, \Delta^i(\lambda),\Delta^r(\lambda))\\
\end{aligned}
\end{equation}
Identifying $\Delta$ and $\ch\Delta$, the 
dual bigrading to $\ghalf(\gamma)$ is
\begin{equation}
g^\vee_{\frac12}(\gamma)=(\Delta_{\frac12}(\lambda),-\theta,\Delta^r(\lambda),\Delta^i(\lambda)).
\end{equation}
\end{subequations}




\bigskip

The point of Definition \ref{d:big} is that is contains the information
necessary to prove a duality theorem for simply admissible covers of
simply laced groups.
To the reader familiar with \cite{ic4} (where 
strong bigradings are needed), it
is perhaps surprising that we can get away with so little.  We
offer a few comments explaining this.

Let $G$ be a real reductive linear group.
Suppose $\gamma=(H,\Gamma,\lambda)$ is a regular character of $G$
(Definition \ref{d:reg}).
Write $H=TA$ as usual, and let $M$ be the centralizer of $A$ in $G$.
Section 4 of \cite{ic4} associates a strong bigrading of the
integral root system $\Delta(\lam)$ to $\gamma$.
One component of the strong bigrading is
the imaginary cross-stabilizer $W^i(\gamma)$ (see Section
\ref{s:ca}). This group (denoted $W^i_1(\gamma)$ in \cite{ic4}) 
satisfies the following containment relations:
\begin{subequations}
\renewcommand{\theequation}{\theparentequation)(\alph{equation}}
\label{e:strongbig}
\begin{equation}
\label{e:strongbig1}
W(\Delta^{i,+}(\lam))\subset W^i(\gamma) \subset
\text{Norm}_{W(\Delta^i(\lam))}(\Delta^{i,+}(\lam)).
\end{equation}
The outer terms in \eqref{e:strongbig1} depend only on Lie algebra
data, while $W^i(\gamma)$ depends
on the disconnectedness of $G$.

Dually, a strong bigrading also includes the real cross stabilizer $W^r(\gamma)$
satisfying
\begin{equation}
\label{e:strongbig2}
W(\Delta^{r,+}(\gamma))\subset W^r(\gamma)\subset
\Norm_{W(\Delta^r(\lambda))}(\Delta^{r,+}(\lambda)).
\end{equation}
\end{subequations}

Now suppose  $\gamma=(H,\Gamma,\lambda)$ is a  genuine regular character
of an admissible cover $\tG$ of $G$, with $G$ simply laced.
Then Proposition \ref{p:gradings} implies that the outer
two terms in \eqref{e:strongbig1} and (b) are the same,
so the middle terms are  determined:
\begin{equation}
\label{e:w1s}
W^i(\gamma) = W(\Delta^i(\lam)), \quad
W^r(\gamma) = W(\Delta^r(\lam)).
\end{equation}
Thus the (weak) bigrading determines the strong bigrading in this setting.
Furthermore, the   only information contained in
$g_{\frac12}(\gamma)$  not already in $g(\gamma)$ (see \eqref{e:bigrading}) 
is the action of $\theta$ on the roots which are half-integral but not
integral.  See the proof of Proposition \ref{p:crossstab}.

\sec{Cross Action and Cayley Transforms}
\label{s:ca}

We start by defining the cross action of the Weyl group on genuine
regular characters.
Throughout this section let $G$ be a real reductive linear group and 
$\tG$ an admissible two-fold cover of $G$  (Section
\ref{s:notation} and Definition \ref{d:admissible}).
Starting in Section \ref{s:cross} we assume
that $G$ is simply laced.

\subsection{The family $\caF$}
\label{s:F}

Fix a Cartan subalgebra $\h$ of $\g$ and let $\Delta=\Delta(\g,\h)$,
$W=W(\Delta)$ and let $R$ be the root lattice. Fix a set $\Delta^+$ of positive roots.
As in
Section \ref{s:regularCharacters} fix an infinitesimal character $I$,
which determines a dominant element $\lambda_\o\in\h^*$.
Recall (as in \eqref{e:integral})  $\Delta(\lambda_0)=\{\alpha\,|\,
\langle\lambda_0,\ch\alpha\rangle\in\Z\}$, and let
$W(\lambda_0)=W(\Delta(\lambda_0))=\{w\,|\, w\lambda_0-\lambda_0\in R\}$.

The map $w\rightarrow w\lambda_0$ induces
an isomorphism
\begin{equation}
\label{e:isomorphism}
W/W(\lambda_0)\simeq(W\lambda_0+R)/R.
\end{equation}
Let $\caF_0$ be a set of representatives of $(W\lambda_0+R)/R$ consisting
of dominant elements. For any $w\in W$ let $\lambda_w$ be
the element of $\caF_0$ corresponding to $w$ under the isomorphism
\eqref{e:isomorphism}. Thus 
\begin{subequations}
\label{e:props}
\renewcommand{\theequation}{\theparentequation)(\alph{equation}}  
\begin{align}
\lambda_w&\text{ is $\Delta^+$-dominant},\\
\lambda_x=\lambda_y&\text{ if and only if }y=xw\text{ with }w\in W(\lambda_0),\\
\lambda_w-w\lambda_0&\in R.
\end{align}
\end{subequations}
Let $\caF=W\caF_0$. Note that $|\caF|=|W||W/W(\lambda_0)|$.

\begin{definitionplain}
\label{d:Waction}
For $\lambda\in\caF$ and $w\in W$ define $w\cdot\lambda$ to be the
unique element of $\caF$ satisfying: $w\cdot\lambda$ is in the same
Weyl chamber as $w\lambda$, and $w\cdot\lambda-\lambda\in R$.
\end{definitionplain}

\begin{lemma}
\label{l:crossF}
This is well-defined, and is an action of $W$ on $\caF$.  
Furthermore for any $\lambda\in \caF$, $w\in W$,
 $w\cdot\lambda=w\lambda$ if and only
if $w\in W(\lambda)$.
\end{lemma}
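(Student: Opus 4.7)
The plan is to construct $w \cdot \lambda$ explicitly using the decomposition $\lambda = x\lambda_y$ with $x \in W$, $\lambda_y \in \caF_0$, verify the two defining properties, and then deduce the group action axioms and the stabilizer statement from uniqueness.

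I first observe that the cardinality equality $|\caF| = |W| \cdot |W/W(\lambda_0)|$ forces each $\lambda_y \in \caF_0$ to be regular, since distinct dominant elements lie in distinct $W$-orbits (by simple transitivity of $W$ on Weyl chambers) and so $|\caF| = \sum_{\lambda_y \in \caF_0} |W|/|\mathrm{Stab}_W(\lambda_y)|$. Consequently every $\lambda \in \caF$ admits a unique decomposition $\lambda = x \lambda_y$: the Weyl chamber containing $\lambda$ determines $x$, and then $x^{-1}\lambda \in \caF_0$ is forced. Now $w\lambda = (wx)\lambda_y$ lies in the chamber obtained from the dominant chamber by $wx$, and any element of $\caF$ in this chamber takes the form $(wx)\lambda_{y'}$ for some $\lambda_{y'} \in \caF_0$. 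The condition $\mu - \lambda \in R$ becomes, after applying $x^{-1}$ and using $x^{-1}R = R$, the congruence $\lambda_{y'} \equiv (x^{-1}w^{-1}x)\lambda_y \pmod R$; the right-hand side lies in $W\lambda_0 + R$, and $\caF_0$ contains exactly one representative of that $R$-coset, which uniquely determines $\lambda_{y'}$ and hence $\mu = w \cdot \lambda$.

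For the group action axioms, $1 \cdot \lambda = \lambda$ is immediate because $\lambda$ itself satisfies both defining conditions. For associativity, set $\nu = w_2 \cdot \lambda$ and $\mu = w_1 \cdot \nu$; since $\nu$ and $w_2\lambda$ lie in the same chamber, applying $w_1$ shows $w_1\nu$ and $w_1 w_2 \lambda$ lie in the same chamber, and $\mu$ lies in the chamber of $w_1\nu$, so $\mu$ lies in the chamber of $w_1 w_2 \lambda$. Additivity of the congruences yields $\mu - \lambda = (\mu - \nu) + (\nu - \lambda) \in R$. Uniqueness then forces $\mu = (w_1 w_2) \cdot \lambda$.

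Finally, using the standard identification $W(\lambda) = W(\Delta(\lambda)) = \{w \in W : w\lambda - \lambda \in R\}$ for regular $\lambda$, the biconditional $w \cdot \lambda = w\lambda \Leftrightarrow w \in W(\lambda)$ is immediate: in one direction, the defining property of $w \cdot \lambda$ forces $w\cdot\lambda - \lambda \in R$, hence if $w\cdot\lambda = w\lambda$ then $w \in W(\lambda)$; in the other direction, if $w\lambda - \lambda \in R$, then $w\lambda$ itself satisfies both defining conditions and uniqueness gives $w \cdot \lambda = w\lambda$. The only subtle step in the whole argument is establishing regularity of the elements of $\caF_0$; everything else is a formal consequence of the definitions of $\caF$ and $\caF_0$.
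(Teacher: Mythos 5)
Your proof is correct and follows essentially the same route as the paper's: your identification of $w\cdot\lambda$ as $wx\lambda_{y'}$, where $\lambda_{y'}$ is the unique representative in $\caF_0$ of the $R$-coset of $x^{-1}w^{-1}x\lambda_y$, is exactly the paper's explicit formula $w\cdot(x\lambda_y)=wx\lambda_{x^{-1}w^{-1}xy}$. The only difference is that you write out the steps the paper declares immediate (uniqueness, the action axioms, the stabilizer statement), including making explicit that the elements of $\caF_0$ are regular --- a point the paper leaves implicit in the count $|\caF|=|W|\,|W/W(\lambda_0)|$ and which does underlie the uniqueness of the chamber decomposition.
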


\begin{proof}
This follows readily from an abstract argument, but perhaps it is
useful to make it explicit.

Given $\lambda$ choose $x,y\in W$ so that $\lambda=x\lambda_y$. We
claim for any $w\in W$
\begin{equation}
w\cdot(x\lambda_y)=wx\lambda_{x\inv w\inv xy}.
\end{equation}
Let $v=x\inv w\inv xy$. 
It is clear that $wx\lambda_v$ is in the same Weyl chamber as
$w\cdot(x\lambda_y)$, i.e. the Weyl chamber of $wx\lambda_y$.
We also have to show $wx\lambda_v-w\cdot(x\lambda_y)\in R$, which
amounts to
\begin{equation}
wx\lambda_v-x\lambda_y\in R.
\end{equation}
By \eqref{e:props}(c) write 
$\lambda_v=v\lambda_0+\tau$ and $\lambda_y=y\lambda_0+\mu$ with
$\tau,\mu\in R$.
We have to show
\begin{equation}
wx(v\lambda_0+\tau)-x(y\lambda_0+\mu)\in R.
\end{equation}
This is clear since $wxv=xy$ and $R$ is $W$-invariant.

Uniqueness is immediate, and the fact that this is an action is
straightforward.  The final assertion is also clear.
\end{proof}

\subsection{Cross Action}
\label{s:cross}

For the remainder of  section \ref{s:ca} we assume $G$ is simply
laced.

Fix a family $\caF$ as in the preceding section.
Now suppose $H$ is a Cartan subgroup of $\tG$ with complexified Lie
algebra $\h$, and suppose $\gamma=(H,\Gamma,\lambda)$ is a genuine regular character
with $\lambda\in\caF$.   Recall (Section \ref{s:regularCharacters})
the central character of $\gamma$ is
the restriction of $\Gamma$ to $Z(\tG)$.

\begin{definitionplain}
\label{d:cross}
For $w\in W$ define 
\begin{equation}
w\times\gamma=(H,\Gamma',w\cdot\lambda)
\end{equation}
where  $(H,\Gamma',w\cdot\lambda)$ is the unique genuine regular
character of this form with the same central character as that of
$\gamma$. 
\end{definitionplain}

\begin{lemma}
\label{l:cross}
The regular character $(H,\Gamma',w\cdot\lambda)$ of Definition
\ref{d:cross}
exists and is unique. This defines an action of $W$ on the set of
genuine regular characters $(H,\Gamma,\lambda)$ with $\lambda\in\caF$. 
Finally $\gamma$ and $w\times\gamma$ have the same infinitesimal
character if and only if $w\in W(\lambda)$, in which case 
Definition \ref{d:cross} agrees with  \cite[Definition 8.3.1]{green}.
\end{lemma}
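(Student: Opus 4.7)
The plan is to proceed in three steps: existence and uniqueness of $(H,\Gamma',w\cdot\lambda)$, verification of the action axioms, and the comparison with \cite[Definition 8.3.1]{green}.

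For existence and uniqueness, the key tool is Proposition \ref{p:gammarigid}: a genuine regular character on the fixed Cartan $H$ is determined by its infinitesimal character together with its central character. Uniqueness of $w\times\gamma$ is therefore immediate once the central character (that of $\gamma$) and infinitesimal character ($w\cdot\lambda$) are prescribed. For existence I would construct $\Gamma'$ as a genuine character of $Z(\tH)=Z(\tG)\wt{H^0}$ (using Proposition \ref{p:ZH}) by declaring $\Gamma'|_{Z(\tG)}=\Gamma|_{Z(\tG)}$ and fixing the differential
\[
d\Gamma' = w\cdot\lambda + \rho_i(w\cdot\lambda) - 2\rho_{i,c}(w\cdot\lambda)
\]
as required by \eqref{e:regchar4}. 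The one thing to check is that these two pieces of data agree on the intersection $Z(\tG)\cap\wt{H^0}$, which reduces to verifying that $d\Gamma - d\Gamma'$ integrates to the trivial character on this group. Writing
\[
d\Gamma - d\Gamma' = (\lambda - w\cdot\lambda) + \bigl(\rho_i(\lambda)-\rho_i(w\cdot\lambda)\bigr) - 2\bigl(\rho_{i,c}(\lambda)-\rho_{i,c}(w\cdot\lambda)\bigr),
\]
the first term lies in the root lattice $R$ by Definition \ref{d:Waction}, and each of the remaining two terms is an integer combination of imaginary roots, since replacing $\lambda$ by $w\cdot\lambda$ flips each half-summand in $\rho_i$ and $\rho_{i,c}$ by an integer amount. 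Thus $d\Gamma-d\Gamma' \in R$, and since elements of $R$ exponentiate trivially on $Z(\tG)\cap\wt{H^0}$, the compatibility holds. This compatibility check is the main technical obstacle.

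The action property is then formal: $e\times\gamma=\gamma$ is immediate, and for the equality $(wv)\times\gamma=w\times(v\times\gamma)$, both sides are genuine regular characters supported on $H$ with the same central character as $\gamma$ and with $\lambda$-component $(wv)\cdot\lambda=w\cdot(v\cdot\lambda)$ by Lemma \ref{l:crossF}; by the uniqueness just established, they coincide.

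Finally, $\gamma$ and $w\times\gamma$ have the same infinitesimal character iff $w\cdot\lambda\in W\lambda$. Since $w\cdot\lambda$ lies in the Weyl chamber $w\mathcal{C}$ (where $\mathcal{C}$ is the chamber of $\lambda$) and $\lambda$ is regular, any element of $W\lambda$ lying in $w\mathcal{C}$ must equal $w\lambda$; hence the condition becomes $w\cdot\lambda=w\lambda$, equivalent to $w\in W(\lambda)$ by Lemma \ref{l:crossF}. In that case $w\times\gamma$ and the character produced by \cite[Definition 8.3.1]{green} are both genuine regular characters on $H$ with $\lambda$-component $w\lambda$ and central character equal to that of $\gamma$, so they agree by a final appeal to Proposition \ref{p:gammarigid}.
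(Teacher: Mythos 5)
Your proposal is correct and follows essentially the same route as the paper: uniqueness (and the comparison with \cite[Definition 8.3.1]{green}) via Proposition \ref{p:gammarigid}, the final infinitesimal-character statement via Lemma \ref{l:crossF}, and existence by observing that the required shift $(w\cdot\lambda-\lambda)+(\rho_i(w\cdot\lambda)-\rho_i(\lambda))-2(\rho_{i,c}(w\cdot\lambda)-2\rho_{i,c}(\lambda))$-type term lies in the root lattice and hence is invisible to $Z(\tG)$. The only (cosmetic) difference is that the paper realizes $\Gamma'$ directly as $\Gamma\otimes e^{\tau}$ for the root-lattice character $\tau$, whereas you reassemble $\Gamma'$ from its restriction to $Z(\tG)$ and its differential via Proposition \ref{p:ZH} and check compatibility on $Z(\tG)\cap\wt{H^0}$ --- the same underlying fact.
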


Note that (for $w\not\in W(\lambda)$) the definition of
$w\times\gamma$ depends on the choice of $\caF$.

\begin{proof}
Uniqueness is immediate from Proposition
\ref{p:gammarigid}.
For existence, let 
\begin{equation}
\label{e:tau}
\tau=(w\cdot\lambda-\lambda)+(\rho_i(w\lambda)-\rho_i(\lambda))-
(2\rho_{i,c}(w\lambda)-2\rho_{i,c}(\lambda))
\end{equation}
(see Definition \ref{d:reg}).
By Definition \ref{d:Waction}, $\tau$ is in the root lattice, so
we may view it as a character of $H$ which is trivial on  $Z(\tG)$.
An easy calculation shows that $(H,\Gamma\otimes\tau,w\cdot\lambda)$
is a genuine regular character with the same central character as
$\gamma$.   This gives existence.

If $w\not\in W(\lambda)$ then the infinitesimal characters of $\gamma$
and $w\cdot\gamma$ are different by the final assertion of Lemma \ref{l:crossF}.
If $w\in W(\lambda)$ then $w\times\gamma$ as defined in \cite[Definition 8.3.1]{green} 
is of the form $(H,*,w\lambda)$. 
By Lemma \ref{l:crossF}  $w\cdot\lambda=w\lambda$, so our definition of
$w\times\gamma$ is also of this form. The final assertion of the
lemma follows from
Proposition \ref{p:gammarigid} (or a direct comparison of the two
definitions). 
\end{proof}

\subsection{Cayley Transforms}
\label{s:cayley}

We continue to  assume $G$ is simply laced.

Suppose $H$ is a $\theta$-stable Cartan subgroup of $G$ and $\alpha$ is a real
root. Define the Cayley transform $H_\alpha=c_\alpha(H)$
as in \cite[\S 11.15]{overview}. 
In particular $\ker(\alpha)=\h_\alpha\cap \h$ is of
codimension one in $\h$ and $\h_\alpha$.
We say a  root $\beta$ of $H_\alpha$ is a Cayley transform of $\alpha$
if $\ker(\beta)=\h\cap\h_\alpha$; there are two such roots
$\pm\beta$, which are necessarily noncompact imaginary.

For $\lambda\in\h^*$ and $\alpha,\beta$ as above define
$c_{\alpha,\beta}(\lambda)\in\h^*_{\alpha}$:
\begin{subequations}
\renewcommand{\theequation}{\theparentequation)(\alph{equation}}  
\begin{align}
\label{e:ctdifferential}
c_{\alpha,\beta}(\lambda)|_{\h\cap\h_\alpha}
&=\lambda|_{\h\cap\h_\alpha}\\
\langle c_{\alpha,\beta}(\lambda),\ch\beta\rangle&=
\langle\lambda,\ch\alpha\rangle.
\end{align}
If $\alpha$ is a noncompact imaginary  root  define $H^\alpha$ and
$c^{\alpha,\beta}$ similarly.
It is clear that
$c^{\alpha,-\beta}(\lambda)=s_\beta c^{\alpha,\beta}(\lambda)$,
$c_{\alpha,-\beta}(\lambda)=s_\beta c_{\alpha,\beta}(\lambda)$,
and
$c^{\beta,\alpha}c_{\alpha,\beta}(\lambda)=
c_{\beta,\alpha}c^{\alpha,\beta}(\lambda)=\lambda$. 
\end{subequations}

If $\tH$ is the preimage of $H$ in $\tG$ 
then $c^\alpha(\tH)$ and
$c_\alpha(\tH)$ are defined to be the inverse images of the
corresponding Cartan subgroups of $G$.

\begin{definitionplain}
\label{d:precayley}
Let $\gamma=(\tH,\Gamma,\lambda)$ be a genuine regular character.
Suppose  $\alpha$ is a
noncompact imaginary root. Let $\tH^\alpha=c^\alpha(\tH)$ and 
suppose $\beta$ is a Cayley
transform of $\alpha$. 
Then the  Cayley transform $c^{\alpha,\beta}(\gamma)$ of $\gamma$ with respect to $\alpha$ and $\beta$
is any regular character of the form
$(\tH^\alpha,\Gamma',c^{\alpha,\beta}(\lambda))$
with the same central character as $\gamma$. 
(Existence and uniqueness are addressed in Proposition 
\ref{p:cayley}.)
If $\alpha$ is a real root 
$c_{\alpha,\beta}(\gamma)$ is defined similarly.
\end{definitionplain}

Define real roots of type I
and II as in \cite[Definition 8.3.8]{ic4}.

\begin{proposition}
\label{p:cayley}
In the setting of Definition \ref{d:precayley},
suppose $\alpha$ is a noncompact
imaginary root and $\beta$ is a Cayley transform of $\alpha$.
Then $c^{\alpha,\beta}(\gamma)$ exists and is unique.
In this case $s_\beta\in W(G,H^\alpha)$ and
$c^{\alpha,-\beta}(\gamma)=s_\beta c^{\alpha,\beta}(\gamma)$. 

Suppose  $\alpha$ is real and $\beta$ is a Cayley transform of $\alpha$.
If $\langle\lambda,\ch\alpha\rangle\not\in\Z+\frac12$ then
$c_{\alpha,\beta}(\gamma)$ does not exist.

Assume $\langle\lambda,\ch\alpha\rangle\in\Z+\frac12$.
If $\alpha$ is  type II  then
$c_{\alpha,\beta}(\gamma)$ exists and is unique,
$s_\beta\in W(G,H_\alpha)$, and 
$c_{\alpha,-\beta}(\gamma)=s_\beta c_{\alpha,\beta}(\gamma)$. 
If  $\alpha$ is  type I, then precisely one of
$c_{\alpha,\pm\beta}(\gamma)$ exists, and is unique.
\end{proposition}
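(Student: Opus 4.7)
The plan is to reduce every assertion to Proposition \ref{p:gammarigid} (rigidity) and Proposition \ref{p:ZH} (the structure of the center of a Cartan in an admissible cover), with Lemma \ref{l:wtexp} supplying the obstruction in the real-root case. Uniqueness in all cases is automatic: a Cayley transform by definition fixes the $\lambda$-parameter (either $c^{\alpha,\beta}(\lambda)$ or $c_{\alpha,\beta}(\lambda)$) and requires the restriction to $Z(\tG)$ to equal $\Gamma|_{Z(\tG)}$, and by Proposition \ref{p:gammarigid} these two pieces of data determine a genuine regular character. So in each case it suffices to establish existence and the stated relations.

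For the noncompact imaginary case, Proposition \ref{p:ZH} says that a genuine character of $Z(\wt{H^\alpha}) = Z(\tG)\wt{(H^\alpha)^0}$ is determined by (and exists for) any compatible pair of a genuine character on $Z(\tG)$ and a prescribed differential on $\h^\alpha$, where compatibility means agreement on $Z(\tG) \cap \wt{(H^\alpha)^0}$. The new differential $c^{\alpha,\beta}(\lambda) + \rho_i - 2\rho_{i,c}$ agrees with $d\Gamma$ on $\h \cap \h^\alpha$ (by \eqref{e:ctdifferential}) up to a root-lattice correction, which restricts trivially to $Z(\tG)$, so the compatibility for the Cayley transform inherits directly from that of $\gamma$. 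A character $\Gamma'$ on $\tH^\alpha$ of the correct dimension is then produced by Lemma \ref{l:repT}. The fact that $s_\beta \in W(G, H^\alpha)$ for the real root $\beta$ is standard (it is realized by an element of the normalizer coming from an $\mathrm{SL}(2,\R)$-triple), and the identity $c^{\alpha,-\beta}(\gamma) = s_\beta c^{\alpha,\beta}(\gamma)$ then follows from the corresponding identity on $\lambda$-parameters together with uniqueness.

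The real case is the main point. Suppose a genuine character $\Gamma'$ on $\tH_\alpha$ exists with differential $d\Gamma' = c_{\alpha,\beta}(\lambda) + \rho_i - 2\rho_{i,c}$. Apply Lemma \ref{l:wtexp}(a) to the noncompact imaginary root $\beta$ of $\tH_\alpha$: $\wt\exp(2\pi i \ch\beta) = -1 \in Z(\tG)$, so genuineness forces $\Gamma'(\wt\exp(2\pi i \ch\beta)) = -1$. Evaluating via the differential instead gives $\exp(2\pi i \langle d\Gamma', \ch\beta\rangle) = -1$, and a brief check in the simply laced setting shows that $\langle \rho_i - 2\rho_{i,c}, \ch\beta\rangle$ has the parity needed so that this is equivalent to $\langle\lambda,\ch\alpha\rangle = \langle c_{\alpha,\beta}(\lambda), \ch\beta\rangle \in \Z + \tfrac12$. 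This proves necessity. Conversely, when this half-integrality holds, the compatibility of $\Gamma|_{Z(\tG)}$ with the prescribed differential on the intersection is exactly what this calculation shows, and Proposition \ref{p:ZH} produces the genuine character $\Gamma'$ on $Z(\wt{H_\alpha})$; Lemma \ref{l:repT} extends it uniquely to $\tH_\alpha$.

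Finally, the type I/II dichotomy comes from comparing the two sign choices $\pm\beta$: the candidate $\lambda$-parameters $c_{\alpha,\pm\beta}(\lambda)$ are $s_\beta$-related. In type II, $s_\beta \in W(G, H_\alpha)$, so both transforms exist and the regular character $c_{\alpha,-\beta}(\gamma)$ is obtained from $c_{\alpha,\beta}(\gamma)$ by cross-action with $s_\beta$, yielding the stated relation. In type I, $s_\beta \notin W(G, H_\alpha)$, and the two sign choices produce central-character data on $Z(\tG)$ differing by a nontrivial sign traceable to how the metaplectic element $m_\alpha \in \tH$ transports into $\tH_\alpha$; exactly one of the two matches $\Gamma|_{Z(\tG)}$, so exactly one of $c_{\alpha,\pm\beta}(\gamma)$ exists. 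I expect this final step—pinning down the precise element of $Z(\tG) \cap \wt{(H_\alpha)^0}$ that separates the two sign choices and matching it against $\Gamma|_{Z(\tG)}$—to be the main technical obstacle, since it requires careful bookkeeping for the metaplectic lifts $m_\alpha$, $m_\beta$ already introduced via \eqref{e:metalpha}.
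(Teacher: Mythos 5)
Your overall strategy (rigidity for uniqueness, the order-four element $m_\alpha$ and genuineness for the half-integrality obstruction, the value at $m_\alpha$ for the type I/II dichotomy) matches the paper's, and your uniqueness and necessity arguments are sound. The existence half, however, has a genuine gap: you use Proposition \ref{p:ZH} as if it \emph{produced} characters, but it is only a determination statement ($Z(\tH)=Z(\tG)\wt{H^0}$, and a genuine character of $Z(\tH)$ is determined by its restriction to $Z(\tG)$ and its differential). To construct $\Gamma'$ you must show that a genuine character of $Z(\tH^\alpha)$ (resp.\ $Z(\tH_\alpha)$) exists with the prescribed restriction to $Z(\tG)$ and the differential forced by \eqref{e:regchar4} for the \emph{new} Cartan; this requires consistency at all torsion points of $\wt{(H^\alpha)^0}$, not just along the one-parameter subgroup through $\ch\beta$, and it also requires verifying that the resulting triple really satisfies \eqref{e:regchar4}, where $\rho_i$ and $\rho_{i,c}$ are taken for the new imaginary root system with its own compact/noncompact grading (so the restriction of $d\Gamma'$ to $\h\cap\h^\alpha$ does not match $d\Gamma$ in any obvious way). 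The paper does not get this for free either: it first reduces, via the cross action and the family $\caF$, to the case where the relevant root is imaginary-simple, and only then invokes \cite[Section 4]{ic1}, \cite[Lemma 4.32]{aherb} and \cite[Proposition 5.3.4]{green} to construct $\Gamma'$ and check the regular-character conditions. Your proposal has no substitute for this reduction or for those verifications, so existence in the imaginary case, and the sufficiency direction in the real case (including type II), is asserted rather than proved.

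In the type I case you explicitly leave the decisive step open, and your framing of it is off: by Definition \ref{d:precayley} both candidates $c_{\alpha,\pm\beta}(\gamma)$ are required to have the \emph{same} restriction to $Z(\tG)$ as $\gamma$, so the dichotomy is not a sign discrepancy in central characters on $Z(\tG)$. The relevant element is $m=m_\alpha=\wt{\exp}(\pi i\ch\beta)$, which in type I is subject to two constraints simultaneously: its value is inherited from $\Gamma$ (hence equals $\pm i$, since $m^2=-1$ and $\Gamma$ is genuine), and it is computed from the differential side of \eqref{e:realcayley} as $\exp(\pi i(\eta+d))$ with $\eta=\langle\lambda,\ch\alpha\rangle$ and $d\in\Z$. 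Replacing $\beta$ by $-\beta$ leaves the differential-side value unchanged but replaces $m$ by $m\inv$, hence negates the inherited value; since both values lie in $\{\pm i\}$, exactly one sign choice is consistent. That is the paper's argument, and it is precisely the bookkeeping your write-up still needs to supply.
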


\begin{proof}
If the indicated Cayley transforms exist, they are unique by
Proposition \ref{p:gammarigid}.  Existence is more subtle, however.
As in Section \ref{s:regularCharacters} let $\Delta^i$ be the set of
all (not necessarily integral) roots.  We say an imaginary root
$\alpha$ is {\it imaginary-simple} (with respect to $\lambda$) if it
is simple for
$\{\alpha\in\Delta^i\,|\,\langle\lambda,\ch\alpha\rangle>0\}$.

First assume $\alpha$ is noncompact imaginary and imaginary-simple.
Then the existence of $c^{\alpha,\beta}(\gamma)$ is given
by \cite[Section 4]{ic1}.  We can be more explicit
in our setting as follows.  Write
$c^{\alpha,\beta}(\gamma)=(\tH^\alpha,\Gamma',c^{\alpha,\beta}(\lambda))$.
By Lemma \ref{l:repT}, it is enough to describe
the restriction of $\Gamma'$ to $Z(\tH)$.  Set
\begin{subequations}
\renewcommand{\theequation}{\theparentequation)(\alph{equation}}  
\begin{align}
\Gamma'(g)&=\Gamma(g)\quad (g\in Z(\tH)\cap Z(\tH^\alpha))\\
\langle d\Gamma',\ch\beta\rangle
&=
\langle \lambda,\ch\alpha\rangle.
\end{align}
\end{subequations}
The first condition amounts to specifying $\Gamma'$ on $Z(\tG)$ and
$d\Gamma'|_{\h\cap\h^\alpha}$. 
It is straightforward to see that such $\Gamma'$ exists; see 
\cite[Lemma 4.32]{aherb}. To show that
$(\tH^\alpha,\Gamma',c^{\alpha,\beta}(\lambda))$ is a regular
character we invoke that $\alpha$ is imaginary-simple and apply
\cite[Proposition 5.3.4]{green}.

Now suppose $\alpha$ not necessarily imaginary-simple.
Choose
 $w\in W(\g,\h)$ making $\alpha$ imaginary-simple.
Then $w\inv\times\gamma$
is defined (Section \ref{s:cayley}) and is of the
form $(\tH,*,w\inv\cdot\lambda)$, where $*$ represents some
representation we need not
specify. By definition $w\inv \cdot\lambda$
is in the same Weyl chamber as $w\inv\lambda$ so $\alpha$ is imaginary-simple
with respect to $w\lambda$.
Then, letting $c=c^{\alpha,\beta}$,
$c(w\inv\times\gamma)$ is defined, and is of the form
$(\tH^\alpha,*,c(w\inv\cdot\lambda))$.
It is clear that $\wt wc(w\inv\cdot\lambda)=c(\lambda)$ for
some $\wt w\in W(\g,\h^\alpha)$. 
Let $\caF^\alpha=c\caF$ use it to define the cross action for $H^\alpha$.
It is clear that
$c(w\inv\cdot\lambda)$ and $c(\lambda)$ differ by a sum
of roots, and $\wt w\cdot c(w\inv\cdot\lambda)=c(\lambda)$. 
Consider
\begin{equation}
\label{e:calphabeta}
\wt w\times c(w\inv\times\gamma).
\end{equation}
This is a regular character of the form
$(\tH,*,c(\lambda))$, and has the same central character
as $\gamma$.  This establishes existence in this case.

Now suppose $\alpha$ is real.
As in the preceding case, using the cross action we may assume
$\beta$ is imaginary-simple. 
in which case $\Gamma'$ is constructed in
\cite[Section 4]{ic1}, although less explicitly. In this case
$\Gamma'$ must satisfy
\begin{subequations}
\label{e:realcayley}
\renewcommand{\theequation}{\theparentequation)(\alph{equation}}  
\begin{align}
\Gamma'(g)&=\Gamma(g)\quad (g\in Z(\tH)\cap Z(\tH_\alpha))\\
\langle d\Gamma',\ch\beta\rangle
&=
\langle \lambda,\ch\alpha\rangle+\langle\rho_i(c(\lambda))-2\rho_{i,c}(c(\lambda)),\ch\beta\rangle
\end{align}
with notation as in \eqref{e:regchar2}, 
and with $c(\lambda)=c_{\alpha,\beta}(\lambda)$.
See \cite[Definition 8.3.14]{green}.
Let  $\eta=\langle\lambda,\ch\alpha\rangle$ and 
$d=\langle\rho_i(c(\lambda))-2\rho_{i,c}(c(\lambda),\ch\beta\rangle\in\Z$.

Let $m=\wt\exp(\pi i\ch\beta)$ where $\wt\exp:\h_0\rightarrow \tH$ is
the exponential map. (This is the element $m_\alpha$ of
\eqref{e:metalpha}). 
By Lemma \ref{l:wtexp} $m^2=-1$, so $\Gamma(m)=\pm i$ since $\Gamma$
is genuine. 
On the other hand \eqref{e:realcayley}(b) gives
\begin{equation}
\Gamma'(m)=\exp(\pi i(\eta+d)).
\end{equation}
\end{subequations}
Since  $d\in \Z$ this forces
$\eta\in \Z+\frac12$, so assume this holds.

In our setting there is no further condition if $\wt{m}$ is not
contained in $Z(\tH)$, i.e. if $\alpha$ is type II. 
If $\alpha$ is of type I then $\Gamma'(m)$ is given by both 
(a) and (b).
Changing $\beta$ to $-\beta$ does not change  the right hand side of
(c), but replaces $m$ with $m\inv$ and therefore
$\Gamma'(m)$ with $-\Gamma'(m)$.
Therefore
(c) holds for precisely one choice of
$\beta$ or $-\beta$.
\end{proof}


The proposition shows that, unlike the linear case, $c^{\alpha,\beta}(\gamma)$
is never multivalued.  Like the linear case, the choice of $\beta$
can only affect the Cayley transform up to $K$-conjugacy.

\begin{definitionplain}
\label{d:precayley2}
Fix a genuine regular character
 $\gamma=(\tH,\Gamma,\lambda)$, and 
suppose $\alpha\in \Delta_{\frac12}^{i,-}(\gamma)$.
Choose a Cayley transform  $\beta$ of $\alpha$ and let
$c^\alpha(\gamma)$ denote $c^{\alpha,\beta}(\gamma)$.  
Define
$c^\alpha(cl(\gamma))$ to be $cl(c^{\alpha,\beta}(\gamma))$.
Although there is a choice of $\beta$ in the definition of
$c^\alpha(\gamma)$, 
Proposition \ref{p:cayley} shows that $c^\alpha(cl(\gamma))$
independent of this choice.

Dually, suppose $\alpha\in\Delta_{\frac12}^{r,-}(\gamma)$.
Choose a Cayley transform 
$\beta$ of $\alpha$, let
$c_\alpha(\gamma)$ denote $c_{\alpha,\beta}(\gamma)$, and
define
$c_\alpha(cl(\gamma))$ to be $cl(c_{\alpha,\beta}(\gamma))$.
\end{definitionplain}

\medskip

We now introduce the {\it abstract} Cartan subalgebra and Weyl group
as in \cite[2.6]{ic4}.  Thus we fix once and for all a Cartan subalgebra
$\h_a$, a set of positive roots $\Delta^+_a$ of $\Delta_a=\Delta(\g,\h_a)$, and
let $W_a=W(\g,\h_a)$. Suppose $\h$ is any Cartan subalgebra of $\g$
and $\lambda$ is a regular element of $\h^*$. 
Let $\phi_\lambda:\h_a^*\rightarrow \h^*$ be
the unique  isomorphism which is inner for the complex group, such
that $\phi_\lambda\inv(\lambda)$ is $\Delta^+_a$-dominant.
This induces isomorphisms $\Delta_a\simeq\Delta$ and
$W_a\simeq W$, written $\alpha\rightarrow \alpha_\lambda$ and
$w\rightarrow w_\lambda$ respectively.

Fix a regular infinitesimal character and let $\lambda_a$ be the
corresponding $\Delta^+_a$-dominant element of $\h^*_a$ via the
Harish-Chandra homomorphism. Choose a set $\caF_a\subset \h_a^*$ as in
Section \ref{s:F}.

The cross action of $W_a$ is defined using $\caF_a$ as follows. Suppose
$\gamma=(\tH,\Gamma,\lambda)$ is a genuine regular character and
$\phi_\lambda\inv(\lambda)\in\caF_a$. Then for $w\in  W_a$ define
\begin{equation}
\label{e:abstractca}
w\times\gamma=w_\lambda\inv\times\gamma.
\end{equation}
(The cross action on the right hand side is defined using
$\phi_\lambda(\caF_a)$). 
Compare \cite[Definition 4.2]{ic4}.  It is easy to see this is an
action on regular characters.
If $\gamma\sim\gamma'$ then it is easy to see that $w\times\gamma\sim
w\times\gamma'$, so this induces an 
action on 
$K$-orbits of regular characters.

Now fix $\gamma=(\tH,\Gamma,\lambda)$ and suppose $\alpha\in
\Delta(\g,\h_a)$. We say $\alpha$ is real, imaginary, etc.  for $\gamma$ if
this holds for $\alpha_\lambda$.
Write $\Delta^{i,\pm}(\gamma)_a$
and $\Delta^{i,\pm}(\gamma)_a$
for the subsets of $\Delta_a$, pulled back from the corresponding
subsets of $\Delta$.  Suppose $\alpha\in \Delta^{i,-}_{\frac12}(\gamma)_a$. 
Define $c^\alpha(\gamma)=c^{\alpha_\lambda}(\gamma)$;
recall there are   two choices $c^{\alpha,\pm\beta}$ of
$c^\alpha(\gamma)$, and $c^\alpha(cl(\gamma))$ is well defined.
For $\alpha\in \Delta^{r,-}_{\frac12}(\gamma)_a$ define $c_\alpha$ 
similarly. 
Then
\begin{subequations}
\label{e:inverses}
\begin{align} 
c^\alpha(c_\alpha(\gamma))&\sim\gamma\quad(\alpha\in \Delta^{i,-}_{\frac12}(\gamma)_a) \\
c_\alpha(c^\alpha(\gamma))&\sim\gamma\quad(\alpha\in \Delta^{r,-}_{\frac12}(\gamma)_a).
\end{align}
\end{subequations}
To be precise these equalities hold for both choices of
$c^{\alpha,\pm\beta}$ of $c^\alpha$, and $c_{\alpha,\pm\beta}$ of $c_\alpha$.
Similar
equations will arise below (for example, \eqref{e:ca}) and they are to be understood
in the same way.


\begin{definitionplain}
A real admissible subspace for $\gamma$ is a   
a subspace $\frs\subset \h_a^*$ satisfying the following conditions.

\noindent (1) $\frs$ is spanned by by a set of real roots
$\alpha_1,\dots, \alpha_n$,

\noindent (2) the iterated Cayley transform
$c_{\alpha_1}\circ\dots\circ c_{\alpha_n}(\gamma)$ is defined for
some (equivalently, any) choices.
\end{definitionplain}

\begin{lemma}
\label{l:cayleybasic}
A subspace $\frs$ of $\h_a^*$ is a real-admissible subspace for
$\gamma$
if and only if it has a 
basis consisting  of orthogonal roots of
$\Delta^{r,-}_{\frac12}(\gamma)_a$.
This basis is unique (up to ordering and signs).
\end{lemma}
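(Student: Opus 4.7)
The plan is to prove both implications and uniqueness by induction on $n=\dim\frs$, with the base cases $n=0,1$ immediate from the definitions.

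For the ``if'' direction, suppose $\beta_1,\ldots,\beta_n$ is an orthogonal set of roots in $\Delta^{r,-}_{\frac12}(\gamma)_a$ spanning $\frs$; I want to show $c_{\beta_1}\circ\cdots\circ c_{\beta_n}(\gamma)$ is defined. The central observation is that if $\beta$ is a real root orthogonal to $\beta_n$, then $\beta$ remains a real root after Cayley transform by $\beta_n$: the new Cartan involution acts on $\beta$ via $\theta'(\beta)=s_{\beta_n}(\theta(\beta))=s_{\beta_n}(-\beta)=-\beta$, where the last equality uses orthogonality. Moreover, by \eqref{e:ctdifferential}, since $\beta^\vee$ lies in $\ker(\beta_n)$, we have $\langle c_{\beta_n}(\lambda),\beta^\vee\rangle=\langle\lambda,\beta^\vee\rangle$. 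Now Proposition \ref{p:gradings}(a) gives the crucial simplification: $\Delta^{r,-}_{\frac12}(\gamma)_a$ depends only on $\theta$ and $\lambda$ (not on the more delicate data of $\Gamma$), being exactly the set of real roots $\alpha$ with $\langle\lambda,\ch\alpha\rangle\in\Z+\tfrac12$. Combining these observations, $\beta_1,\ldots,\beta_{n-1}\in\Delta^{r,-}_{\frac12}(c_{\beta_n}(\gamma))_a$, so the inductive hypothesis applies.

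For the ``only if'' direction, suppose $\frs$ is spanned by real roots $\alpha_1,\ldots,\alpha_n$ (which we may assume are a basis) such that the iterated Cayley transform is defined. Then $\alpha_n\in\Delta^{r,-}_{\frac12}(\gamma)_a$. I claim the remaining $\alpha_i$ must all be orthogonal to $\alpha_n$. Indeed, if $\langle\alpha_i,\ch\alpha_n\rangle\neq 0$ for some $i<n$, the computation $\theta'(\alpha_i)=s_{\alpha_n}(-\alpha_i)=-\alpha_i+\langle\alpha_i,\ch\alpha_n\rangle\alpha_n$ shows $\alpha_i$ becomes a complex root for $c_{\alpha_n}(\gamma)$, so no further Cayley transform by $\alpha_i$ (or by any element of $\frs$ non-orthogonal to $\alpha_n$ at that stage) can be real. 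This forces $\alpha_1,\ldots,\alpha_{n-1}$ to span the $(n-1)$-dimensional orthogonal complement of $\alpha_n$ in $\frs$, and by the ``if'' direction's calculation they remain in $\Delta^{r,-}_{\frac12}(c_{\alpha_n}(\gamma))_a$; induction completes the mutual orthogonality claim and shows each $\alpha_i\in\Delta^{r,-}_{\frac12}(\gamma)_a$.

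The main obstacle is uniqueness up to sign and ordering, since in a simply laced root system a subsystem like $D_n$ can admit distinct orthogonal bases (e.g.\ $\{e_1\pm e_2,e_3\pm e_4\}$ versus $\{e_1\pm e_3,e_2\pm e_4\}$ inside $D_4$). The plan to handle this is to observe that if $\beta'$ is any orthogonal basis element lying in $\frs$ but not equal to $\pm\beta_i$ for any $i$, then expanding $\beta'=\sum c_i\beta_i$ with $c_i=\tfrac12\langle\beta',\ch\beta_i\rangle\in\tfrac12\Z$ and using $|\beta'|^2=|\beta_i|^2$ (simply laced) forces exactly four of the $c_i$ to equal $\pm\tfrac12$. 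Then a short calculation shows
\[
\langle\lambda,\ch\beta'\rangle \;=\; \tfrac12\sum_{k=1}^{4}\epsilon_k\langle\lambda,\ch\beta_{i_k}\rangle,
\]
and requiring this to lie in $\Z+\tfrac12$ together with the analogous requirement applied to the other basis members produces incompatible parity constraints on the $\epsilon_k$ when combined with the requirement that $\{\beta'_j\}$ itself form a valid iterated Cayley transform sequence (so that the ``only if'' direction applies to it). I expect this combinatorial analysis to be the technical heart of the argument, and it will rely essentially on simply-lacedness via Proposition \ref{p:gradings}, which pins down $\Delta^{r,-}_{\frac12}$ by $\lambda$ alone and eliminates the extra flexibility that would otherwise be provided by a disconnected Cartan.
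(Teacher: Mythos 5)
Your proof of the equivalence itself is correct and follows essentially the paper's route, only with more detail: the paper quotes the discussion after \cite[Definition 5.3]{ic4} for the fact that an admissible sequence is forced to be orthogonal (your observation that a non-orthogonal real root becomes complex after $c_{\alpha_n}$), and treats the converse as clear (your induction, using $\theta'(\beta)=s_{\beta_n}\theta(\beta)=-\beta$ for $\beta\perp\beta_n$, the invariance of $\langle\lambda,\ch\beta\rangle$, and Proposition \ref{p:gradings} to convert the parity condition into non-integrality at every stage). Note also that the forced orthogonality shows an admissible spanning set is automatically linearly independent, which is what actually justifies your ``we may assume a basis.''

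The gap is in the uniqueness assertion, which you only outline, and the parity-contradiction strategy you propose as its ``technical heart'' cannot be completed, because no such contradiction exists. Let $\tG$ be an admissible cover of split $\Spin(4,4)$, $\tH$ the inverse image of the split Cartan (so all roots are real), and $\gamma$ a genuine regular character with $\lambda=(3,\tfrac52,1,\tfrac12)$, so that $\langle\lambda,(e_i\pm e_j)^\vee\rangle=\lambda_i\pm\lambda_j$ (such a $\gamma$ exists by Example \ref{ex:verysimple}). By Proposition \ref{p:gradings}, $\Delta^{r,-}_{\frac12}(\gamma)$ consists exactly of the roots $\pm e_i\pm e_j$ with $\lambda_i\pm\lambda_j\in\Z+\tfrac12$, i.e.\ those with $\{i,j\}$ equal to $\{1,2\},\{1,4\},\{2,3\},\{3,4\}$. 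Hence $\{e_1\pm e_2,\,e_3\pm e_4\}$ and $\{e_1\pm e_4,\,e_2\pm e_3\}$ are two orthogonal quadruples in $\Delta^{r,-}_{\frac12}(\gamma)$, each an admissible Cayley sequence by your own ``if'' argument, spanning the same four-dimensional $\frs$, and not related by reordering and signs. Your own computation applied to $\beta'=e_1-e_4=\tfrac12\bigl[(e_1-e_2)+(e_1+e_2)+(e_3-e_4)-(e_3+e_4)\bigr]$ gives $\langle\lambda,(\beta')^\vee\rangle=\lambda_1-\lambda_4\in\Z+\tfrac12$: a new admissible root, not an incompatibility. In fact, whenever four orthogonal members of $\Delta^{r,-}_{\frac12}(\gamma)$ span a $D_4$-subsystem of real roots, exactly one of the other two orthogonal quadruples of that $D_4$ also lies in $\Delta^{r,-}_{\frac12}(\gamma)$ (the integrality grading rules out only one of the three pairings), so a parity argument can never rule out the second basis. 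For comparison, the paper disposes of uniqueness in one line, asserting that the only roots in the span of a strongly orthogonal set $\{\alpha_1,\dots,\alpha_n\}$ are $\pm\alpha_i$; that is correct when the span contains no $D_4$-subsystem (in particular in type $A$), but it is exactly what fails in the configuration above. So you have correctly isolated the delicate point, but neither your sketch nor any parity computation closes it; to make this part usable one must either restrict to spans containing no such $D_4$-configuration or replace basis-uniqueness by what Definition \ref{d:cayleyr} actually requires, namely a direct argument (for instance via Proposition \ref{p:cayley}, the rigidity of Proposition \ref{p:gammarigid}, and the fact that all the iterated transforms land on the same Cayley-transformed Cartan) that different orthogonal bases of $\frs$ yield $K$-conjugate regular characters.
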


\begin{proof}
The discussion after \cite[Definition 5.3]{ic4} shows that any
real admissible sequence must be orthogonal.
It is clear that the span of a set of strongly orthogonal elements of
$\Delta^{r,-}_{\frac12}$ is admissible. In the simply laced case
orthogonal is equivalent to strongly orthogonal, and the first part
follows easily. It is easy to see the only roots in the span of a set
of strongly orthogonal roots  $\{\alpha_1,\dots,\alpha_n\}$ 
are $\pm\alpha_i$, and the second
assertion follows from this.
\end{proof}

\begin{definitionplain}
\label{d:cayleyr}
Suppose $\frs\subset\h_a^*$ is a real-admissible subspace for
$\gamma$. Let
$c_{\frs}(\gamma)$ denote $c_{\alpha_1}\circ\dots\circ c_{\alpha_n}(\gamma)$
where $\{\alpha_1,\dots, \alpha_n\}\subset \Delta^{r,-}_{\frac12}(\gamma)_a$ is
a strongly orthogonal basis of $\frs$. 
Define $c_\frs(cl(\gamma))=cl(c_\frs(\gamma))$.  By
Proposition \ref{p:cayley} and Lemma \ref{l:cayleybasic}, this is well-defined
independent of all choices.
\end{definitionplain}

Imaginary admissible subspaces and iterated imaginary Cayley
transforms are defined similarly:

\begin{definitionplain}
\label{d:cayleyi}
An imaginary admissible subspace of $\h_a^*$ is one which has a basis of
strongly 
orthogonal roots of $\Delta^{i,-}_{\frac12}(\gamma)_a$. If $\frs$ is such a subspace
let
$c^{\frs}(\gamma)$ denote $c^{\alpha_1}\circ\dots\circ c^{\alpha_n}(\gamma)$
where $\{\alpha_1,\dots, \alpha_n\}\subset \Delta^{r,-}_{\frac12}(\gamma)_a$ is
an orthogonal basis of $\frs$.
Define $c^\frs(cl(\gamma))=cl(c^\frs(\gamma))$.
\end{definitionplain}

The analogue of \cite[Lemma 7.11]{ic4} is straightforward.

\begin{lemma}
\label{l:ca}
Let $\gamma = (\tH, \Gamma, \lam)$ be a genuine regular character
for $\wt G$, with $\phi_\lambda\inv(\lambda)\in\caF_a$.
 Fix $w\in W_a$ and $\frs\subset \frh_a^*$.  Then
$\frs$ is an imaginary admissible subspace for $\gamma$ if and only
if $w\frs$ is an imaginary admissible subspace for $w \times
\gamma$. If this holds then
\begin{subequations}
\label{e:ca}
\renewcommand{\theequation}{\theparentequation)(\alph{equation}}  
\begin{equation}
c^{w\frs}(w\times\gamma) \sim w\times c^{\frs}(\gamma).
\end{equation}
Similarly 
$\frs$ is a real admissible subspace for $\gamma$ if and only
if $w\frs$ is a real admissible subspace for $w \times
\gamma$, in which case
\begin{equation}
c_{w\frs}(w\times\gamma) \sim w\times c_{\frs}(\gamma).
\end{equation}
\end{subequations}
\end{lemma}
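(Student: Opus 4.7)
The plan is to reduce to the one-dimensional case $\frs = \C\alpha$ by induction on $\dim\frs$. For the induction step, note that an orthogonal basis $\{\alpha_1,\dots,\alpha_n\}$ of $\frs$ (which exists by Lemma \ref{l:cayleybasic}) maps under $w$ to an orthogonal set $\{w\alpha_1,\dots,w\alpha_n\}$; granting the one-dimensional case, these lie in $\Delta^{i,-}_{\frac12}(w\times\gamma)_a$, and iterating (a) on each step commutes the cross action past the iterated Cayley transform.

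For the one-dimensional case, I first verify the admissibility equivalence, treating the imaginary case (the real case is identical). Let $\mu = w_\lambda^{-1}\cdot\lambda$ be the parameter of $w\times\gamma$ on $\tH$. A direct check from the defining property of $\phi_\bullet$ yields $\phi_\mu = \phi_\lambda\circ w^{-1}$, and hence $(w\alpha)_\mu = \alpha_\lambda$. Since $\mu - \lambda \in R$ by the definition of the cross action, the sets $\Delta(\lambda)$ and $\Delta_{\frac12}(\lambda)$ agree with their counterparts for $\mu$; and $\theta$ itself does not change (both characters live on $\tH$). By the crucial Proposition \ref{p:gradings}, the set $\Delta^{i,-}_{\frac12}$ depends only on $\theta$ and $\lambda$, so it coincides for $\gamma$ and $w\times\gamma$. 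Combining these observations, $\alpha \in \Delta^{i,-}_{\frac12}(\gamma)_a$ if and only if $w\alpha \in \Delta^{i,-}_{\frac12}(w\times\gamma)_a$.

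For the Cayley-transform identity \eqref{e:ca}(a), both $c^{w\alpha}(w\times\gamma)$ and $w\times c^\alpha(\gamma)$ are genuine regular characters attached to the same Cartan subgroup, namely the inverse image in $\tG$ of $H^{\alpha_\lambda}$ (since $(w\alpha)_\mu = \alpha_\lambda$). By the rigidity Proposition \ref{p:gammarigid}, each is determined by its parameter in $(\h^{\alpha_\lambda})^*$ and its restriction to $Z(\tG)$. The central character is preserved by both operations by construction (Definitions \ref{d:cross} and \ref{d:precayley}), so the restrictions to $Z(\tG)$ agree. A direct computation, using $\phi_\mu = \phi_\lambda\circ w^{-1}$ and the Lie-algebraic compatibility of Cayley transforms with the Weyl group, shows that the two parameters agree up to an element of $W(\g,\h^{\alpha_\lambda})$; this ambiguity is absorbed into the $K$-conjugacy $\sim$. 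The argument for \eqref{e:ca}(b) is the same, replacing imaginary noncompact with real satisfying parity throughout.

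The main delicate point is the handling of the two choices $\pm\beta$ of Cayley-transform root on each side: they produce characters which differ by $s_\beta$-action on the target Cartan, and hence are $K$-conjugate by Proposition \ref{p:cayley}. Since the lemma is phrased up to $K$-conjugacy, this potential ambiguity is exactly what the equivalence $\sim$ is designed to absorb, and no additional bookkeeping is required.
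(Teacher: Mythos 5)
Your proposal is correct and takes essentially the same route as the paper: the admissibility statements come down to elementary facts about $\theta$, integrality and half-integrality (unchanged under the cross action, via Proposition \ref{p:gradings} and $\phi_{w\cdot\lambda}=\phi_\lambda\circ w^{-1}$), and the identities \eqref{e:ca} follow from the rigidity Proposition \ref{p:gammarigid}, since with compatible choices both sides are genuine regular characters on the same Cartan with the same parameter and central character, the residual $\pm\beta$ ambiguity being absorbed by $K$-conjugacy exactly as in the paper's (much terser) proof. One wording caveat: saying the two parameters ``agree up to an element of $W(\g,\h^{\alpha_\lambda})$'' is too weak in itself, since a discrepancy by a general element of the complex Weyl group is not absorbed by $\sim$; what makes the argument work (and what your last paragraph in effect uses) is that with matched choices the parameters are literally equal, the only ambiguity being the choice of $\pm\beta$, which is governed by $s_\beta\in W(G,H^{\alpha_\lambda})$ (or, for type I real roots, by the existence of only one choice) as in Proposition \ref{p:cayley}.
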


\begin{proof}
The facts about admissible subspaces are elementary, and similar to
the linear case. Thus the left hand side of \eqref{e:ca}(a) is defined, 
and by Proposition \ref{p:gammarigid} the two sides of 
\eqref{e:ca}(a) are actually equal after making
the obvious choices. Equivalently
the two sides are $K$-conjugate for any choices. The same holds for 
\eqref{e:ca}(b).
\end{proof}

Define the cross stabilizer of $\gamma$ to be
\begin{equation}
\label{e:crossstab1}
\begin{aligned}
W(\gamma)&=\{w\in W(\lambda)\,|\, w\times\gamma\sim\gamma\}\\
&=\{w\in W(\lambda)\cap W(G,H)\,|\,
w\times\gamma=w\gamma\}.
\end{aligned}
\end{equation}
See \cite[Definition 4.13]{ic4}.
The next result is quite different from what one encounters
in the linear case \cite[Proposition 4.14]{ic4}.

\begin{lemma}
\label{l:crossstab}
\begin{equation}
W(\gamma)=W(\lambda)\cap W(G,H).
\end{equation}
\end{lemma}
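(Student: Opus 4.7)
The plan is to exploit the rigidity afforded by Proposition \ref{p:gammarigid} to identify the two natural actions on $\gamma$. The containment $W(\gamma) \subseteq W(\lambda) \cap W(G,H)$ is immediate from the second formulation in \eqref{e:crossstab1}. For the reverse, fix $w \in W(\lambda) \cap W(G,H)$; the goal is to establish $w \times \gamma = w\gamma$, from which $w \in W(\gamma)$ follows.

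Since $w \in W(G,H)$, choose a lift $\tilde w \in N_{\tG}(\tH)$ (this exists since $\tH$ is the full preimage of $H$, so $N_{\tG}(\tH)\to N_G(H)$ is surjective) and form the conjugate regular character $w\gamma = (\tH, \tilde w \Gamma, w\lambda)$. On the other hand, because $w \in W(\lambda)$, Lemma \ref{l:crossF} gives $w \cdot \lambda = w\lambda$, so by Definition \ref{d:cross} we have $w \times \gamma = (\tH, \Gamma', w\lambda)$, where $\Gamma'$ is the unique genuine representation of $\tH$ making this a regular character with the same central character as $\gamma$.

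The key observation is that conjugation by any element of $\tG$ fixes $Z(\tG)$ pointwise, so $\tilde w \Gamma$ and $\Gamma$ agree on $Z(\tG)$; hence $w\gamma$ has the same central character as $\gamma$. Therefore $w\gamma$ and $w \times \gamma$ are two genuine regular characters on $\tH$ with parameter $w\lambda$ and identical central character, and Proposition \ref{p:gammarigid} forces them to coincide.

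The main conceptual point is precisely the invocation of Proposition \ref{p:gammarigid}: in the linear setting one does not have such rigidity, and the cross-stabilizer can be a proper subgroup of $W(\lambda)\cap W(G,H)$; here, admissibility of the cover and simple-lacedness collapse the ambiguity in $\Gamma$ to the central character alone, and the lemma follows without any further case analysis.
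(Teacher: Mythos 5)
Your proof is correct and is essentially the paper's own argument: both show that for $w\in W(\lambda)\cap W(G,H)$ the characters $w\times\gamma$ (via Lemma \ref{l:crossF}) and $w\gamma$ have parameter $w\lambda$ and the same central character, and then invoke the rigidity of Proposition \ref{p:gammarigid} to conclude $w\times\gamma=w\gamma$. The extra details you supply (lifting $w$ to $N_{\tG}(\tH)$ and noting conjugation fixes $Z(\tG)$ pointwise) merely make explicit what the paper leaves implicit.
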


\begin{proof}
If $w\in W(\lambda)$ then
$w\times\gamma$ is of the form $(H,*,w\lambda)$ by Lemma
\ref{l:crossF}.
If $w\in W(G,H)$ the same holds for $w\gamma$. 
Since $w\times\gamma$ and $w\gamma$ have the same central character
Proposition \ref{p:gammarigid} implies $w\times\gamma=w\gamma$.
\end{proof}

We need a more explicit version of the lemma, for which we use some
notation from \cite[Section 3]{ic4}.
Recall (Section \ref{s:bigradings}) $\Delta(\lambda)$ is 
the set of integral roots.
Let
$\Delta^+=\{\alpha\in\Delta(\lambda)\,|\,\langle\lambda,\ch\alpha\rangle>0\}$.
Let $\Delta^r,\Delta^i$ be the real and imaginary roots as usual
(Section \ref{s:regularCharacters}), and
let $\Delta^i(\lambda)=\Delta^i\cap\Delta(\lambda)$,
let $\Delta^r(\lambda)=\Delta^r\cap\Delta(\lambda)$ as in Section
\ref{s:bigradings}. Let
\begin{subequations}
\renewcommand{\theequation}{\theparentequation)(\alph{equation}}  
\begin{equation}
\rho_i=\frac12\sum_{\alpha\in\Delta^+\cap \Delta^i(\lambda)}\alpha,\quad
\ch\rho_r=\frac12\sum_{\alpha\in\Delta^+\cap \Delta^r(\lambda)}\ch\alpha
\end{equation}
and
\begin{equation}
\Delta^C(\lambda)=\{\alpha\in\Delta(\lambda)\,|\,
\langle\alpha,\ch\rho_r\rangle=
\langle \rho_i,\ch\alpha\rangle=0\}.
\end{equation}
Let 
\begin{equation}
 W^i(\lambda)=W(\Delta^i(\lambda)),
 W^r(\lambda)=W(\Delta^r(\lambda)),
W^C(\lambda)=W(\Delta^C(\lambda)).
\end{equation}
\end{subequations}
Finally let $W^C(\lambda)^\theta$ be the fixed points of $\theta$
acting on $W^C(\lambda)$.

\begin{proposition}
\label{p:crossstab}
\begin{equation}
W(\gamma)=W^C(\lambda)^\theta\ltimes (W^i(\lambda)\times W^r(\lambda)).
\end{equation}
\end{proposition}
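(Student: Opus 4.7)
By Lemma \ref{l:crossstab} we have $W(\gamma)=W(\lambda)\cap W(G,H)$, so the proposition reduces to identifying this intersection with $W^C(\lambda)^\theta\ltimes(W^i(\lambda)\times W^r(\lambda))$. I would derive this from the analogous description in the linear case, \cite[Proposition 4.16]{ic4}, which expresses $W(\gamma)$ for a linear $G$ as a semidirect product of $W^C(\lambda)^\theta$ with $W^i(\gamma)\times W^r(\gamma)$, where $W^i(\gamma)$ and $W^r(\gamma)$ are the strong cross-stabilizers appearing in \eqref{e:strongbig}.

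The essential simplification in our setting is \eqref{e:w1s}: by Proposition \ref{p:gradings}, every integral imaginary root is compact and no integral real root satisfies the parity condition, so $\Delta^{i,+}(\lambda)=\Delta^i(\lambda)$ and $\Delta^{r,+}(\lambda)=\Delta^r(\lambda)$. Consequently the outer bounds in \eqref{e:strongbig1} and \eqref{e:strongbig2} coincide and force the middle terms to equal $W^i(\lambda)$ and $W^r(\lambda)$ respectively. Substituting these into the linear-case decomposition produces exactly the asserted formula.

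What must be checked is that the linear-case decomposition carries over verbatim to the covered setting. Since $\tG$ is a central extension of $G$, one has $W(\tG,\tH)=W(G,H)$, so the Weyl group data are insensitive to whether we work upstairs or downstairs. The three pieces on the right all embed in $W(G,H)\cap W(\lambda)$: reflections in real integral roots always lift to $W(G,H)$, giving $W^r(\lambda)$; reflections in integral imaginary roots lie in $W(K,H)\subseteq W(G,H)$ once one knows (via Proposition \ref{p:gradings}) that all such roots are compact, giving $W^i(\lambda)$; and elements of $W^C(\lambda)^\theta$ are realized by $\theta$-stable products $s_\alpha s_{\theta\alpha}$ for complex $\alpha\in\Delta^C(\lambda)$, which also lie in $W(G,H)$. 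The semidirect product structure is immediate from orthogonality of $\Delta^i$ and $\Delta^r$ together with the vanishing conditions $\langle\alpha,\check\rho_r\rangle=\langle\rho_i,\check\alpha\rangle=0$ defining $\Delta^C(\lambda)$.

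The main obstacle is the reverse containment: that every $w\in W(\lambda)\cap W(G,H)$ actually factors through these three pieces. This uses the action of $w$ on the integral roots preserving the $\theta$-structure, together with the fact that any $w\in W(\lambda)$ permuting $\Delta^i(\lambda)\cup\Delta^r(\lambda)$ and commuting with $\theta$ is forced, modulo $W^i(\lambda)\times W^r(\lambda)$, to come from $W^C(\lambda)^\theta$. This is the content of the argument in \cite[Section 4]{ic4}; since it depends only on the root system $\Delta(\lambda)$, the involution $\theta$, and the identifications \eqref{e:w1s}, it transfers to the covered setting without modification.
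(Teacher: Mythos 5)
Your proposal is correct and follows essentially the same route as the paper: reduce via Lemma \ref{l:crossstab} to $W(\lambda)\cap W(G,H)$, invoke the linear-case structure theory of \cite[Propositions 3.12 and 4.16]{ic4}, and then use Proposition \ref{p:gradings} (all integral imaginary roots compact, no integral real root satisfying the parity condition) to squeeze the middle terms of \eqref{e:strongbig1}--\eqref{e:strongbig2} down to $W^i(\lambda)$ and $W^r(\lambda)$. The paper carries out exactly this sandwich argument (explicitly only for the imaginary factor, intersecting with $W(M,H)$), so no further comment is needed.
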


\begin{proof}
By Lemma \ref{l:crossstab} $W(\gamma)=W(\lambda)\cap W(G,H)$, and as
in \cite[Proposition 4.14]{ic4} this equals
$W(\lambda)^\theta\cap W(G,H)$.  Propositions 3.12 and 4.16 of
\cite{ic4}  compute the two terms on the right hand side; the result
is
\begin{equation}
W(\gamma)=W^C(\lambda)^\theta\ltimes ([W^i(\lambda)\cap W(M,H)]\times W^r(\lambda)).
\end{equation}
As in \cite[Proposition 4.16(d)]{ic4}
\begin{equation}
W(\Delta^{i,+}(\gamma))\subset W(M,H)\subset\Norm_{W^i}(\Delta^{i,+}(\gamma)).
\end{equation}
Intersecting with $W^i(\lambda)$ gives
\begin{equation}
W(\Delta^{i,+}(\gamma))\subset W(M,H)\cap W^i(\lambda)\subset\Norm_{W^i(\lambda)}(\Delta^{i,+}(\gamma)).
\end{equation}
By 
\eqref{e:gradings}(b),
$\Delta^{i,+}(\gamma)=\Delta^i(\lambda)$, so this gives
\begin{equation}
W^i(\lambda)\subset W(M,H)\cap W^i(\lambda)\subset 
\Norm_{W^i(\lambda)}(\Delta^i(\lambda)).
\end{equation}
The outer terms are the same, so $W(M,H)\cap W^i(\lambda)=W^i(\lambda)$.
This completes the proof.  
\end{proof}

In the setting of the abstract Weyl group the abstract cross
stabilizer of $\gamma$ is
\begin{equation}
\label{e:crossstab-a}
W(\gamma)_a=\{w\in W(\lambda_a)\cap W(G,H)_a\,|\, w\times\gamma=w\gamma\}.
\end{equation}
where $W(G,H)_a=\{w\in W_a\,|\, w_\lambda\in W(G,H)\}$.
The obvious analogues of Lemma \ref{l:crossstab} and Proposition \ref{p:crossstab} hold.

\subsection{Blocks}
\label{s:blocks}
We now turn to blocks of regular characters (Definition 
\ref{d:blockregchar}) still assuming $G$ is simply laced.

\begin{lemma}
\label{l:blocksclosed}
Blocks of genuine regular characters
are  closed under $K$-conjugacy, the cross action of $W(\lambda_a)$,
and 
the Cayley transforms of Definitions
\ref{d:cayleyr} and \ref{d:cayleyi}. 
\end{lemma}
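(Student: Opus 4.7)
The plan is to handle the three closure properties separately, starting with the easiest. The $K$-conjugacy invariance is immediate from Theorem \ref{t:lang}: $K$-conjugate regular characters have equivalent irreducible quotients, so they trivially lie in the same block. The remaining two assertions are the substantive ones.

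For closure under the cross action of $W(\lambda_a)$, I would reduce to the case of a simple reflection $w = s_\alpha$ with $\alpha$ a simple integral root, since $W(\lambda_a)$ is generated by such reflections. For a single such $\alpha$ the plan is to invoke the wall-crossing (translation) functor $\psi_\alpha$ obtained by translating to a wall of $\alpha$ and back to the original regular infinitesimal character. This composite is exact, and $\psi_\alpha(\ol\pi(\gamma))$ contains both $\ol\pi(\gamma)$ and $\ol\pi(s_\alpha \times \gamma)$ among its composition factors. Since all composition factors of a single Harish-Chandra module lie in one block, this forces $\gamma$ and $s_\alpha \times \gamma$ to be block equivalent. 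In the linear case this is classical; for admissible covers of simply laced groups the same argument applies in the genuine category via the translation-functor formalism of \cite{rt3}.

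For Cayley transforms the plan is analogous. Fix $\alpha \in \Delta^{i,-}_{\frac12}(\gamma)_a$ (the real case is handled symmetrically and follows via the inverse relations \eqref{e:inverses}). When $\alpha$ is integral, the classical Hecht--Schmid identity supplies an exact sequence of standard modules relating $\pi(\gamma)$ and $\pi(c^\alpha(\gamma))$. The composition factors of these standard modules therefore interlock, and the irreducible quotients $\ol\pi(\gamma)$ and $\ol\pi(c^\alpha(\gamma))$ must lie in the same block. When $\alpha$ is half-integral but not integral, no such classical identity is directly available; instead I would invoke the extended Hecke-module formalism of \cite{rt3}, in which the half-integral Cayley transform appears as one of the structural operations and produces the analogous Grothendieck-group relation between the two standard modules. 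Closure under the iterated Cayley transforms $c^{\frs}$ and $c_\frs$ of Definitions \ref{d:cayleyi} and \ref{d:cayleyr} then follows by induction on $\dim \frs$, using that the intermediate Cayley transforms in any orthogonal basis are again of the types treated above (by Lemma \ref{l:cayleybasic}).

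The main obstacle will be the non-integral half-integral Cayley transform, which is a genuinely new feature of the admissible nonlinear simply laced setting with no counterpart for the linear group $G$ itself. For such an $\alpha$ the usual Hecht--Schmid style exact sequence is unavailable, and the required block-preserving relation in the genuine Grothendieck group of $\tG$ can only be extracted from the extended framework of \cite{rt3}. Once that exact sequence is in hand, the rest of the argument is a formal reduction to simple reflections and single Cayley transforms as sketched above.
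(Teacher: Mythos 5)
Your overall route is essentially the paper's: the paper handles $K$-conjugacy from the definition and Theorem \ref{t:lang}, cites \cite{green} for the cross action of $W(\lambda_a)$ and for real Cayley transforms (legitimately, since Lemma \ref{l:cross} identifies the cross action for $w\in W(\lambda_a)$ with that of \cite{green}, and the parity arguments reduce to the rank-one group $\wt{M_\alpha}\simeq\wt{\SL}(2,\R)$), and then gets the noncompact imaginary Cayley transforms from \eqref{e:inverses}; you do the same things with the \cite{green} citation partially unpacked into wall-crossing and Hecht--Schmid arguments, and with the roles of real and imaginary Cayley transforms reversed (you treat the imaginary case directly and deduce the real case from \eqref{e:inverses}). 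That reversal is harmless. Two steps, however, need repair as written. First, the claim that $\psi_\alpha(\ol\pi(\gamma))$ always contains both $\ol\pi(\gamma)$ and $\ol\pi(s_\alpha\times\gamma)$ among its composition factors is false when $\alpha$ lies in the $\tau$-invariant of $\ol\pi(\gamma)$: translation to the $\alpha$-wall then kills the module and the composite is zero. Likewise ``all composition factors of a single Harish-Chandra module lie in one block'' requires indecomposability. The standard repair is the case analysis of \cite{green}, which in the present genuine setting is painless: by Proposition \ref{p:gradings} the simple integral roots lying in the $\tau$-invariant are compact imaginary or real failing the parity condition, and for such $\alpha$ one has $s_\alpha\in W(\lambda)\cap W(G,H)$, so $s_\alpha\times\gamma\sim\gamma$ by Lemma \ref{l:crossstab} and there is nothing to prove; for complex $\alpha$ with $\theta\alpha<0$ one runs your argument starting from $s_\alpha\times\gamma$ instead.

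Second, your division of the Cayley-transform case into ``integral: classical Hecht--Schmid'' and ``half-integral but not integral: \cite{rt3}'' has an empty first branch. By Proposition \ref{p:gradings} (see also Proposition \ref{p:cayley}), every imaginary root in $\Delta^{i,-}_{\frac12}(\gamma)$ and every real root in $\Delta^{r,-}_{\frac12}(\gamma)$ satisfies $\langle\lambda,\ch\alpha\rangle\in\Z+\frac12$; integral imaginary roots are compact and integral real roots fail the parity condition, so no Cayley transform of Definition \ref{d:cayleyr} or \ref{d:cayleyi} is ever taken at an integral root. Thus the case you present as exceptional is the only case, and the entire weight of your argument rests on the appeal to the composition-series relations encoded in \cite{rt3}. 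That appeal is defensible --- it is parallel to the paper's citation of \cite{green}, whose rank-one reducibility argument carries over to $\wt{\SL}(2,\R)$, where the parity condition is exactly reducibility of the genuine principal series --- but your write-up should acknowledge that this citation, not Hecht--Schmid for integral roots, is doing all the work.
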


\begin{proof}  Closure under $K$-conjugacy follows
from the definition (and Theorem \ref{t:lang}), and
the case of the cross action of $W(\lambda_a)$ and real Cayley
transforms are covered by \cite{green}. 
The case of noncompact imaginary Cayley transforms then follows from
\eqref{e:inverses}.
\end{proof}

\begin{proposition}
\label{p:blocks}
Blocks of genuine regular characters with infinitesimal character
$\lam_a$ are the smallest sets closed 
under $K$-conjugacy, the cross action of $W(\lambda_a)$,
and  the Cayley transforms of Definitions
\ref{d:cayleyr} and \ref{d:cayleyi}.   
\end{proposition}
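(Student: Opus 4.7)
The plan is to prove the two containments. By Lemma \ref{l:blocksclosed}, for any genuine regular character $\gamma$ at infinitesimal character $\lambda_a$, the block $\scB(\gamma)$ is closed under $K$-conjugacy, the cross action of $W(\lambda_a)$, and the Cayley transforms of Definitions \ref{d:cayleyr} and \ref{d:cayleyi}. Hence the orbit $\caO(\gamma)$ of $\gamma$ under these three operations is contained in $\scB(\gamma)$.

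For the reverse containment, I would argue that if $\gamma'\in\scB(\gamma)$ then $\gamma'\in\caO(\gamma)$. Block equivalence is by definition the transitive closure of the relation $\irr(\gamma)\sim\irr(\gamma')$ when $\Ext^1(\irr(\gamma),\irr(\gamma'))\neq 0$, so it is enough to establish the following local statement: if $\Ext^1(\irr(\gamma),\irr(\gamma'))\neq 0$, then $\gamma'$ is obtained from $\gamma$ (after possibly $K$-conjugating) by a single cross action by an element of $W(\lambda_a)$ or by a single Cayley transform of the type in Definition \ref{d:cayleyr} or \ref{d:cayleyi}.

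To prove this local statement I would invoke the extended Hecke module formalism and Kazhdan--Lusztig--Vogan recursion developed for these covers in \cite[Part I]{rt3}. Nonvanishing of $\Ext^1$ between irreducible subquotients of standard modules at the same regular infinitesimal character is detected via the composition-series structure of a standard module after application of a wall-crossing translation functor through a simple wall. The decomposition of the wall-crossed standard module into standard modules is governed combinatorially by precisely the atomic cross actions and Cayley transforms in our list; this is the nonlinear analogue of the arguments in \cite[Chapter 14]{green}. The rigidity statement of Proposition \ref{p:gammarigid}, together with the identification of half-integral bigradings with Lie-algebra data in Proposition \ref{p:gradings}, ensures that the combinatorial structure of \cite{rt3} is controlled entirely by the three operations listed, so the argument transfers without essential modification.

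The main obstacle is the handling of non-integral but half-integral infinitesimal characters, where the Cayley transforms of Section \ref{s:cayley} include transforms through half-integral but non-integral roots that have no counterpart in the linear theory. Here I would exploit the fact that, by Proposition \ref{p:gradings}, the half-integral bigrading $\ghalf(\gamma)$ depends only on $\theta$ and $\lambda$, so the combinatorics of the half-integral Cayley transforms mirror the integral ones at the level of $\Delta_{\frac12}(\lambda)$. With this reduction in hand, the closure argument runs parallel to the linear case and yields $\scB(\gamma)\subseteq\caO(\gamma)$.
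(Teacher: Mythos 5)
Your first containment (Lemma \ref{l:blocksclosed} gives that the block contains the orbit under the three operations) matches the paper. The reverse containment is where the proposal has genuine gaps. First, your ``local statement'' is both stronger than what is true and not the right reduction: nonvanishing of $\Ext^1(\irr(\gamma),\irr(\gamma'))$ does not force $\gamma'$ to be obtained from $\gamma$ by a \emph{single} cross action or Cayley transform (even for linear groups, pairs with nontrivial extensions can differ by long chains of operations). The correct reduction, which the paper takes from \cite[9.2.10]{green}, is that $\Ext^1\neq 0$ forces one of the two irreducibles to be a composition factor of the standard module of the other; the substantive task is then to show that every composition factor of $\std(\gamma)$ has parameter reachable from $\gamma$ by a \emph{sequence} of the listed operations. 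Your appeal to ``wall-crossing through a simple wall detecting $\Ext^1$'' blurs these two steps and does not supply either of them.

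Second, and more seriously, you never confront the case that is the actual new content here: $\std(\gamma)$ can be reducible even when no simple root of $\Delta^+_{\frac12}(\lambda)$ is either in $\Delta^{r,-}_{\frac12}$ or integral complex with $\theta(\alpha)<0$, because reducibility can be caused by \emph{nonintegral} complex roots. The statement of the proposition only allows cross action by $W(\lambda_a)$, so one cannot simply ``mirror the integral combinatorics at the level of $\Delta_{\frac12}(\lambda)$'' as you suggest; Proposition \ref{p:gradings} does not address this. The paper resolves it by inducting on a half-integral length function: it uses \cite[Remark 7.7]{rt3} to produce a nonintegral complex simple root $\alpha$ with $\theta(\alpha)\notin\Delta^+_{\frac12}(\lambda)$, applies the cross action by $s_{\alpha_\lambda}$, which by \cite[Section 4]{ic1} is implemented by a nonintegral wall-crossing functor that is an \emph{equivalence of categories} onto a different infinitesimal character in the family $\caF$, applies the inductive hypothesis there, transports the resulting chain back via Lemma \ref{l:ca}, and finally invokes the last sentence of Lemma \ref{l:cross} to conclude that the net cross action relating two parameters with the same infinitesimal character must lie in $W(\lambda_a)$ rather than merely $W_a$. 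None of these ingredients (the modified length, the change of infinitesimal character, the transport via Lemma \ref{l:ca}, the final descent from $W_a$ to $W(\lambda_a)$) appear in your proposal, and without them the argument does not close.
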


\begin{proof}
We argue as in \cite[9.2.11]{green}.  More precisely, using
Lemma \ref{l:blocksclosed} and \cite[9.2.10]{green} (whose
proof carries over in our setting), we are
reduced to showing that if $\irr(\gamma')$ is a composition factor of 
$\std(\gamma)$, then $\gamma$ and $\gamma'$ may be related by the
relations listed in the proposition.  We proceed by induction on
a modification of
the usual integral length of $\gamma = (\tH, \Gamma, \lambda)$:
\[
l(\gamma) = \frac12\left |\{ \alpha \in \Delta^+_{\frac12}(\lam) \; | \; 
\theta(\alpha) \notin \Delta^+_{\frac12}(\lam) \} \right | + \dim(\fra_\R).
\]
(Recall we are in the simply laced case so 
$\Delta^+_{\frac12}(\lambda) = \Delta^+(2\lam)$.) 

Suppose there is a root $\alpha$ which is simple in
$\Delta^+_{\frac12}(\lambda)$, and either
$\alpha\in\Delta^{r,-}_{\frac12}(\lambda)$ or
$\alpha$ is complex, integral, and $\theta(\alpha)<0$.
In the first case the argument of \cite[9.2.11]{green} holds in this
case. In the second, since $\alpha$ is also simple for
$\Delta^+(\lambda)$,  the argument of \cite[9.2.11]{green} applies
directly.

So suppose $\std(\gamma)$ is reducible, but
no roots of the kind just described exist.  Then (see 
\cite[Remark 7.7]{rt3}) there exists a (nonintegral)
complex root $\alpha$ which is simple in $\Delta^+_{\frac12}(\lam)$
such that $\theta(\alpha) \notin \Delta^+_{\frac12}(\lam)$.
Choose a family $\caF$  and
define the abstract cross action in $W_a$ as in \eqref{e:abstractca}.
Consider
$\eta = s_{\alpha_\lam} \times \gamma$ (where now $s_{\alpha_\lam}$ is 
the reflection in the abstract Weyl group through the abstract
root $\alpha_\lam$ corresponding to $\alpha$;
see the discussion about Definition \ref{d:precayley}).
Then $l(\eta) < l(\gamma)$, but
$\eta$ has  infinitesimal character $s_{\alpha_\lam}\cdot \lam_a \neq \lam_a$.  
The calculations of \cite[Section 4]{ic1} show that
$\std(\eta) = \psi(\std(\gamma))$ for a nonintegral wall crossing
functor $\psi$ which is an equivalence of categories.  
Since $\psi$ is an equivalence, we may write $\psi(\irr(\gamma'))=\irr(\eta')$
using Theorem \ref{t:lang}.
Thus $\irr(\gamma')$ is a composition factor of $\std(\gamma)$
if and only if $\irr(\eta')$ is a composition factor of $\std(\eta)$.
By induction, $\eta$ and $\eta'$ are related by a sequence of the kind
described in the statement of the proposition with 
$W(s_{\alpha_\lam}\cdot \lam_a)$
in place of $W(\lam_a)$.
From Lemma \ref{l:ca}, we conclude that $\gamma$ and $\gamma'$ 
are related by conjugation, Cayley transforms, and the cross
action in $W_a$.
But the last sentence of 
Lemma \ref{l:cross} implies that the cross action must in 
fact be in $W(\lam_a)$, as claimed.
\end{proof}

We now introduce unions of blocks as in \cite{rt3}. Fix
$\lambda_a\in\h_a$ and a  block $\scB$
off genuine regular characters of
 infinitesimal character $\lambda_a$.
Choose a family $\caF\subset\h_a^*$
as above containing $\lam_a$ and use it to define the cross action as 
in \eqref{e:abstractca}.
Recall the definition  (Section \ref{s:bigradings})
of $W_{\frac12}(\lambda_a)$; since $G$ is simply laced this equals
$W(2\lambda_a)$.  
Set
\begin{equation}
\label{e:B}
\bfB = \{w\times\gamma \; | \; w \in W_{\frac12}(\lam_a), \; \gamma \in \scB\}.
\end{equation}
Then $\bfB$ is a union of the blocks discussed above, each of whose
infinitesimal character is in $\caF$.

\begin{corollary}
\label{c:bfB}
The set $\bfB$ of $\eqref{e:B}$ is the
smallest set containing $\caB$ which is
closed under $K$-conjugacy,
the cross action of
$W_{\frac12}(\lambda_a)$, and the
Cayley transforms of Definitions 
\ref{d:cayleyr} and \ref{d:cayleyi}.
\end{corollary}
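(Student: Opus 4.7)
The plan is to prove this by a two-part argument: first show that $\bfB$ is itself closed under the three listed operations, then show that any set containing $\scB$ and closed under these operations must contain $\bfB$.

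For closure of $\bfB$ under the cross action of $W_{\frac12}(\lambda_a)$: since the abstract cross action is a group action on regular characters, $v \times (w \times \gamma) = (vw) \times \gamma \in \bfB$ whenever $v, w \in W_{\frac12}(\lambda_a)$ and $\gamma \in \scB$. Closure under $K$-conjugacy is automatic from the $K$-stability of each constituent block (Lemma \ref{l:blocksclosed}) together with the fact that the abstract cross action is well-defined at the level of $K$-orbits. The substantive case is closure under Cayley transforms. Suppose $\gamma' = w \times \gamma \in \bfB$ with $\gamma \in \scB$ and $w \in W_{\frac12}(\lambda_a)$, and let $\alpha \in \Delta^{i,-}_{\frac12}(\gamma')_a$. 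Applying Lemma \ref{l:ca} (with $w^{-1}$ in place of $w$) shows that the one-dimensional subspace spanned by $w^{-1}\alpha$ is imaginary admissible for $\gamma$ — so $w^{-1}\alpha \in \Delta^{i,-}_{\frac12}(\gamma)_a$ — and
$$
c^{\alpha}(\gamma') \; \sim \; w \times c^{w^{-1}\alpha}(\gamma).
$$
By Proposition \ref{p:blocks} (equivalently, Lemma \ref{l:blocksclosed}), $\scB$ is closed under the Cayley transforms of Definition \ref{d:cayleyi}, so $c^{w^{-1}\alpha}(\gamma) \in \scB$. Hence the right-hand side lies in $\bfB$ by construction, and therefore so does $c^{\alpha}(\gamma')$ by closure under $K$-conjugacy. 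The real Cayley transform case is identical using Lemma \ref{l:ca}(b) and Definition \ref{d:cayleyr}.

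For minimality, let $\bfB'$ be any set containing $\scB$ and closed under the three operations. Closure under the cross action of $W_{\frac12}(\lambda_a)$ applied directly to elements of $\scB$ gives $w \times \gamma \in \bfB'$ for all $w \in W_{\frac12}(\lambda_a)$ and $\gamma \in \scB$, so $\bfB' \supseteq \bfB$. The only step requiring genuine content is the compatibility between the cross action of $W_{\frac12}(\lambda_a)$ and the admissibility conditions used to define Cayley transforms in Definitions \ref{d:cayleyi} and \ref{d:cayleyr}; but this is exactly what Lemma \ref{l:ca} provides, so the corollary follows by assembling these ingredients.
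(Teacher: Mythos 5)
Your proof is correct and matches the argument the paper intends: the corollary is stated without proof as an immediate consequence of Lemma \ref{l:blocksclosed}/Proposition \ref{p:blocks}, Lemma \ref{l:ca}, and the group-action property of the cross action, and your write-up assembles exactly these ingredients (closure of $\bfB$ via Lemma \ref{l:ca} together with closure of $\scB$, minimality being immediate from the definition of $\bfB$). The only point stated loosely is literal $K$-closure of $\bfB$ (equality, not just $K$-conjugacy, of $k\cdot(w\times\gamma)$ and $w\times(k\cdot\gamma)$), which follows from the rigidity of Proposition \ref{p:gammarigid}; this is a routine detail and does not affect the argument.
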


\sec{Duality}
\label{s:kl}

Throughout this section let $\tG$ be an admissible double cover of a
simply laced real reductive linear group $G$ (Section \ref{s:notation}
and Definition \ref{d:admissible}).  The representation theory of
$\tG$ is sufficiently rigid to reduce the existence of a duality
theory to the duality of  bigradings
(Definition \ref{d:big}).
Theorem \ref{t:gendual} makes this precise.

\begin{definitionplain}
\label{d:weaklymin}
A genuine regular character $\gamma$ (Definition
\ref{d:reg})  is called {\it weakly minimal} if
$\Delta_{\frac12}^{r,-}(\gamma)$ (cf.~\eqref{e:psir+}) is
empty, i.e.~there are no real, half-integral roots satisfying the
parity condition.
Similarly we say $\gamma$ is  weakly
maximal if $\Delta^{i,-}_{\frac12}(\gamma)$ is empty.
\end{definitionplain}

It is easy to see that every block $\scB$ of genuine regular characters
contains a weakly minimal
element: if $\gamma \in \scB$ and $\frs$ is a maximal
real-admissible subspace for $\gamma$
then $c_\frs(\gamma)$ is weakly minimal.  In the same way
weakly maximal elements exist.  The following  uniqueness statement for
these elements  will be important for the proof of Proposition
\ref{p:standardblocks}.  The analogous result in the linear case is
\cite[Lemma 8.10]{ic4}.

Fix  an abstract Cartan subalgebra $\h_a$ and  a regular element $\lambda_a\in\h_a^*$.

\begin{proposition}
\label{p:weaklyminimal}
Let $\scB$ be a block of genuine regular characters for $\tG$
with infinitesimal character $\lambda_a$.
Suppose $\gamma$ and $\gamma'$ are weakly minimal elements 
in $\scB$.  Then there is an element $w \in W(\lam_a)$
such that $\gamma' \sim w \times \gamma$.  An analogous
statement holds for weakly maximal elements.
\end{proposition}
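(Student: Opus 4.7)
The plan is to induct on the minimal number $n$ of Cayley transforms in a sequence of operations connecting $\gamma$ to $\gamma'$. By Proposition \ref{p:blocks}, such a sequence exists; it consists of $K$-conjugacies, cross actions by elements of $W(\lam_a)$, and real and imaginary Cayley transforms along half-integral roots (Definitions \ref{d:cayleyr} and \ref{d:cayleyi}).

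A preliminary observation is that both $K$-conjugacy and cross action by $W(\lam_a)$ preserve weak minimality. Indeed, cross action alters $\lambda$ only by an element of the root lattice, so it preserves the sets $\Delta^r(\lambda)$ and $\Delta^r_{\frac12}(\lambda)$, hence also $\Delta^{r,-}_{\frac12}(\gamma) = \Delta^r_{\frac12}(\lambda) \setminus \Delta^r(\lambda)$ (using Proposition \ref{p:gradings}(a)); and $K$-conjugacy is manifestly an invariant. In the base case $n=0$, the sequence consists only of $K$-conjugacies and $W(\lam_a)$ cross actions, giving $\gamma' \sim w \times \gamma$ for some $w \in W(\lam_a)$, as required.

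For $n \geq 1$, since $\gamma$ is weakly minimal, $\Delta^{r,-}_{\frac12}(\gamma)$ is empty and no real Cayley transform is defined on $\gamma$; hence the first Cayley transform in the sequence must be imaginary, of the form $c^\alpha$ for some $\alpha \in \Delta^{i,-}_{\frac12}(\gamma)$ (after possibly commuting an initial cross action past it using Lemma \ref{l:ca}). Symmetrically, the last Cayley transform must be real: if it were imaginary, say $c^{\alpha'}$, then its Cayley image $\beta'$ would lie in $\Delta^{r,-}_{\frac12}(\gamma')$, contradicting the weak minimality of $\gamma'$.

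The main work is to show that an initial imaginary Cayley $c^\alpha$ can be cancelled against some later real Cayley transform in the sequence, reducing $n$. Let $\beta$ denote the Cayley image of $\alpha$, a root in $\Delta^{r,-}_{\frac12}(c^\alpha(\gamma))$. Using Lemma \ref{l:ca} one tracks the image of $\beta$ under the cross actions that follow; because real Cayley transforms can only be applied along roots in $\Delta^{r,-}_{\frac12}$, and because the intermediate Cartan subgroups must eventually return to the split-rank level of $\gamma'$, at some point a real Cayley transform is applied along the transported copy of $\beta$. Using the inversion identity \eqref{e:inverses}(a), namely $c_\beta \circ c^\alpha(\gamma) \sim \gamma$, together with the commutation relations of Lemma \ref{l:ca}, one collapses this matched pair and strips it from the sequence, reducing $n$ by at least two. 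Induction then yields the desired conclusion. The weakly maximal case is entirely symmetric, with the roles of real and imaginary Cayley transforms exchanged and \eqref{e:inverses}(b) replacing \eqref{e:inverses}(a).

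The main obstacle will be the combinatorial bookkeeping in the cancellation argument: when $\alpha$ is half-integral but not integral, the transported copy of $\beta$ interacts with the larger extended Weyl group $W_{\frac12}(\lam_a)$ rather than $W(\lam_a)$ alone, and one must verify that all the intermediate cross actions can be absorbed into $W(\lam_a)$ after the cancellation. This is ultimately controlled by Proposition \ref{p:gammarigid}, which gives the rigidity needed to identify two genuine regular characters once their Cartan subgroups, their $\lambda$'s, and their restrictions to $Z(\tG)$ are matched.
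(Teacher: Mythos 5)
Your overall strategy---cancel Cayley transforms until only cross actions and conjugations remain---is in the same spirit as the paper's argument, but there is a genuine gap at the central step. You assert that, after the initial imaginary Cayley transform $c^\alpha$ with image root $\beta$, ``at some point a real Cayley transform is applied along the transported copy of $\beta$,'' justified only by the remark that the intermediate Cartan subgroups ``must eventually return to the split-rank level of $\gamma'$.'' Neither claim is established: a priori a later real Cayley transform in the sequence could be taken along a root of $\Delta^{r,-}_{\frac12}$ of an intermediate character having nothing to do with $\beta$, and nothing in Proposition \ref{p:blocks} or Lemma \ref{l:ca} forces the sequence to revisit $\beta$; moreover, the fact that the two weakly minimal endpoints lie on Cartan subgroups of the same type is a consequence of the proposition, not an available hypothesis. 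The missing ingredient is precisely the paper's Lemma \ref{lem:weaklyminimal}: if $\gamma$ is weakly minimal and $\frs$ is spanned by an orthogonal set $S\subset\Delta^{i,-}_{\frac12}(\gamma)_a$, then $\Delta^{r,-}_{\frac12}(c^{\frs}(\gamma))_a=S\cup(-S)$, because $\Delta^{r,-}_{\frac12}(\gamma)_a$ is empty and the only real parity roots created by $c^{\frs}$ are the elements of $\pm S$. Hence every real admissible subspace for $c^{\frs}(\gamma)$ is spanned by a subset of $S$, so any subsequent real Cayley transform necessarily undoes part of the imaginary ones already performed. This is exactly what makes your cancellation step true, and it must be proved rather than assumed; your induction does not close without it (nor do you address moving the matched real Cayley transform back through the intervening operations, which the paper avoids by working with iterated Cayley transforms along admissible subspaces, cf.\ Definitions \ref{d:cayleyr} and \ref{d:cayleyi}).

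Once that lemma is available, the bookkeeping is cleaner along the paper's lines: use Proposition \ref{p:blocks} and Lemma \ref{l:ca} to write $w\times\gamma\sim c(\gamma')$ with $w\in W(\lambda_a)$ and $c$ a composition $c^{\frs_1}\circ c_{\frs_2}\circ\cdots\circ c^{\frs_n}$ of iterated Cayley transforms, collapse $c$ by repeated use of the lemma at the weakly minimal end $\gamma'$ to a single $c^{\frs}(\gamma')$, and then apply \eqref{e:inverses} and Lemma \ref{l:ca} to conclude that $w\inv\frs$ is real admissible for the weakly minimal $\gamma$, forcing $\frs=0$ and $w\times\gamma\sim\gamma'$. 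Note also that the difficulty you flag in your last paragraph is not the real one: the sequence produced by Proposition \ref{p:blocks} only involves the cross action of $W(\lambda_a)$, so no absorption from $W_{\frac12}(\lambda_a)$ is needed; the essential point is the control of $\Delta^{r,-}_{\frac12}$ after imaginary Cayley transforms out of a weakly minimal element.
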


\begin{lemma}
\label{lem:weaklyminimal}
Suppose $\gamma$ is weakly minimal,
$\frs\subset\h_a^*$ is an imaginary admissible subspace for $\gamma$, and
$\fru\subset\h^*_a$ is a real admissible subspace for $c^\frs(\gamma)$.
Then there is an imaginary admissible subspace $\frw$ for
$\gamma$ such that
\begin{equation}
c_{\fru}(c^{\frs}(\gamma))\sim c^{\frw}(\gamma).
\end{equation}
\end{lemma}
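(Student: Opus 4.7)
The plan is to induct on $\dim(\fru)$. Using Lemma \ref{l:cayleybasic}, fix strongly orthogonal bases $\{\alpha_1,\ldots,\alpha_n\}$ of $\frs$ in $\Delta^{i,-}_{\frac12}(\gamma)_a$ and $\{\beta_1,\ldots,\beta_m\}$ of $\fru$ in $\Delta^{r,-}_{\frac12}(c^\frs(\gamma))_a$. The base case $m=0$ is trivial with $\frw=\frs$. The crucial input from weak minimality is that no $\beta_j$ can be orthogonal to $\frs$: since imaginary Cayley transforms along $\frs$ fix the abstract involution $\theta$ on $\frs^\perp$, $\beta_j\perp\frs$ would make $\beta_j$ real at $\gamma$, and since the abstract pairing $\langle\lambda_a,\ch{\beta_j}\rangle$ is unchanged across Cayley transforms, this would put $\beta_j\in\Delta^{r,-}_{\frac12}(\gamma)$, contradicting Definition \ref{d:weaklymin}.

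Case I: some $\beta_j\in\frs$. In the simply laced setting the only roots in an orthogonal span of roots are the basis vectors up to sign, so $\beta_j=\pm\alpha_i$. Applying \eqref{e:inverses} to cancel $c_{\beta_j}c^{\alpha_i}$ (legal since all other $\alpha_k$ and $\beta_\ell$ are orthogonal to $\alpha_i$ and hence commute past it), we reduce to strictly smaller $(\frs',\fru')$ with $\frs'=\frs\cap\R\alpha_i^\perp$ and $\fru'=\fru\cap\R\beta_j^\perp$, and the inductive hypothesis yields $\frw\subset\frs'\subset\frs$.

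Case II: no $\beta_j\in\frs$. A direct computation of the abstract involution at $c^\frs(\gamma)$ shows that the image of any root $\beta$ under this involution is $\theta(\beta)-\sum_i\langle\beta,\ch{\alpha_i}\rangle\alpha_i$; demanding this equal $-\beta_j$ yields the key relation $\beta_j+\theta(\beta_j)=\sum_i\langle\beta_j,\ch{\alpha_i}\rangle\alpha_i$, exhibiting the left side as an element of $\frs\subset\h_a^\theta$. From this I would extract a strongly orthogonal subset of noncompact imaginary roots for $\gamma$ spanning the required $\frw\subset\h_a^\theta$. The target equivalence $c^\frw(\gamma)\sim c_\fru c^\frs(\gamma)$ then follows from Proposition \ref{p:gammarigid}, since both sides are genuine regular characters that share central and infinitesimal characters and lie on $K$-conjugate Cartans (as determined by comparison of the two associated involutions).

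The main obstacle is Case II: producing an explicit strongly orthogonal basis of $\frw$ consisting of actual noncompact imaginary roots for $\gamma$, rather than just vectors in $\h_a^*$. The simply laced hypothesis restricts the relevant Cartan integers to $\{-1,0,1\}$, and the combinatorics of half-sums of orthogonal roots in simply laced systems (types $A$, $D$, $E$) should force the required basis; one may first need to standardize the configuration of $\fru$ relative to $\frs$ using the cross action of Lemma \ref{l:ca} before the basis of $\frw$ can be read off cleanly.
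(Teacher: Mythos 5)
Your proposal reproduces the easy part of the paper's argument but leaves its real content unproved. In the paper the lemma is not proved by analyzing an arbitrary basis root $\beta_j$ of $\fru$: writing $S$ for the orthogonal basis of $\frs$ in $\Delta^{i,-}_{\frac12}(\gamma)_a$, the paper asserts that weak minimality forces
$\Delta^{r,-}_{\frac12}(c^{\frs}(\gamma))_a=S\cup(-S)$,
so that $\fru$ is automatically spanned by a subset $T\subset S$; one then takes $\frw$ to be the span of $S\setminus T$, and the equivalence $c_{\fru}(c^{\frs}(\gamma))\sim c^{\frw}(\gamma)$ follows by cancelling the Cayley transforms along the roots of $T$ using \eqref{e:inverses} (the transforms along orthogonal roots commute). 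In other words, on the paper's route your Case II simply does not arise. Your Case I, together with the observation that no $\beta_j$ can be orthogonal to $\frs$ (which is correct, and is the same use of Proposition \ref{p:gradings} and Definition \ref{d:weaklymin} that underlies the paper's claim), is exactly this mechanism. But your Case II -- basis roots $\beta_j\notin\pm S$ with $\beta_j+\theta(\beta_j)=\sum_i\langle\beta_j,\ch{\alpha_i}\rangle\,\alpha_i\neq 0$ -- is precisely the set of configurations that the paper's intermediate claim excludes from $\Delta^{r,-}_{\frac12}(c^{\frs}(\gamma))_a$, and you do not dispose of it: the passage from that relation to an actual strongly orthogonal set of noncompact imaginary roots of $\gamma$ spanning a suitable $\frw$ is, as you yourself say, only a hope. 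That is a genuine gap, and it is where all the difficulty of the lemma is concentrated; completing your argument amounts to either ruling such $\beta_j$ out of $\Delta^{r,-}_{\frac12}(c^{\frs}(\gamma))_a$ (the paper's claim) or handling them directly, and neither is done.

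A second, smaller problem is your closing appeal to Proposition \ref{p:gammarigid}. Rigidity determines a genuine regular character on a \emph{fixed} Cartan subgroup from $\lambda$ and the central character; it does not by itself produce a $K$-conjugacy between two regular characters attached to different Cartan subgroups, and ``comparison of the two associated involutions'' only shows the involutions look alike, not that a conjugating element of $K$ exists which also matches the $\lambda$-data. In the paper no such argument is needed: once $\fru$ is known to lie in the span of $T\subset S$, the desired equivalence is literally a cancellation of inverse Cayley transforms, where the rigidity statement enters only in the limited way already packaged in \eqref{e:inverses} and Proposition \ref{p:cayley}. So to repair your proof you should replace the Case II sketch and the final rigidity step by a proof (or invocation) of the statement $\Delta^{r,-}_{\frac12}(c^{\frs}(\gamma))_a=S\cup(-S)$ for weakly minimal $\gamma$, which is the form in which the paper uses weak minimality.
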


\begin{proof}
Suppose $\frs$ is the span a set $S$ of orthogonal roots of 
$\Delta^{i,-}_{\frac12}(\gamma)_a$.
Since $\Delta^{r,-}_{\frac12}(\gamma)_a$ is empty, it follows easily
that
$\Delta^{i,-}_{\frac12}(c^{\frs}(\gamma)_a)=S \cup (-S)$.
Therefore $\fru$ is spanned by a subset $T$ of $S$; take $\frw$ to be
the span of the complement of $T$ in $S$.
\end{proof}

\medskip

\noindent{\bf Proof of Proposition \ref{p:weaklyminimal}.}
As in Proposition \ref{p:blocks} we can write
$w\times\gamma\sim c\gamma'$
where $w\in W(\lambda_a)$ and $c$ is a sequence of Cayley transforms. 
Writing $c=c^{\frs_1}\circ c_{\frs_2}\circ\dots\circ c^{\frs_n}$, and
using the lemma repeatedly, we may assume $n=1$ and $c=c^{\frs}$ for
some imaginary admissible subspace $\frs$ for $\gamma$.
Thus $w\times\gamma\sim c^{\frs}(\gamma')$, or 
by \eqref{e:inverses}, $c_{\frs}(w\times\gamma)\sim\gamma'$.
By Lemma \ref{l:ca} $w\inv\frs$ is a real-admissible subspace for $\gamma$,
contradicting the assumption that $\gamma$ is weakly minimal unless
$\frs=0$ and $w\times\gamma\sim\gamma'$, as claimed.
\qed

Recall  $\ol{\scB}$ denotes the set of $K$-orbits in $\scB$,
and fix a weakly minimal element $\gamma$ of $\scB$.
Let $S_i(\gamma)_a$ be the set of subspaces of $\h_a^*$ which are
imaginary admissible subspaces 
for $\gamma$.
If $u\in W(\gamma)_a$ 
(cf.~\ref{e:crossstab-a})
then $u\times\gamma\sim\gamma$, so by 
Lemma \ref{l:ca} if $\frs$ is contained in $S_i(\gamma)_a$ then so is $u\frs$. 
Therefore $W(\gamma)_a$ acts on 
on $S_i(\gamma)_a\times W(\lambda_a)$:
\begin{equation}
\label{e:uaction}
u\cdot(\frs,w)=(u\frs,wu\inv)\quad (u\in W(\gamma)_a).
\end{equation}
The map 
\begin{equation}
\psi_i(\frs,w)=cl(c^{w\frs}(w\times\gamma))\in
\ol\scB
\end{equation}
is easily seen to be well-defined, and factors to $(S_i(\gamma)_a\times
W(\lambda_a))/W(\gamma)_a$. This gives a well-defined map
\begin{subequations}
\label{e:psis}  
\renewcommand{\theequation}{\theparentequation)(\alph{equation}}  
\begin{equation}
\label{e:psii}
\psi_i: (S_i(\gamma)_a\times W(\lambda_a))/W(\gamma)_a\rightarrow \ol\scB.
\end{equation}
If $\gamma$ is a weakly maximal element of $\scB$, define
$S_r(\gamma)_a$ similarly (as real admissible subspaces), and the
analogous map
\begin{equation}
\label{e:psir}
\psi_r: (S_r(\gamma)_a\times W(\lambda_a))/W(\gamma)_a\rightarrow \ol\scB.
\end{equation}
\end{subequations}

\begin{proposition}
\label{p:standardblocks}
If $\gamma$ is a weakly 
minimal (respectively weakly maximal) element of $\scB$, the map of
\eqref{e:psii}
(respectively (b))
is a bijection.
\end{proposition}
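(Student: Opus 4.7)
The plan is to prove surjectivity and injectivity of $\psi_i$ separately; the argument for $\psi_r$ is entirely analogous. For surjectivity, Proposition \ref{p:blocks} presents every class in $\ol\scB$ as the orbit of $\gamma$ under a finite sequence of $K$-conjugations, cross actions by $W(\lambda_a)$, and Cayley transforms. I proceed by induction on the length of such a sequence, maintaining the invariant that every intermediate orbit has the form $cl(c^{w\frs}(w \times \gamma))$ with $\frs \in S_i(\gamma)_a$ and $w \in W(\lambda_a)$. The inductive step combines two tools: Lemma \ref{l:ca} to commute an outermost cross action past the iterated Cayley transform (at the cost of transforming the subspace), and Lemma \ref{lem:weaklyminimal} to rewrite an outermost real Cayley transform applied to $c^{\frs}(\gamma)$ as an iterated imaginary Cayley transform applied directly to $\gamma$, a rewriting valid precisely because $\gamma$ is weakly minimal. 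A new outermost imaginary Cayley transform simply enlarges $\frs$ by an orthogonal root.

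For injectivity, assume $\psi_i(\frs_1, w_1) = \psi_i(\frs_2, w_2)$. Applying Lemma \ref{l:ca} and setting $u = w_2^{-1} w_1 \in W(\lambda_a)$, this translates to $u \times c^{\frs_1}(\gamma) \sim c^{\frs_2}(\gamma)$. The first step is to compare the sets $\Delta^{r,-}_{\frac12}(\cdot)_a$ of real roots satisfying parity on both sides. From the proof of Lemma \ref{lem:weaklyminimal}, $\Delta^{r,-}_{\frac12}(c^{\frs_i}(\gamma))_a = S_i \cup (-S_i)$, where $S_i$ is the orthogonal basis of $\frs_i$ supplied by Lemma \ref{l:cayleybasic}. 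By Proposition \ref{p:gradings} the parity condition in our setting depends only on the integrality structure, hence is $W(\lambda_a)$-equivariant, so $\Delta^{r,-}_{\frac12}(u\times c^{\frs_1}(\gamma))_a = u \cdot (S_1 \cup -S_1)$; the forced equality $u\frs_1 = \frs_2$ follows. The second step uses this equality: by Lemma \ref{l:ca} again, $u \times c^{\frs_1}(\gamma) \sim c^{u\frs_1}(u\times\gamma) = c^{\frs_2}(u\times\gamma)$, and combining with the assumption gives $c^{\frs_2}(u\times\gamma) \sim c^{\frs_2}(\gamma)$; applying the inverse Cayley transform $c_{\frs_2}$ and using \eqref{e:inverses} yields $u \times \gamma \sim \gamma$, i.e., $u \in W(\gamma)_a$. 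A direct check against \eqref{e:uaction} then gives $u \cdot (\frs_1, w_1) = (u\frs_1, w_1 u^{-1}) = (\frs_2, w_2)$.

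The hardest part to justify cleanly will be the transformation rule $\Delta^{r,-}_{\frac12}(u \times \eta)_a = u \cdot \Delta^{r,-}_{\frac12}(\eta)_a$ for $u \in W(\lambda_a)$, which is the crux of forcing $u\frs_1 = \frs_2$ in the injectivity argument. In the linear setting of \cite{ic4} the parity structure depends on more data than $\theta$ and $\lambda$ and must be tracked as part of a strong bigrading; in our simply laced admissible setting Proposition \ref{p:gradings} reduces the rule to the elementary $W(\lambda_a)$-equivariance of half-integrality and integrality. Some care is still needed to reconcile the abstract cross action defined via $\phi_\lambda$ with the concrete transformation of Harish-Chandra parameters, and to verify that $K$-conjugate regular characters indeed yield identical abstract parity sets.
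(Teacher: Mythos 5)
Your proposal is correct in substance and, for injectivity, is essentially the paper's argument in a slightly reorganized form: the paper applies $c_{w'\frs'}$ to the equation $\psi_i(\frs,w)=\psi_i(\frs',w')$ using \eqref{e:inverses} and then invokes Lemma \ref{lem:weaklyminimal} to force the leftover imaginary admissible subspace to be zero (weak minimality), getting $w\frs=w'\frs'$ and $w\times\gamma\sim w'\times\gamma$ at once, whereas you first force $u\frs_1=\frs_2$ by comparing the sets $\Delta^{r,-}_{\frac12}(\cdot)_a$ and then cancel the Cayley transforms; both routes rest on the same fact, namely the description of $\Delta^{r,-}_{\frac12}(c^{\frs}(\gamma))_a$ as $S\cup(-S)$ from the proof of Lemma \ref{lem:weaklyminimal}, and the equivariance you single out as the crux does hold, since by Proposition \ref{p:gradings} the concrete parity set is $\Delta^r_{\frac12}(\lambda)\setminus\Delta^r(\lambda)$ and the cross action moves $\lambda$ only by an element of the root lattice, so the concrete set is unchanged while its pullback to $\h_a^*$ twists by $u$. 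Where you genuinely diverge is surjectivity: the paper's argument is three lines --- Cayley transform $\delta$ down through a maximal real admissible subspace to a weakly minimal element and quote Proposition \ref{p:weaklyminimal} --- while your induction over the generating operations of Proposition \ref{p:blocks} in effect re-proves Proposition \ref{p:weaklyminimal} inline; this is legitimate but duplicative, and it leaves one step unjustified, namely that a further noncompact imaginary Cayley transform of $c^{w\frs}(w\times\gamma)$ ``simply enlarges $\frs$ by an orthogonal root.'' To maintain your invariant there you need that the imaginary roots of $c^{w\frs}(w\times\gamma)$ are exactly the imaginary roots of $w\times\gamma$ orthogonal to $w\frs$ (write the new involution as $\theta$ composed with the product of the reflections in the transformed roots and decompose relative to the span of $w\frs$), together with Proposition \ref{p:gradings} to see that noncompactness is preserved; with that small lemma supplied, the enlarged subspace is imaginary admissible for $w\times\gamma$ and your induction closes, so the proposal matches the paper up to this routine verification and the extra length of the surjectivity argument.
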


\begin{proof}
We only consider the first case, the second is similar. Fix a
weakly minimal element $\gamma\in\scB$.

We first prove injectivity.
It is clear that the cross action of $W_a$
preserves the properties of being weakly minimal or weakly maximal
(for example by Lemma \ref{l:ca}).
Suppose
$\psi_i(\frs,w)=\psi_i(\frs',w')$, so
$c^{w\frs}(w\times\gamma)\sim c^{w'\frs'}(w'\times\gamma)$.
By repeated applications of \eqref{e:inverses} this is equivalent to
\begin{equation}
c_{w'\frs'}c^{w\frs}(w\times\gamma)\sim w'\times\gamma.
\end{equation}
By Lemma \ref{lem:weaklyminimal} 
this gives
\begin{equation}
c^{\frw}(w\times\gamma)\sim w'\times\gamma
\end{equation}
for some imaginary admissible subspace $\frw$ for $\gamma$. 
Then $\frw$ is a real admissible subspace for $w'\times\gamma$. 
This 
contradicts the assumption that $\gamma$ (and therefore
$w'\times\gamma$)
is weakly minimal unless
$\frw=0$, in which case $w\frs=w'\frs'$ and 
$w\times\gamma\sim w'\times\gamma$.
Let $u=(w')\inv w\in W(\lambda_a)$. Then $u\times\gamma\sim\gamma$, 
i.e. $u\in W(\gamma)_a$ (cf.~\ref{e:crossstab-a}).
Then $(\frs',w')=(u\frs,wu\inv)$.
This proves injectivity.

For surjectivity, fix $\delta\in \scB$.
Let $\frs$ be a maximal real admissible subspace for $\delta$, so 
$c_\frs(\delta)$ is weakly minimal.
By Proposition \ref{p:weaklyminimal},
$c_\frs(\delta)\sim w\times\gamma$ for some $w\in W(\lambda_a)$.
Then $\delta\sim\psi_i(w\inv\frs,w)$.
\end{proof}

We next obtain an analogous result for the set $\bfB$ 
of \eqref{e:B}
by replacing $W(\lambda_a)$
with $W_{\frac12}(\lambda_a)$ in \eqref{e:psis}(a) and (b). 
Let $\ol \bfB$ denote the set of $K$-conjugacy classes in $\bfB$.
For $\gamma$ a weakly minimal element of $\bfB$ 
define a map
\begin{subequations}
\renewcommand{\theequation}{\theparentequation)(\alph{equation}}  
\begin{equation}
\label{e:psii2}
\psi_i: (S_i(\gamma)_a\times W_{\frac12}(\lambda_a))/W(\gamma)_a\rightarrow 
\ol \bfB
\end{equation}
taking a representative $(\frs,w)$ on the left-hand side to
$c^{w\frs}(w\times\gamma)$.
If $\gamma$ is weakly maximal define
\begin{equation}
\label{e:psir2}
\psi_r: (S_r(\gamma)_a\times W_{\frac12}(\lambda_a))/W(\gamma)_a\rightarrow 
\ol \bfB
\end{equation}  
similarly.
\end{subequations}

\begin{proposition}
\label{p:standardblocks2}
If $\gamma$ is a weakly minimal (respectively weakly maximal)
element of $\bfB$, the map of \eqref{e:psii2} (respectively
(b)) is a bijection.
\end{proposition}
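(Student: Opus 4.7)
My approach is to reduce Proposition \ref{p:standardblocks2} to Proposition \ref{p:standardblocks} by decomposing $\bfB$ along $W(\lambda_a)$-cosets of blocks. Fix coset representatives $w_1,\dots,w_k$ for $W_{\frac12}(\lambda_a)/W(\lambda_a)$, and let $\scB$ be the block containing the weakly minimal $\gamma$. First I would establish
\[
\bfB \;=\; \bigsqcup_{i=1}^{k}\; w_i \times \scB,
\]
where the union follows from the definition of $\bfB$ together with Corollary \ref{c:bfB}, and the disjointness from the fact that distinct cosets $w_iW(\lambda_a)$ produce blocks with distinct infinitesimal characters (an easy consequence of Lemma \ref{l:crossF}). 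I would then invoke Lemma \ref{l:ca} to check that each $w_i\times\gamma$ is weakly minimal in its block $w_i\times\scB$, so Proposition \ref{p:standardblocks} applies separately to each summand.

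For surjectivity of \eqref{e:psii2}, take $\delta\in\bfB$, choose the $i$ with $\delta\in w_i\times\scB$, and set $\delta_0=w_i^{-1}\times\delta\in\scB$. Proposition \ref{p:standardblocks} supplies $(\frs,v)\in S_i(\gamma)_a\times W(\lambda_a)$ with $cl(\delta_0)=cl(c^{v\frs}(v\times\gamma))$, and one application of Lemma \ref{l:ca} to pull $w_i$ back inside gives
\[
cl(\delta) \;=\; cl\bigl(w_i\times c^{v\frs}(v\times\gamma)\bigr) \;=\; cl\bigl(c^{w_iv\frs}(w_iv\times\gamma)\bigr) \;=\; \psi_i(\frs,\,w_iv),
\]
with $w_iv\in W_{\frac12}(\lambda_a)$ as required.

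For injectivity, suppose $\psi_i(\frs,w)\sim\psi_i(\frs',w')$. Because both sides lie in the same block of $\bfB$, the coset decomposition above forces $u:=(w')^{-1}w\in W(\lambda_a)$. Writing $w=w'u$ and applying Lemma \ref{l:ca} twice to move the common outer factor $w'\times$ outside both Cayley transforms, then cancelling it (since the cross action is a bijection on $K$-orbits), the hypothesis reduces to
\[
c^{u\frs}(u\times\gamma)\;\sim\;c^{\frs'}(\gamma) \quad\text{in }\ol{\scB}.
\]
This is exactly the equality of two values of the map $\psi_i$ of Proposition \ref{p:standardblocks}, namely at $(\frs,u)$ and $(\frs',e)$. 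Its injectivity supplies $u_0\in W(\gamma)_a$ with $(u_0\frs,\,uu_0^{-1})=(\frs',e)$, forcing $u_0=u\in W(\gamma)_a$ and $\frs'=u\frs$; this is precisely $(\frs',w')=u\cdot(\frs,w)$ under the action \eqref{e:uaction}. The weakly maximal case proceeds identically, with $\psi_r$, $c_\frs$, and real admissible subspaces replacing their imaginary counterparts.

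The main delicate point I foresee is the disjointness of the coset decomposition at the level of infinitesimal characters: this is what concentrates all of the injectivity obstruction into the single element $u=(w')^{-1}w\in W(\lambda_a)$ and lets the $W_{\frac12}$-quotient on the domain collapse to a $W(\gamma)_a$-orbit rather than forcing the stabilizer to enlarge to a half-integral version. Once that disjointness is in place, the rest of the argument is purely formal: Lemma \ref{l:ca} transports the question from $\bfB$ into $\scB$, and Proposition \ref{p:standardblocks} finishes it.
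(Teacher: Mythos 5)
Your argument is correct, and it reaches the same essential point as the paper but packages it differently. The paper's proof simply reruns the proof of Proposition \ref{p:standardblocks} with $W_{\frac12}(\lambda_a)$ in place of $W(\lambda_a)$: the only genuinely new step is to check that the cross-stabilizer of $\gamma$ inside $W_{\frac12}(\lambda_a)$ is no larger than $W(\gamma)_a$, which is done by observing (via Lemma \ref{l:crossF}) that $u\cdot\lambda\ne u\lambda$ for $u\in W_{\frac12}(\lambda_a)\setminus W(\lambda_a)$; for surjectivity the paper invokes ``a version of Proposition \ref{p:weaklyminimal} for $\bfB$.'' You instead treat Proposition \ref{p:standardblocks} as a black box: you decompose $\bfB=\bigsqcup_i w_i\times\scB$ over cosets of $W(\lambda_a)$ in $W_{\frac12}(\lambda_a)$, with disjointness coming from the fact that distinct cosets move the infinitesimal character (the same Lemma \ref{l:crossF} input, since Cayley transforms and $K$-conjugacy preserve infinitesimal character), and you transport everything back into $\scB$ by Lemma \ref{l:ca} and cancellation of the cross action on $K$-orbits. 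The two routes rest on the identical key fact, but yours buys a slightly cleaner bookkeeping: the injectivity obstruction is concentrated in a single element $u=(w')^{-1}w\in W(\lambda_a)$ and handled by the already-proved injectivity of the block-level map, and your surjectivity argument needs only the definition of $\bfB$ plus Lemma \ref{l:ca}, so you avoid having to formulate and reprove the $\bfB$-analogue of Proposition \ref{p:weaklyminimal} that the paper asserts without proof. The one point worth stating explicitly in your write-up is that the abstract cross action is a (left) action compatible with $K$-conjugacy and that Cayley transforms preserve infinitesimal character, since both are used when you identify which coset piece $\psi_i(\frs,w)$ lies in and when you cancel the outer factor $w'\times$.
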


\begin{proof}
The proof is essentially the same as that of Proposition
\ref{p:standardblocks}.  
For injectivity (see the proof of Proposition \ref{p:standardblocks}) 
we need that if $u\in W_{\frac12}(\lambda_a)\cap W(G,H)_a$
satisfies $u\times\gamma=u\gamma$ then $u\in W(\gamma)
=\{w\in W(\lambda_a)\cap W(G,H)_a\,|\, w\times\gamma=\gamma\}$
(cf.~\ref{e:crossstab1}).
This is almost obvious, except that $u\in W_{\frac12}(\lambda_a)$, and
not necessarily $W(\lambda_a)$. It suffices to show that if $u\in
W_{\frac12}(\lambda_a)\bs W(\lambda_a)$ then
$u\times\gamma\ne u\gamma$.
Write $\gamma=(\tH,\Gamma,\lambda)$.
The left hand side is of the form
$(\tH,*,u\cdot\lambda)$, the right hand side is of the form
$(\tH,*,u\lambda)$.
The result is now immediate from Lemma \ref{l:crossF}, which says that
$u\cdot\lambda\ne u\lambda$.

Surjectivity also follows as in Proposition \ref{p:standardblocks},
using a version of Proposition \ref{p:weaklyminimal} for $\bfB$.
\end{proof}

Let $\g_d$ be the derived algebra of $\g$, and for $\h$ a Cartan
subalgebra let $\h_d=\h\cap\g_d$.

\begin{theorem}
\label{t:gendual}
Recall $\tG$ is an admissible two-fold cover of a simply laced, real
reductive linear group.
Let $\b$
be a block of genuine representations of $\tG$ with regular
infinitesimal character.  Fix a weakly minimal element
$\gamma=(H,\Gamma,\lambda)$ such that $\irr(\gamma) \in \b$.
Suppose we are given:

\medskip

\noindent (a)
an admissible cover $\wt G'$ of a simply laced,
real reductive linear group $G'$;

\smallskip

\noindent (b)
a genuine regular character $\gamma'  = (H', \Gamma', \lam')$ of
$\wt G'$.

\smallskip

Let 
$$\Delta(\lambda')=\{\alpha\in\Delta(\g',\h')\,|\,\langle\lambda',\ch\alpha\rangle\in\Z\}$$
and
$$\Delta_{\frac12}(\lambda')=\{\alpha\in\Delta(\g',\h')\,|\,\langle2\lambda',\ch\alpha\rangle\in\Z\}.$$ 
Let $\theta$ and $\theta'$ denote the Cartan involutions of $\g$ and $\g'$
respectively.
We also assume we are given

\smallskip
\noindent (c)
an isomorphism $\phi$ from $\h'_d$ to
$\C\langle\Delta_{\frac12}(\lambda)^\vee\rangle\subset\h$
satisfying
\begin{subequations}
\renewcommand{\theequation}{\theparentequation)(\alph{equation}}
\label{e:assumptions}
\begin{equation}
\label{e:phitheta}
\phi(\theta'(X'))=-\theta(\phi(X'))\quad\text{for all }X'\in \h'_d,
\end{equation}
\begin{equation}
\label{e:phiWmatch}
\phi^*(\Delta(\lam))=\Delta(\lambda'),
\end{equation}
\begin{equation}
\label{e:phiWmatch2}
\phi^*(\Delta_{\frac12}(\lam))=\Delta_{\frac12}(\lambda').
\end{equation}
\end{subequations}
Let $\b'=\b(\irr(\gamma'))$.
Then  $\b'$ is dual to $\b$
(Definition \ref{d:vogandual}).
\end{theorem}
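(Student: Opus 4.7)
The strategy is to reduce the assertion to an inverse–transpose identity of extended Hecke modules: first exploit the hypotheses on $\phi$ to identify combinatorial structure, then use Proposition \ref{p:standardblocks2} to build the bijection $\Phi$, and finally invoke the extended Hecke algebra formalism of \cite{rt3} to upgrade $\Phi$ to the multiplicity identities of Definition \ref{d:vogandual}.

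The first observation is that conditions \eqref{e:phitheta}--\eqref{e:phiWmatch2} identify the weak bigrading $g_{\frac12}(\gamma)$ with the dual bigrading $g^\vee_{\frac12}(\gamma')$ of \eqref{e:bigrading}: \eqref{e:phitheta} swaps real and imaginary roots, \eqref{e:phiWmatch}--\eqref{e:phiWmatch2} match the integral and half-integral root systems, and Proposition \ref{p:gradings} pins down the gradings themselves in terms of integrality. In particular $\gamma$ is weakly minimal if and only if $\gamma'$ is weakly maximal. Applying Corollary \ref{c:bfB} and Proposition \ref{p:standardblocks2} on both sides then gives bijections
\[
\psi_i: (S_i(\gamma)_a \times W_{\frac12}(\lambda_a))/W(\gamma)_a \longrightarrow \ol\bfB, \qquad \psi_r': (S_r(\gamma')_a \times W_{\frac12}(\lambda'_a))/W(\gamma')_a \longrightarrow \ol\bfB'.
\]
Under $\phi$ the imaginary admissible subspaces of $\gamma$ correspond to the real admissible subspaces of $\gamma'$ (Lemma \ref{l:cayleybasic} together with the real/imaginary interchange), $W_{\frac12}(\lambda_a) \simeq W_{\frac12}(\lambda'_a)$, and Proposition \ref{p:crossstab} matches the cross-stabilizer decompositions $W^C(\lambda_a)^\theta \ltimes (W^i(\lambda_a) \times W^r(\lambda_a))$ and $W^C(\lambda'_a)^{\theta'} \ltimes (W^i(\lambda'_a) \times W^r(\lambda'_a))$, the central $C$-factor depending only on the integral Weyl group. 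This yields a bijection $\Phi: \ol\bfB \to \ol\bfB'$ that restricts to $\Phi: \ol\scB \to \ol\scB'$ because $\ol\scB$ corresponds to the subset where the Weyl element lies in $W(\lambda_a)$, and $\phi$ identifies $W(\lambda_a)$ with $W(\lambda'_a)$.

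The main obstacle is upgrading this bijection to the multiplicity identities. I would follow the route outlined in Remark \ref{r:hecke}, working with the extended Hecke modules of \cite{rt3}. Their module structure is assembled from the cross action, Cayley transforms, and the extended integral length of Remark \ref{r:length}; by Proposition \ref{p:gradings}, identity \eqref{e:w1s}, and Proposition \ref{p:crossstab}, all of these ingredients are governed entirely by the weak bigrading in the admissible simply laced setting---this is precisely where \cite{ic4}'s strong bigradings collapse to weak ones. Hence $\Phi$ should induce a sign-twisted isomorphism between the extended Hecke module attached to $\bfB$ and the linear dual of the one attached to $\bfB'$, with the parity sign $\epsilon(\gamma,\delta) = (-1)^{l(\gamma)+l(\delta)}$. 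The inverse–transpose argument of \cite[Proposition 13.12]{ic4}, transported to the extended setting via the algorithm of \cite[Part I]{rt3}, then yields the identities \eqref{e:Mm} with the required signs, proving $\b'$ is dual to $\b$. The delicate technical point is verifying that the Cayley transform operators are genuinely intertwined by $\Phi$ at the Hecke-module level, not merely at the level of parameter sets; this requires careful tracking of the signs $\epsilon_\alpha$ of \eqref{e:parity} through $\phi$, and Proposition \ref{p:gradings} is indispensable here because it eliminates the ambiguity present in the linear case.
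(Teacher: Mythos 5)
Your proposal follows essentially the same route as the paper: the hypotheses on $\phi$ are used exactly as in Propositions \ref{p:standardbij} and \ref{p:standardbij2} to produce the parameter bijection $\ol\bfB\rightarrow\ol\bfB'$ compatible with cross action and Cayley transforms, and the duality is then deduced by showing the sign-twisted map $\mu_\gamma\mapsto(-1)^{l(\gamma)}m'_{\gamma'}$ from $\caM^*$ to $\caM'$ is an isomorphism of modules for the extended Hecke algebra of \cite{rt3}, formally as in \cite[Theorem 13.13]{ic4} and \cite[Theorem 11.1]{rt3}. The "delicate point" you flag about intertwining the Cayley transform operators at the Hecke-module level is precisely what the paper disposes of by the symmetry properties (a)--(c) of Proposition \ref{p:standardbij2}, so your outline matches the published argument.
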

\medskip

Note that \eqref{e:assumptions} implies that
the the bigradings $\ghalf(\gamma)$ and
$\ghalf(\gamma')$ of Definition \ref{d:big} are dual.
The key point is that this is enough to  imply $\b'$ is dual to $\b$. 

Before turning to the proof of the theorem, we define the
bijection $\Phi:\b\rightarrow\b'$.
By \eqref{e:assumptions}(a) and (b), we obtain
a bijection
\begin{equation}
\Delta^i(\lambda)\leftrightarrow \Delta^r(\lambda').
\end{equation}
By  Proposition \ref{p:gradings},
$\Delta^{i,+}_{\frac12}(\gamma)=\Delta^i(\lambda)$ 
and
$\Delta^{r,+}_{\frac12}(\gamma')=\Delta^r(\lambda')$, so there is a bijection
\begin{equation}
\label{e:bijofroots}
\Delta^{i,-}_{\frac12}(\gamma)\leftrightarrow
\Delta^{r,-}_{\frac12}(\gamma').
\end{equation}
Fix an abstract Cartan subalgebra $\h_a$ of $\g$, and 
$\lambda_a\in\h_a^*$ giving the infinitesimal character of
$\caB$. Choose $\h'_a$ and $\lambda'_a$ for $\g'$ similarly. 
Pulling \eqref{e:bijofroots} back to $\h_a$ and $\h'_a$ as usual we
obtain a bijection
\begin{equation}
\Delta^{i,-}_{\frac12}(\gamma)_a\leftrightarrow
\Delta^{r,-}_{\frac12}(\gamma')_a.
\end{equation}
By Lemma \ref{l:cayleybasic} we obtain a bijection
\begin{equation}
\label{e:sbij}
S_i(\gamma)_a
\leftrightarrow S_r(\gamma')_a
\end{equation}
which we will denote by 
$\frs \mapsto \frs'$.

Pulling back to $\h_a$ and $\h'_a$, \eqref{e:assumptions}(b) gives bijections
\begin{equation}
\label{e:wisom}
\Delta(\lambda_a)\simeq \Delta(\lambda'_a),\quad
W(\lambda_a)\simeq W_a(\lambda_a'),
\end{equation}
which we denote
\begin{equation}
\label{e:deltaisom}
\alpha\mapsto\alpha',\quad w\mapsto w'.
\end{equation}

It is clear from \eqref{e:assumptions}(a) that the isomorphism 
$W(\lambda_a)\simeq W(\lambda_a')$
interchanges
$W^i(\lambda_a)$ and $W^r(\lambda'_a)$,
$W^r(\lambda_a)$ and $W^i(\lambda_a')$, and takes $W^C(\lambda_a)^\theta$ to
$W^C(\lambda'_a)^{\theta'}$.
Thus 
Proposition \ref{p:crossstab} implies
$W(\gamma)_a\simeq W(\gamma')_a$.
Therefore there is a natural isomorphism
\begin{equation}
\label{e:isom}
(S_i(\gamma)_a\times W(\lambda_a))/W(\gamma)_a\leftrightarrow
(S_r(\gamma)_a\times W(\lambda_a))/W_a(\gamma').
\end{equation}

The following result is an immediate consequence of this and
Proposition \ref{p:standardblocks}.  Write $\psi_i,\psi'_r$ for the
maps of \eqref{e:psis} applied to $\ol\scB$ and $\ol\scB'$, respectively.

\begin{proposition}
\label{p:standardbij}
In the 
setting  of Theorem \ref{t:gendual},
let $\scB=\scB(\gamma), \scB'=\scB(\gamma')$.
Recall (Section \ref{s:regularCharacters}) 
$\ol\scB=\scB/K$, and $\ol\scB'$ similarly. 
Using  bijections  \eqref{e:sbij} and \eqref{e:deltaisom},
the map
\begin{align*}
\Psi \; : \; \ol\scB &\longrightarrow \ol\scB' \\
\psi_i(\frs,w) &\mapsto \psi'_r(\frs',w')
\end{align*}
is a bijection.  By Theorem \ref{t:lang} this gives
a bijection $\Phi : \caB \rightarrow \caB'$.
Furthermore $\Psi$ commutes with the cross action and Cayley
transforms in the following sense. 
Fix $\delta\in\scB$.
\begin{enumerate}
\item[(a)]
$\Psi(c^\alpha(cl(\delta)))=
c_{\alpha'}(\Psi(cl(\delta)))$ for all $\alpha\in\Delta^{i,-}_{\frac12}(\delta)$,

\item[(b)]
$\Psi(c_\alpha(cl(\delta)))=c^{\alpha'}(\Psi(cl(\delta)))$
for all $\alpha\in\Delta^{r,-}_{\frac12}(\delta)$,

\item[(c)]
$\Psi(cl(w\times\delta))=cl(w'\times\Psi(cl(\delta)))$ for all $w\in
W_a(\lambda)$. 
\end{enumerate}
\end{proposition}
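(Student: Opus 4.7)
The plan is to construct $\Psi$ by composing $\psi_i^{-1}$, the natural isomorphism \eqref{e:isom}, and $\psi'_r$, and then check the commutations (a)--(c) by working with explicit $(\frs,x)$-representatives and repeatedly applying Lemma \ref{l:ca}.

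First I would verify that $\gamma'$ is weakly maximal so that $\psi'_r$ is available: by Proposition \ref{p:gradings}, weak minimality of $\gamma$ is equivalent to $\Delta^r_{\frac12}(\lambda)=\Delta^r(\lambda)$, and the hypotheses \eqref{e:phitheta}--\eqref{e:phiWmatch2} translate this under $\phi$ to $\Delta^i_{\frac12}(\lambda')=\Delta^i(\lambda')$, which by Proposition \ref{p:gradings} again is weak maximality of $\gamma'$. Proposition \ref{p:standardblocks} then supplies bijections $\psi_i$ and $\psi'_r$. To close the square, I need \eqref{e:isom} to be legitimate: the bijection $\frs\mapsto\frs'$ on admissible subspaces is \eqref{e:sbij}, the group isomorphism $w\mapsto w'$ is \eqref{e:wisom}, and the identification $W(\gamma)_a\simeq W(\gamma')_a$ follows from Proposition \ref{p:crossstab} once I observe that $\phi$ interchanges $W^i(\lambda_a)\leftrightarrow W^r(\lambda'_a)$ and $W^r(\lambda_a)\leftrightarrow W^i(\lambda'_a)$ (using \eqref{e:phitheta}) and carries $W^C(\lambda_a)^\theta\leftrightarrow W^C(\lambda'_a)^{\theta'}$. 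The promotion to $\Phi:\caB\to\caB'$ is then immediate from Theorem \ref{t:lang}.

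For the commutations, fix $cl(\delta)=\psi_i(\frs,x)$, so $\delta\sim c^{x\frs}(x\times\gamma)$. Case (c) follows directly from Lemma \ref{l:ca}, which lets me move $w\times$ past the Cayley transform to rewrite $cl(w\times\delta)=\psi_i(\frs,wx)$; the mirror calculation on the $\gamma'$ side then shows this maps to $\psi'_r(\frs',w'x')=cl(w'\times\Psi(cl(\delta)))$ under $\Psi$. For case (a) with $\alpha\in\Delta^{i,-}_{\frac12}(\delta)_a$, the key point is that $\wt\frs:=\frs\oplus\C\{x^{-1}\alpha\}$ lies in $S_i(\gamma)_a$, so that $c^\alpha(cl(\delta))=\psi_i(\wt\frs,x)$. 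This requires (i) $x^{-1}\alpha\in\Delta^{i,-}_{\frac12}(\gamma)_a$, which holds because the cross action preserves imaginary type and half-integrality (together with Proposition \ref{p:gradings}), and (ii) orthogonality of $x^{-1}\alpha$ to the basis of $\frs$, which is automatic: $\alpha$ is imaginary for $\delta$ while the basis of $x\frs$ is real for $\delta$, and imaginary and real roots in any $\theta$-stable root system are orthogonal. Applying $\Psi$ then gives $\psi'_r(\wt\frs',x')=\psi'_r(\frs'\oplus\C\{(x')^{-1}\alpha'\},x')$, which matches $c_{\alpha'}(\Psi(cl(\delta)))$ by the dual calculation. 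Case (b) is the mirror argument.

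The main obstacle will be the bookkeeping for the extension step in (a) and (b): one must confirm not only that $x^{-1}\alpha$ belongs to $\Delta^{i,-}_{\frac12}(\gamma)_a$ (rather than merely $\Delta^i(\lambda_a)$), but also that the bijection \eqref{e:sbij} is natural with respect to adding a single root to an admissible subspace. Both are ultimately consequences of Proposition \ref{p:gradings}, which in the admissible simply laced setting pins down the bigrading from Lie-algebra data alone and thereby allows the weak bigrading hypotheses \eqref{e:phitheta}--\eqref{e:phiWmatch2} to carry the entire burden of the duality.
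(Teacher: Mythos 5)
Your proposal is correct and follows the same route the paper intends: the paper simply declares Proposition \ref{p:standardbij} an immediate consequence of the isomorphism \eqref{e:isom} together with Proposition \ref{p:standardblocks}, which is exactly your composition $\psi'_r\circ\eqref{e:isom}\circ\psi_i^{-1}$. Your added verifications (that the hypotheses of Theorem \ref{t:gendual} force $\gamma'$ to be weakly maximal, and the Lemma \ref{l:ca}/Lemma \ref{l:cayleybasic} bookkeeping for adjoining a single root in (a) and (b)) are just the details the paper leaves implicit.
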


The proposition establishes most of the key properties needed to prove
that $\Psi$ satisfies the conditions of Definition \ref{d:vogandual}. The
Kazhdan-Lusztig algorithm of \cite{rt3} 
does not work within a fixed block of representations, but
instead with the larger set $\bfB$ of \eqref{e:B}.
So we need to extend the bijection of Proposition
\ref{p:standardbij}.

By \eqref{e:assumptions}(c) $W_{\frac12}(\lambda)\simeq
W_{\frac12}(\lambda')$.
With $\lambda_a$ and $\lambda'_a$ as in the discussion following the
theorem we also have
$W_{\frac12}(\lambda_a)\simeq
W_{\frac12}(\lambda'_a)$. 
Choose $\caF$ containing $\lambda_a$ as  in Section \ref{s:ca}.
Use this to define the
cross action of $W_a$, and to define $\bfB$ as in \eqref{e:B}.
Choose $\caF'$ for $\g'$, and define $\bfB'$, similarly.
The analogue of Proposition \ref{p:standardbij} for $\bfB$ follows
from Proposition \ref{p:standardblocks2}.

\begin{proposition}
\label{p:standardbij2}
Retain the hypotheses and notation of Theorem \ref{t:gendual}. 
With $\ol \bfB$ and $\ol\bfB'$ denoting $K$-conjugacy classes
in $\bfB$ and $\bfB'$ as above,
there is a bijection
\[
\Psi \; : \; \ol \bfB \longrightarrow \ol \bfB'
\]
extending the bijection of Proposition \ref{p:standardbij}.
Furthermore properties (a-c) of Proposition \ref{p:standardbij} hold,
with $W_{\frac12}(\lambda_a)$ in place of  $W(\lambda_a)$ in (c).
\end{proposition}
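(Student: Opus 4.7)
The plan is to repeat the construction of $\Psi$ from Proposition \ref{p:standardbij}, but using Proposition \ref{p:standardblocks2} in place of Proposition \ref{p:standardblocks}. Since $\gamma$ is weakly minimal in $\scB\subset\bfB$ (it is also weakly minimal in $\bfB$, since weak minimality only refers to the root system $\Delta_{\frac12}^{r,-}(\gamma)$, which does not change when passing to $\bfB$), we may apply Proposition \ref{p:standardblocks2} to parametrize $\ol\bfB$ as $(S_i(\gamma)_a\times W_{\frac12}(\lambda_a))/W(\gamma)_a$. Similarly, the image $\Phi(\gamma)=\gamma'$ is weakly maximal in $\scB'$ (and hence in $\bfB'$), so Proposition \ref{p:standardblocks2} parametrizes $\ol\bfB'$ as $(S_r(\gamma')_a\times W_{\frac12}(\lambda'_a))/W(\gamma')_a$.

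Next I would assemble the bijection. Hypothesis \eqref{e:phiWmatch2} provides an isomorphism $W_{\frac12}(\lambda_a)\simeq W_{\frac12}(\lambda'_a)$ extending $W(\lambda_a)\simeq W(\lambda'_a)$, which I continue to write as $w\mapsto w'$. The bijection \eqref{e:sbij} on admissible subspaces and the identification $W(\gamma)_a\simeq W(\gamma')_a$ (via Proposition \ref{p:crossstab}, already established in the discussion preceding Proposition \ref{p:standardbij}) extend without change, since they depend only on the integral data. One then defines
\[
\Psi \; : \; \ol\bfB\longrightarrow\ol\bfB',\qquad \psi_i(\frs,w)\longmapsto \psi_r'(\frs',w'),
\]
and checks it is well defined on $W(\gamma)_a$-orbits exactly as in \eqref{e:uaction}. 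This is a bijection by Proposition \ref{p:standardblocks2} applied to both sides, and manifestly restricts to the bijection of Proposition \ref{p:standardbij} on $\ol\scB$ (those classes represented by $(\frs,w)$ with $w\in W(\lambda_a)$).

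Finally I would verify the equivariance properties (a)--(c). Properties (a) and (b) are purely statements about the Cayley transforms $c^\alpha, c_\alpha$ attached to integral imaginary or real roots $\alpha$ of some $\delta\in\bfB$; the verification is identical to the proof for Proposition \ref{p:standardbij}, since Lemma \ref{l:ca} and Definitions \ref{d:cayleyr}, \ref{d:cayleyi} are insensitive to which element of $\bfB$ is being acted on. For (c) with $w\in W_{\frac12}(\lambda_a)$, the same calculation used in Proposition \ref{p:standardbij} applies, now using the enlarged isomorphism $W_{\frac12}(\lambda_a)\simeq W_{\frac12}(\lambda'_a)$ afforded by \eqref{e:phiWmatch2}; the relation $\Psi(cl(w\times\delta))=cl(w'\times\Psi(cl(\delta)))$ then follows by writing $cl(\delta)=\psi_i(\frs,v)$ and computing $w\times\psi_i(\frs,v)=\psi_i(\frs,wv)$ directly from the definition of $\psi_i$.

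The main obstacle is checking that $W(\gamma)_a$ acts correctly through the enlarged isomorphism $W_{\frac12}(\lambda_a)\simeq W_{\frac12}(\lambda'_a)$, i.e.\ that the image of $W(\gamma)_a$ under this isomorphism lands in $W(\gamma')_a$. This is ensured by the argument used for injectivity in Proposition \ref{p:standardblocks2}: any $u\in W_{\frac12}(\lambda_a)\cap W(G,H)_a$ satisfying $u\times\gamma\sim\gamma$ must in fact lie in $W(\lambda_a)$, since otherwise Lemma \ref{l:crossF} would force $u\cdot\lambda\ne u\lambda$, contradicting Proposition \ref{p:gammarigid} applied to $\gamma$ and $u\times\gamma$. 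Thus the stabilizer inside the larger group coincides with $W(\gamma)_a$, and the same holds on the dual side, so the quotient bijection passes through cleanly.
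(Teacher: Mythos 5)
Your proposal is correct and follows essentially the same route as the paper: the paper simply notes that the extended bijection follows by applying Proposition \ref{p:standardblocks2} (in place of Proposition \ref{p:standardblocks}) to both $\ol\bfB$ and $\ol\bfB'$, together with the identifications $W_{\frac12}(\lambda_a)\simeq W_{\frac12}(\lambda'_a)$ from \eqref{e:phiWmatch2}, the subspace bijection \eqref{e:sbij}, and $W(\gamma)_a\simeq W(\gamma')_a$, with properties (a)--(c) verified exactly as before. Your closing observation that the cross stabilizer inside $W_{\frac12}(\lambda_a)$ is no larger than $W(\gamma)_a$ is precisely the injectivity point already handled in the proof of Proposition \ref{p:standardblocks2} via Lemma \ref{l:crossF}.
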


\begin{proof}[Proof of Theorem \ref{t:gendual}]
We are now in a position to work within the extended Hecke algebra
formalism of \cite[Section 9]{rt3}.  Proposition 9.5
of \cite{rt3} defines an algebra $\caH$ over the ring of formal
Laurent polynomials $\bbZ[q,q^{-1}]$ whose structure depends
only on $\Delta(\lam)$ and $\Delta_\frac12(\lam)$.  (By definition,
$\caH$ contains the Hecke
algebra of the integral Weyl group $W(\lam)$.)
The operators defined in
\cite[Definition 9.4]{rt3}, generalizing those of \cite[Definition 12.4]{ic4}, 
give rise to an $\caH$ module $\caM$ with a $\bbZ[q,q^{-1}]$ basis 
$\{m_\gamma \; | \; \gamma \in \ol \bfB\}$.
Since $\Delta(\lam)$ and $\Delta_\frac12(\lam)$ identify with
$\Delta(\lam')$ and $\Delta_\frac12(\lam')$ by the hypotheses of \eqref{e:phiWmatch} and
\eqref{e:phiWmatch2}, we also
obtain an $\caH$ module $\caM'$ with
$\bbZ[q,q^{-1}]$ basis $\{m'_{\gamma'} \; | \; \gamma' \in \ol \bfB'\}$.

We define an $\caH$ 
module structure on $\caM^*$, the $\bbZ[q,q^{-1}]$ linear dual of $\caM$,
as in \cite[Definition 13.3]{ic4} or
\cite[Equation 11.3]{rt3}.  
(Since $\caH$ is nonabelian, a little care is
required.)
The dual module $\caM^*$ is equipped
with the basis $\{\mu_{\gamma}\}$ dual to $\{m_\gamma\}$.
Write the bijection
$\Psi: \ol \bfB \rightarrow \ol \bfB'$ 
of Proposition \ref{p:standardbij2} as $\gamma \mapsto \gamma'$.  It
thus gives a $\bbZ[q,q^{-1}]$-linear isomorphism
\begin{align}
\label{e:heckemap}
\caM^* &\longrightarrow \caM' \\
\mu_{\gamma} & \longrightarrow (-1)^{l(\gamma)}m'_{\gamma'},
\end{align}
where $l(\gamma)$ is described in Remark \ref{r:length}.
As in the proofs of \cite[Theorem 13.13]{ic4} and \cite[Theorem
11.1]{rt3}, the existence of the duality
between elements of $\b$ and $\b'$ follows from the assertion that the
map of \eqref{e:heckemap} is in fact an $\caH$ module
isomorphism.  Again like the proofs of \cite[Theorem 13.13]{ic4} and
\cite[Theorem 11.1]{rt3}, this follows formally from the definition of
the $\caH$-module structure given in \cite[Definition 9.4]{rt3} and
the key symmetry properties summarized in parts (a)--(c)
of Proposition \ref{p:standardbij2}.
(As an example of the formal calculations involved
one may consult the proof of \cite[Theorem 11.1]{rt3}.)  This completes
the proof.
\end{proof}

\medskip

For use in the next section we need the concept of isomorphism of
blocks. 

Suppose $\tG$ is a two-fold cover of a real reductive linear group, and
$\caB$ is a block of genuine representations of $\tG$ with regular
infinitesimal character. Let $\scB$ be the corresponding block of
genuine regular characters. 
Fix a family $\caF$ as in Section \ref{s:F},
and use it to define $\bfB$ as in \eqref{e:B}.
As in the proof of Theorem \ref{t:gendual}, the set  $\ol{\bfB}$
of 
$K$-orbits 
in $\bfB$ index
a basis of a module $\caM$ for the extended Hecke algebra $\caH$.
It is easy to see that that the structure of $\caM$ as
a based module for $\caH$ does not depend on the choice of $\caF$.

Now suppose $\caB_1,\caB_2$ are blocks of genuine representations of
groups $\tG_1,\tG_2$ as in the preceding paragraph.
Write $\lambda_i$ for the infinitesimal character of $\caB_i$, and
assume we are given an isomorphism $\Delta_{\frac12}(\lambda_1)\simeq
\Delta_{\frac12}(\lambda_2)$ taking
$\Delta(\lambda_1)$ to 
$\Delta(\lambda_2)$, and $\caH_1\simeq \caH_2$.
For $i=1,2$ let $\bfB_i$, $\caM_i$ and $\caH_i$ be as above.

\begin{definitionplain}
\label{d:blockisom}

We say $\caB_1$ is {\it isomorphic} to $\caB_2$ if
there is a bijection $\ol{\bfB}_1\rightarrow \ol{\bfB}_2$ which induces
an isomorphism $\caM_1\simeq \caM_2$ as $\caH_1\simeq\caH_2$ modules.
\end{definitionplain}

As in the discussion after \eqref{e:heckemap} an isomorphism of blocks
preserves multiplicity matrices:

\begin{lemma}
\label{l:isommult}
Suppose $\scB_1\simeq\scB_2$.
Writing the isomorphism
$\gamma_1\rightarrow\gamma_2$, for all $\delta_1,\gamma_1\in\scB_1$ we have:
\begin{equation}
\begin{aligned}
m(\delta_1,\gamma_1)&=m(\delta_2,\gamma_2),\\
M(\delta_1,\gamma_1)&=M(\delta_2,\gamma_2).\\
\end{aligned}
\end{equation}
\end{lemma}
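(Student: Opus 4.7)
The plan is to observe that the multiplicity matrices $m(\delta,\gamma)$ and $M(\delta,\gamma)$ of \eqref{e:Mm} are completely determined by the structure of $\caM$ as a based module for the extended Hecke algebra $\caH$. Concretely, by the main results of \cite[Part I]{rt3}, the Kazhdan-Lusztig-Vogan type algorithm constructs, purely from the $\caH$-module operations of \cite[Definition 9.4]{rt3} acting on the basis $\{m_\gamma\,|\,\gamma \in \ol\bfB\}$, a canonical basis whose expansion in the standard basis gives (up to sign $(-1)^{l(\gamma)+l(\delta)}$) the entries $M(\delta,\gamma)$; the matrix $m(\delta,\gamma)$ is then its signed inverse. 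Thus the entire matrix pair $(m,M)$ is an invariant of the based $\caH$-module $\caM$.

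Next I would unpack what the hypothesis $\scB_1 \simeq \scB_2$ supplies. By Definition \ref{d:blockisom} we have a bijection $\ol\bfB_1 \to \ol\bfB_2$, say $\gamma_1 \mapsto \gamma_2$, and an isomorphism $\caH_1 \simeq \caH_2$, such that the induced $\bbZ[q,q^{-1}]$-linear map $\caM_1 \to \caM_2$ sending $m_{\gamma_1} \mapsto m_{\gamma_2}$ intertwines the two $\caH$-actions. In particular, this bijection restricts to a bijection between the sub-bases indexed by $\ol\scB_1$ and $\ol\scB_2$, and sends standard basis elements to standard basis elements.

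From here the argument is essentially tautological: the canonical basis of $\caM_1$ constructed from the $\caH_1$-module operations must be carried to the canonical basis of $\caM_2$, because the construction is expressed purely in terms of the module operations and the standard basis that are matched by the isomorphism. Moreover the length function $l$ on $\ol\scB$ used in the sign convention is also determined by the module structure (see \cite[Section 9]{rt3} and Remark \ref{r:length}), so it is preserved by the isomorphism. Comparing coefficients on both sides then yields $M(\delta_1,\gamma_1)=M(\delta_2,\gamma_2)$; inverting gives the corresponding equality for $m$. The main (only) obstacle is bookkeeping: one must verify that every ingredient entering the Kazhdan-Lusztig-Vogan recursion of \cite{rt3}, including the length function and the standard basis, really is intrinsic to the based $\caH$-module $\caM$ and not to auxiliary choices such as $\caF$, so that the hypothesis of Definition \ref{d:blockisom} is truly enough to transport the algorithm verbatim from $\scB_1$ to $\scB_2$.
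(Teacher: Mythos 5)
Your argument is correct and is essentially the paper's own: the paper gives no separate proof beyond the remark that, as in the discussion following \eqref{e:heckemap}, the multiplicity matrices are computed by the Kazhdan--Lusztig algorithm of \cite{rt3} from the structure of $\caM$ as a based module for the extended Hecke algebra, hence are preserved by an isomorphism in the sense of Definition \ref{d:blockisom}. Your bookkeeping caveat (independence of $\caF$, intrinsic nature of the length function and standard basis) is exactly the point the paper disposes of in the sentence preceding Definition \ref{d:blockisom} and via Remark \ref{r:length}.
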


As in the proof of Theorem \ref{t:gendual}, rigidity of genuine
representations gives the following result.

\begin{proposition}
\label{p:isomorphic}
In the setting of Definition \ref{d:blockisom},
suppose $\gamma_i\in\scB_i$ are weakly minimal, and
$\ghalf(\gamma_1)\simeq \ghalf(\gamma_2)$ (Definition 
\ref{d:big}).
Then $\b_1$ is isomorphic to $\b_2$ in the sense of Definition
\ref{d:blockisom}.
\end{proposition}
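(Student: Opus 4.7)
The plan is to mirror the proof of Theorem \ref{t:gendual}, but now with both blocks playing the role of ``source'' data.  Since the weakly minimal representatives $\gamma_1,\gamma_2$ have isomorphic bigradings, and since by \eqref{e:bigrading} the bigrading records exactly the data
$(\theta,\Delta^i(\lambda),\Delta^r(\lambda))$ inside $\Delta_{\frac12}(\lambda)$, the hypothesis gives an isomorphism of root systems $\Delta_{\frac12}(\lambda_1)\simeq \Delta_{\frac12}(\lambda_2)$ carrying $\Delta(\lambda_1)$ to $\Delta(\lambda_2)$ and intertwining the two Cartan involutions, and matching imaginary and real roots on the nose.  This is the analogue of \eqref{e:assumptions} in the setting of Theorem \ref{t:gendual}; in particular it induces isomorphisms $W(\lambda_1)\simeq W(\lambda_2)$ and $W_{\frac12}(\lambda_1)\simeq W_{\frac12}(\lambda_2)$, together with a bijection of imaginary-admissible subspaces $S_i(\gamma_1)_a\leftrightarrow S_i(\gamma_2)_a$ via Lemma \ref{l:cayleybasic}.

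Next I would check that the cross-stabilizers correspond.  By Proposition \ref{p:crossstab}, $W(\gamma_j)_a = W^C(\lambda_j)^{\theta}\ltimes (W^i(\lambda_j)\times W^r(\lambda_j))$, and each factor is manifestly determined by the $(\Delta(\lambda),\theta)$ part of the bigrading; hence the isomorphism above restricts to $W(\gamma_1)_a\simeq W(\gamma_2)_a$.  Applying the parametrization of Proposition \ref{p:standardblocks2} at both $\gamma_1$ and $\gamma_2$ (for the enlarged sets $\bfB_j$) then yields a bijection
\[
\ol\bfB_1\;\longleftrightarrow\;(S_i(\gamma_1)_a\times W_{\frac12}(\lambda_a))/W(\gamma_1)_a\;\longleftrightarrow\;(S_i(\gamma_2)_a\times W_{\frac12}(\lambda_a))/W(\gamma_2)_a\;\longleftrightarrow\;\ol\bfB_2,
\]
which I will denote $\gamma\mapsto\gamma'$.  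Exactly as in Proposition \ref{p:standardbij2}, this bijection commutes with $K$-conjugacy, with the cross action of $W_{\frac12}(\lambda_a)$, and with the Cayley transforms of Definitions \ref{d:cayleyr} and \ref{d:cayleyi}.

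Finally, I would promote this set-theoretic bijection to an isomorphism of $\caH$-modules $\caM_1\simeq \caM_2$, where $\caH\simeq\caH_1\simeq\caH_2$ is the common extended Hecke algebra from \cite[Section 9]{rt3}.  Because the $\caH$-module structure on $\caM_j$ is defined in \cite[Definition 9.4]{rt3} purely in terms of cross actions, Cayley transforms, and the bigrading data (the parity and grading conditions that determine which cases of the formula apply), and because all of these structures are intertwined by $\gamma\mapsto\gamma'$, the $\bbZ[q,q^{\pm 1}]$-linear extension sending $m_\gamma\mapsto m'_{\gamma'}$ is an $\caH$-module map.  This is the step where I expect the main subtlety: one must verify case-by-case in the formulas of \cite[Definition 9.4]{rt3} that every operator is expressed in terms of data matched by the isomorphism of bigradings, so that signs and parity functions (Remark \ref{r:length}) correspond correctly.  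Once this check is in place, the resulting isomorphism $\caM_1\simeq \caM_2$ restricts to (and is determined by) the bijection of the subsets $\ol\scB_1,\ol\scB_2$ indexing $\b_1,\b_2$, giving the desired isomorphism of blocks in the sense of Definition \ref{d:blockisom}.
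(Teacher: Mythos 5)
Your proposal is correct and follows essentially the same route as the paper's own sketch: use the isomorphism of bigradings so that Proposition \ref{p:standardblocks2} parametrizes $\ol{\bfB}_1$ and $\ol{\bfB}_2$ by the same set (via matching of $W(\gamma_j)_a$, admissible subspaces, and the cross action/Cayley structure), then upgrade the resulting bijection to an isomorphism of $\caH$-modules using the defining formulas of the extended Hecke algebra action in \cite{rt3}. The extra detail you supply (the cross-stabilizer matching via Proposition \ref{p:crossstab} and the explicit compatibility checks) is exactly what the paper leaves implicit in its sketch.
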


\noindent{\bf Sketch.}  
Since $\ghalf(\gamma_1)\simeq \ghalf(\gamma_2)$,
Proposition \ref{p:standardblocks2} parametrizes
$\ol{\bfB}_1$ and $\ol{\bfB}_2$ in terms of the same set, and gives
a bijection between them.  To conclude $\caB_1$ and $\caB_2$
are isomorphic, we need to verify that the induced map
$\caM_1 \rightarrow \caM_2$ is an isomorphism of $\caH_1 \simeq \caH_2$
modules.  This follows from the formulas
of \cite[Proposition 9.4]{rt3} defining the $\caH_i$ module
structure.
\qed

\sec{Definition of the dual regular character}
\label{s:defofdual}

Theorem \ref{t:gendual} reduces the duality for a block $\caB$ to the
existence of a single regular character with prescribed properties.
The point of this section is to construct that character.  This may be
done directly, but instead we choose to use results from
\cite{aherb}.  There is a good reason for doing so: 
as discussed in the introduction there is a
close connection between the results of the current paper and those
of \cite{aherb}.  We begin by describing this relationship.

\medskip

Suppose for the moment that $G$ is the real points of a connected
complex group $G_\C$, and $\caB$ is a block of irreducible
representations of $G$ with regular infinitesimal character.  As in
\cite{ic4} let $\ch \caB$ be a dual block for $\ch G$, a real form of
$\ch G_\C$. Write $\pi\rightarrow\ch\pi$ for the duality map on the
level of irreducible representations.  Define an equivalence relation
on elements of $\caB$ by $\pi \sim \eta$ if $\supp(\ch \pi) =
\supp(\ch \eta)$, where $\supp$ is discussed before Proposition
\ref{p:geom}.  Equivalence classes for this relation are
classical L-packets for $G$ \cite[Section 15]{ic4}.  From this point
of view, they are parametrized by a subset of the orbits of the
complexification, say $U_\C$, of the maximal compact subgroup of $\ch
G$ on the flag variety $\ch X$ for $\ch\g$.

Now fix a block of genuine representations $\wt \caB$ for a
nonlinear cover $\tH$ of a linear group $H$.
Suppose, for example in the setting of Theorem
\ref{t:mainintro}, there is group $H'$, with (typically nonlinear)
cover $\tH'$, and a block of genuine representations of $\tH'$ which
is dual to $\wt\caB$. Let $\tK'$ be a maximal compact subgroup of
$\tH'$, with complexification $\tK'_\C$.  Then we may define an
equivalence relation on $\wt\caB$ in the same way as in the previous
paragraph, to partition $\wt\caB$ into subsets, analogous to
L-packets for a linear group. As before, these are parametrized by the orbits of
$\tK'_\C$ on the  flag variety $X'$ for $\h'$.
(In spite of this analogy we do not refer to these subset as L-packets
for $\tH'$. See \cite[Section 19]{aherb}.)
Note that Proposition \ref{p:geom} implies that these
subsets are singletons if $\wt H$ is simply laced. 

Let $K'_\C$ be the complexification of a maximal compact subgroup of
$H'$. There is a surjection $\tK'_\C\rightarrow K'_\C$ with central
kernel; it follows that the orbits of $\tK'_\C$ and $K'_\C$ on $X'$
are the same. Now suppose $H'=\ch G$, the group appearing in Vogan
duality for the linear group $G$.
We conclude that the L-packets
in $\caB$ and the subsets  of $\caB'$ defined above are in natural bijection,
parametrized by $\tK'_\C$ or $U_\C$ orbits on the flag variety for
$\ch\g=\h'$. 
Examples of this phenomenon include  \cite{rt3}:
$$
G=GL(n,\R), \ch G=U(p,q), \tH=\wt{GL}(n,\R), \tH'=\wt U(p,q)
$$
and \cite{rt2}:
and
$$
G=SO(2p,2q+1), \ch G=Sp(2n,\R), \tH=\wt{Sp}(2n,\R), \tH'=\wt{Sp}(2n,\R).
$$

According to Langlands and Shelstad, associated to
each packet $\Pi$ in $\caB$
is a certain interesting {\it stable} virtual character $\Theta_\Pi$.
(For tempered L-packets this is the sum of the representations in the
packet.) This sum can be defined as in \cite[Definition 1.28]{abv} using Vogan
duality $\caB\leftrightarrow \ch\caB$. Suppose $\wt\Pi$ is the
corresponding subset of $\wt\caB'$. Definition 1.28 of 
\cite{abv} applies in this situation to give 
a distinguished genuine  virtual character $\Theta_{\wt\Pi}$ of
$\tH'$, although 
(since the notion of stability 
is not defined for $\tH'$)
it is not clear what properties it should have. In any event,
since duality is closely related to character theory, it is reasonable
to expect that the map $\Theta_\Pi\rightarrow \Theta_{\wt\Pi}$ has
nice properties.
In fact in the simply laced case \cite{aherb}, and for $\tH'=\wt{Sp}$
\cite{adams:lifting},  there is a theory of {\it lifting of characters}
which takes $\Theta_\Pi$ to $\Theta_{\wt\Pi'}$.

This reasoning may  be turned around: a theory of lifting of
characters can be used to give information about Vogan duality. 
This is what happens here:
we use some parts of the theory of \cite{aherb} to solve some
technical issues arising here in defining duality of characters.
We turn to this now.

\bigskip

Fix an admissible two-fold cover $\tG$
of a simply laced, real reductive linear group $G$
(cf.~ Sections \ref{s:notation} and \ref{s:nonlinear}),
and a block $\scB$ of genuine regular characters
of $\tG$.
Fix a weakly minimal element $\gamma$ of $\scB$, and let
$\b=\b(\irr(\gamma))$ be the corresponding block of representations. 
We will construct a group $G'$, with admissible cover $\tG'$, and a
genuine regular character $\gamma'$ of $\tG'$ 
such that Theorem \ref{t:gendual} applies to prove
that $\b'=\b(\irr(\gamma'))$ 
is  dual to $\b$.

Since we will be passing back and forth between linear and nonlinear
groups
 we change notation and write
$\wt\gamma=(\tH,\wt\Gamma,\wt\lambda)$ for a genuine regular character
of $\tG$. Let $\b=\b(\irr(\wt\gamma))$. 

We first make an elementary reduction. Recall a connected complex
group $G_\C$ is said to be {\it acceptable} if (for any Cartan
subgroup $H_\C$ and set of positive roots) one-half the sum of the
positive roots exponentiates to $H_\C$. 

\begin{lemma}
\label{l:Gd0}
There is a connected, reductive complex group $G'_\C$, with acceptable derived group,
real form $G'$, admissible cover $\tG'$, and block 
$\b'$ of genuine representations of $\tG'$ such that $\b'$ is
isomorphic to $\b$  (Definition \ref{d:blockisom}).
\end{lemma}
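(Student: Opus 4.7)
The plan is to replace $G_\C$ by a connected reductive complex group $G'_\C$ sharing the same Lie algebra, but whose derived group is the simply connected cover $\wt G_{d,\C}$ of $G_{d,\C}$. Since $\rho$ always lies in the weight lattice, any simply connected semisimple complex group is acceptable, so this would give the required property. Because the Lie algebra, Cartan involution, and root datum are all preserved under this construction, the bigrading data of \eqref{e:bigrading} will also be preserved, and Proposition \ref{p:isomorphic} will then produce the desired isomorphism of blocks essentially for free.

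First I would construct $G'_\C$. Let $p:\wt G_{d,\C} \to G_{d,\C}$ denote the simply connected cover of the derived group, and set $G'_\C = \wt G_{d,\C} \times Z(G_\C)^0$. The map $(g,z) \mapsto p(g)z$ is a central isogeny $G'_\C \to G_\C$ with finite kernel; the derived group of $G'_\C$ is $\wt G_{d,\C}$ (hence acceptable), and the Lie algebras agree. The antiholomorphic involution defining $G$ lifts uniquely to $G'_\C$; let $G'$ be its fixed points, which is a real reductive linear group mapping to $G$ via a central isogeny. By Proposition \ref{p:admissiblecovers}, $G'$ admits an admissible two-fold cover $\tG'$.

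Next I would produce a matching weakly minimal genuine regular character $\wt\gamma'$ of $\tG'$. Let $\tH'$ be the inverse image in $\tG'$ of the Cartan subgroup corresponding to $H$ under the Lie algebra identification. By Proposition \ref{p:ZH}, a genuine irreducible representation of $\tH'$ is determined by its restriction to $Z(\tG')$ together with its differential, the latter being forced by $\wt\lambda$ via \eqref{e:regchar4}. I would choose a genuine character of $Z(\tG')$ with this differential and form $\wt\gamma' = (\tH',\wt\Gamma',\wt\lambda)$. Weak minimality is automatic: by Proposition \ref{p:gradings} the set $\Delta^{r,-}_{\frac12}(\wt\gamma')$ depends only on $\theta$ and $\wt\lambda$, and hence coincides with $\Delta^{r,-}_{\frac12}(\wt\gamma) = \emptyset$.

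Finally, the bigrading $\ghalf(\wt\gamma')$ depends only on $\theta$, $\wt\lambda$, $\Delta(\wt\lambda)$ and $\Delta_{\frac12}(\wt\lambda)$ by \eqref{e:bigrading}, and these coincide for $\wt\gamma$ and $\wt\gamma'$; hence $\ghalf(\wt\gamma) \simeq \ghalf(\wt\gamma')$, and Proposition \ref{p:isomorphic} yields the desired isomorphism $\b \simeq \b'$ for $\b' := \b(\irr(\wt\gamma'))$. The main obstacle I expect is the compatibility check in the third step: one must verify that a genuine central character of $Z(\tG')$ with the prescribed differential always exists, which requires understanding how the finite isogeny $G' \to G$ interacts with the centers of the admissible covers. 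Once this is settled, the rigidity results of Propositions \ref{p:ZH} and \ref{p:gammarigid} make the construction of $\wt\gamma'$ essentially canonical.
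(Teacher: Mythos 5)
Your overall strategy is the paper's: pass to a group whose derived group is simply connected (hence acceptable), observe that the bigrading data only depend on $\theta$ and $\wt\lambda$, and invoke Proposition \ref{p:isomorphic}. But there is a genuine gap at exactly the step you flag as "the main obstacle": the \emph{existence} of the genuine regular character $\wt\gamma'=(\tH',\wt\Gamma',\wt\lambda)$ on your abstractly constructed admissible cover $\tGprime$. Propositions \ref{p:ZH} and \ref{p:gammarigid} give \emph{uniqueness} of a genuine representation of $\tH'$ with prescribed central character and differential; they do not produce one. To exist, the differential $\wt\lambda+\rho_i(\wt\lambda)-2\rho_{i,c}(\wt\lambda)$ forced by \eqref{e:regchar4} must exponentiate to a genuine character of $\wt{H'^0}$ compatibly with a genuine character of $Z(\tGprime)$ on the overlap $Z(\tGprime)\cap\wt{H'^0}$; this is a nontrivial lattice/integrality condition on the cover $\wt{H'^0}\to H'^0$, and nothing in your construction ties the cover $\tGprime$ to the given data $(\tG,\wt\Gamma)$ so as to guarantee it. Since the whole point of the lemma is to replace $\b$ by an isomorphic block on a better group, leaving this existence statement unresolved leaves the lemma unproved. (Two smaller issues: Proposition \ref{p:admissiblecovers} is stated only for real forms of simply connected semisimple groups, so it does not literally apply to $G'_\C=\wt G_{d,\C}\times Z(G_\C)^0$ — you would need Lemma \ref{l:admissible} instead; and the lemma is meant for a general real reductive linear group $G$, which need not be the full fixed-point set of an antiholomorphic involution of a connected $G_\C$, so "the involution defining $G$ lifts" requires a word of justification.)

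The paper's proof is arranged precisely to make the problematic existence automatic: it takes $G^{sc}$, the real points of the simply connected cover of the derived group of $G_\C$, maps it onto $G_d^0\subset G$, and defines $\tG^{sc}$ as the \emph{pullback} of the given cover $\tG\to G$; the genuine regular character is then obtained by restricting $\wt\Gamma$ to $\tH\cap\wt{G_d^0}$, taking an irreducible constituent, and pulling it back to $\tH^{sc}$, with $\wt\lambda^{sc}$ the restriction of $\wt\lambda$. No abstract existence claim is needed, and the bigrading is visibly unchanged, so Proposition \ref{p:isomorphic} applies. You could repair your argument the same way: instead of building $\tGprime$ from scratch via Proposition \ref{p:admissiblecovers} and then hunting for a genuine character, pull back the cover and the character along the isogeny $G'_d\to G_d^0$ (on the derived part), and note that the pulled-back cover is admissible (long real and noncompact imaginary roots remain metaplectic, since the groups $M_\alpha$ and their covers are unchanged); keeping the central torus factor is then possible but unnecessary.
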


\begin{proof}
Thanks to Proposition \ref{p:isomorphic} this isn't hard, and there are various ways to construct
$G'$. Here is one.

Let $G^{sc}_\C$ be the simply connected cover of the derived group of
$G_\C$, with real points $G^{sc}$. Then $G^{sc}$ maps onto the
identity component $G_d^0$ of the derived group $G$; let $\tG^{sc}$ be the pullback of
the restriction of the cover $\tG\rightarrow G$ to $G_d^0$. This is
the admissible cover of $G^{sc}$. Let $\wt H_d=\tH\cap \tG^0_d$, and let
$\tH^{sc}$ be the inverse image of this in $\tG^{sc}$.  

Let $\wt\lambda^{sc}$ be the restriction of $\wt\lambda$ to the Lie
algebra of $\tH^{sc}$.
Let $\wt\Gamma_d$ be an irreducible representation of $\wt{H_d}$
contained in
the restriction of $\wt\Gamma$ to $\tH_d$, and let $\wt\Gamma^{sc}$ be
the pullback of this to $\tH^{sc}$.

It is easy to check that
$\wt\gamma^{sc}=(\tH^{sc},\wt\Gamma^{sc},\wt\lambda^{sc})$ is
a genuine regular character of $\tG^{sc}$. It is also clear that it
has the same bigrading as $\wt\gamma$ (cf.~Proposition
\ref{p:gradings}). It follows from Proposition \ref{p:isomorphic}  that
$\b(\irr(\wt\gamma^{sc}))$ is isomorphic to $\b(\irr(\wt\gamma))$.
\end{proof}

Therefore, after replacing $G$ by $G'$ if necessary, we assume  $G$ is
the real points of $G_\C$, where $G_\C$ is connected reductive and 
the derived group of $G_\C$ is acceptable.

We recall some notation from \cite{aherb}.  We consider the {\it
admissible triple} $(\tG,G,G)$ \cite[Definition 3.14]{aherb}.
This amounts to saying that 
$G_\C$ is simply laced, with acceptable derived group, and 
$\tG$ is an admissible cover of $G$.  We fix a set of
lifting data for this triple \cite[Definition 7.1]{aherb}.
We don't need to spell out  this construction, including the choices
involved, but merely summarize the properties that we need.

Associated to this data is a character $\mu$ of the center
$\mathfrak z$ of $\g$ \cite[Definition 6.35]{aherb}.

Let $H$ be the Cartan subgroup of $G$ corresponding to $\tH$.
Lifting data for $(\tH,H,H)$  is chosen as in \cite[Section
17]{aherb}.
If $\Gamma$ is a character of $H$ then $\Lift_H^{\tH}(\Gamma)$ 
is $0$ or a sum of irreducible representations of $\tH$. 
See \cite[Section 10]{aherb}.

Lifting of character data is  defined as follows. 
Suppose $\gamma=(H,\Gamma,\lambda)$ is a regular character of $G$.
Write
\begin{equation}
\label{e:liftHnew}
\Lift_H^{\tH}(\Gamma e^{-2\rho_i+2\rho_{i,c}})
=\sum_i\wt\Gamma_ie^{-2\rho_i+2\rho_{i,c}}
\end{equation}
Here $\rho_i=\rho_i(\lambda),\rho_{i,c}(\lambda)$ are as in Definition \ref{d:reg}, 
and occur here due to the difference between character
data (cf. Section \ref{s:regularCharacters}) and {\it modified}
character data of \cite[Section 16]{aherb}.
By \cite[Section 17]{aherb} and \eqref{e:liftHnew} we have
\begin{equation}
2d\wt\Gamma_i=d\Gamma+\rho_i-2\rho_{i,c}-\mu.
\end{equation}
Then
$\Lift_G^{\tG}=\{(\tH,\wt\Gamma_i,\wt\lambda)\,|\, 1\le i\le n\}$
where $\wt\lambda=\frac12(\lambda-\mu)$,
and $\wt\Gamma_i$ are given by \eqref{e:liftHnew}.

An important role is played by the character $\zeta_{cx}$ of
\cite[Section 2]{aherb}. 
Let $\Gamma_r(H)$ be the subgroup of $H$  generated by the $m_\alpha$ for all
real roots $\alpha$. 
Let $G_d$ be the derived group of $G$, and let $H_d^0$ be the identity
component of $H_d=H\cap G_d$.
It is well-known that $H_d=\Gamma_r(H)H_d^0$.
Let $S$ be a set of complex roots of $H$ such that the set of all
complex roots is $\{\pm\alpha,\pm\theta\alpha\,|\,\alpha\in S\}$. Let
$\zeta_{cx}(h)=\prod_{\alpha\in S}\alpha(h)$ $(h\in \Gamma_r(H))$.
Then $\zeta_{cx}$ is independent of the choice of $S$.

Fix a genuine regular character $\wt\gamma$ of $\tG$. We do not yet
need to assume it is weakly minimal.

\begin{lemma}
\label{l:defineGamma}
There is a character $\Gamma$ of $H$ such that
$\gamma=(H,\Gamma,2\wt\lambda+\mu)$ is a regular character, and
$\wt\gamma\in\Lift_G^{\tG}(\gamma)$.
\end{lemma}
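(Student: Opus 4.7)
The plan is to invert the lifting formula \eqref{e:liftHnew} at the level of the Cartan $H$. First I would confirm that the differential forced on $\Gamma$ by Definition \ref{d:reg} and the differential forced by the requirement $\wt\gamma\in\Lift_G^{\tG}(\gamma)$ agree. Set $\lambda=2\wt\lambda+\mu$. Since $\mu$ is a character of $\mathfrak z$, we have $\langle\mu,\ch\alpha\rangle=0$ for every root $\alpha$, so $\rho_i(\lambda)=\rho_i(\wt\lambda)$, $\rho_{i,c}(\lambda)=\rho_{i,c}(\wt\lambda)$, and the regularity condition \eqref{e:reg} for $\lambda$ follows from that for $\wt\lambda$. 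Definition \ref{d:reg} then forces
\[
d\Gamma \;=\; \lambda+\rho_i(\lambda)-2\rho_{i,c}(\lambda) \;=\; 2d\wt\Gamma+\mu,
\]
where the second equality uses \eqref{e:regchar4} for $\wt\gamma$; this is also exactly what the formula preceding the lemma demands of any $\gamma$ whose lift contains $\wt\gamma$.

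Next I would construct $\Gamma$ component-by-component on $H$. By Lemma \ref{l:repT} together with Proposition \ref{p:ZH}, the representation $\wt\Gamma$ is determined by $d\wt\Gamma$ and the restriction of $\wt\Gamma$ to $Z(\tG)$. On the identity component $H^0$, the putative differential $2d\wt\Gamma+\mu$ exponentiates to a character of $H^0$: the piece $2d\wt\Gamma$ descends to $H^0$ because $\wt\Gamma^2$ kills the kernel of $\tG\to G$, and $e^\mu$ is available on $H^0$ thanks to the acceptability of the derived group of $G_\C$. To extend $\Gamma$ across the remaining components of $H$, in particular across the subgroup $\Gamma_r(H)$ generated by the $m_\alpha$ for $\alpha$ real, I would invoke the lifting data for $(\tH,H,H)$ fixed in \cite[Section 17]{aherb}: given $\wt\Gamma$, that data prescribes a value for $\Gamma$ on each $m_\alpha$ compatible with both the differential and the restriction to $Z(G)$, and these pieces glue to a well-defined character of $H$.

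Finally I would verify that $\gamma=(H,\Gamma,\lambda)$ is a regular character and that $\wt\gamma\in\Lift_G^{\tG}(\gamma)$. Regularity is immediate from the first paragraph. For the lifting assertion, I would expand $\Lift_H^{\tH}(\Gamma e^{-2\rho_i+2\rho_{i,c}})$ using the explicit formula of \cite[Section 17]{aherb} and check, via the rigidity supplied by Proposition \ref{p:ZH}, that $\wt\Gamma e^{-2\rho_i+2\rho_{i,c}}$ appears as a summand; by the central-character and differential matching arranged above, this is automatic. The main obstacle I expect is the consistent definition of $\Gamma$ on $\Gamma_r(H)$: the differential and central-character data essentially force $\Gamma$ on $H^0\cdot Z(G)$, but producing a genuine character of all of $H$ (rather than merely a set of matching values on generators) is precisely where the character $\zeta_{cx}$ and the machinery of \cite{aherb} do the nontrivial work.
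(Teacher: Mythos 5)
Your overall instinct---invert \eqref{e:liftHnew} on the Cartan and lean on the lifting machinery of \cite{aherb}---is in the spirit of the paper, whose proof of this lemma is literally a one-line citation of \cite[Lemma 17.13]{aherb} (with a self-contained argument given only for the weaker Lemma \ref{l:defineGamma2} that is actually used later). But as written your argument has a concrete error. The differential identity you assert is wrong: by \eqref{e:liftHnew} and the displayed formula just before the lemma, $\wt\gamma\in\Lift_G^{\tG}(\gamma)$ forces $2d\wt\Gamma=d\Gamma+\rho_i-2\rho_{i,c}-\mu$, i.e.\ $d\Gamma=2d\wt\Gamma+\mu-\rho_i+2\rho_{i,c}$, not $d\Gamma=2d\wt\Gamma+\mu$; the discrepancy $\rho_i-2\rho_{i,c}$ is exactly the twist by $e^{-2\rho_i+2\rho_{i,c}}$ in \eqref{e:liftHnew}, i.e.\ the passage between character data and the modified character data of \cite[Section 16]{aherb}. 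The two requirements (regularity via \eqref{e:regchar4} and membership in the lift) are indeed compatible, but only with the corrected formula; the character you actually construct on $H^0$, the exponential of $2d\wt\lambda$-data shifted by $\mu$ alone, has the wrong differential, so your triple $(H,\Gamma,2\wt\lambda+\mu)$ fails \eqref{e:regchar4} whenever $\rho_i\ne 2\rho_{i,c}$. This is precisely why the explicit formula \eqref{e:defineGamma}(a) carries the correction factors $e^{\rho}e^{-2\rho_i+2\rho_{i,c}}|e^{\rho_i-\rho}|$, and why acceptability of the derived group is needed for $e^{\rho}$ (not, as you say, for $e^{\mu}$, which lives on the central directions and has nothing to do with acceptability).

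The second problem is that the step carrying all the content---defining $\Gamma$ on $\Gamma_r(H)$ and the nonidentity components so that the pieces glue to a character of $H$, and then showing $\Lift_H^{\tH}(\Gamma e^{-2\rho_i+2\rho_{i,c}})$ is nonzero and contains $\wt\Gamma e^{-2\rho_i+2\rho_{i,c}}$---is not carried out; you defer it to ``the lifting data of \cite[Section 17]{aherb}'' and then concede in your final paragraph that this is where the nontrivial work lies. It is not ``automatic'' from matching differentials and central characters: the lift of a character of $H$ can vanish, and nonvanishing (and which genuine constituents occur) is governed by the values on the elements $m_\alpha$. That is exactly what the prescription $\Gamma=\zeta_{cx}$ on $\Gamma_r(H)$ in \eqref{e:defineGamma}(b), the compatibility \cite[(6.21)]{aherb} on $\Gamma_r(H)\cap H_d^0$, and ultimately \cite[Lemma 17.13]{aherb} handle. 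Either cite that lemma outright, as the paper does, or supply the gluing and nonvanishing argument; naming it as the expected obstacle does not close the gap.
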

This is an immediate consequence of \cite[Lemma 17.13]{aherb}.
In fact we only need the following weaker statement, whose proof is
fairly self-contained.

Choose a set of positive roots satisfying:
\begin{subequations}
\renewcommand{\theequation}{\theparentequation)(\alph{equation}}  
\label{e:special}
\begin{align}
\alpha\text{ complex},\,\alpha>0&\Rightarrow \theta(\alpha)<0,\\
\alpha\text{ imaginary},\, \alpha>0&\Rightarrow \langle\lambda,\ch\alpha\rangle>0.
\end{align}
\end{subequations}
Define $\rho,\rho_i$ and $\rho_{i,c}$ accordingly. 

Define:
\begin{subequations}
\renewcommand{\theequation}{\theparentequation)(\alph{equation}}  
\label{e:defineGamma}
\begin{align}
\Gamma(h)&=(\wt\Gamma^2e^{\rho})(h)e^{-2\rho_i+2\rho_{i,c}}(h)|e^{\rho_i-\rho}(h)|
\quad(h\in H^0_d),\\
\Gamma(h)&=\zeta_{cx}(h)\quad(h\in \Gamma_r(H)).
\end{align}
\end{subequations}
To be precise in (a), since the derived group  is acceptable $e^{\rho}(h)$ is well
defined, and clearly $e^{-2\rho_i+2\rho_{i,c}}(h)$ 
and $|e^{\rho_i-\rho}(h)|=|e^{2\rho_i-2\rho}(h)|^{\frac12}$ are 
well defined
since the exponents are sums of roots.

\begin{lemma}
\label{l:defineGamma2}
There is a regular character $\gamma=(H,\Gamma,\lambda)$ where
$\Gamma$ restricted to $H_d$ is given by \eqref{e:defineGamma}, and 
$\lambda$ satisfies
$\langle\lambda,\ch\alpha\rangle=\langle2\wt\lambda,\ch\alpha\rangle$
for all roots $\alpha$.   
\end{lemma}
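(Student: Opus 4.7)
The plan is to define $\lambda\in\h^*$ with $\lambda|_{\h_d}=2\wt\lambda|_{\h_d}$ (extended freely on the center of $\g$), construct $\Gamma$ on $H_d$ from formulas \eqref{e:defineGamma}(a)--(b), extend it to $H$, and verify the axioms of Definition \ref{d:reg}. Conditions \eqref{e:regchar1} and \eqref{e:reg} are inherited directly from $\wt\lambda$, and moreover $\rho_i(\wt\lambda)=\rho_i(\lambda)$ and $\rho_{i,c}(\wt\lambda)=\rho_{i,c}(\lambda)$ since the sign of $\langle\lambda,\ch\alpha\rangle$ equals that of $\langle 2\wt\lambda,\ch\alpha\rangle$ for every imaginary root $\alpha$.

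The technical heart is checking that formulas \eqref{e:defineGamma}(a) and \eqref{e:defineGamma}(b) agree on $\Gamma_r(H)\cap H_d^0$, so they glue to a character of $H_d=\Gamma_r(H)\cdot H_d^0$. Elements of the intersection are products $h=\prod_i m_{\alpha_i}$ of lifts of real-root reflections that happen to lie in $H_d^0$. Under admissibility each $m_\alpha$ has order $4$ by \eqref{e:metalpha}, so $\wt\Gamma(m_\alpha)^2\in\{\pm 1\}$ is controlled by the parity condition \eqref{e:parity}; the remaining factors in \eqref{e:defineGamma}(a) are explicitly computable on $\Gamma_r(H)$ in terms of the $\alpha_i$, and the desired identity reduces to a sign calculation comparing the result to $\zeta_{cx}(h)$. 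The positive system \eqref{e:special} is chosen so that the $\rho$-type characters decompose cleanly with respect to the type of root, which is what allows the signs to be tracked; this sign bookkeeping is the main obstacle. Once compatibility is established, $\Gamma$ extends from $H_d$ to $H$ because $H/H_d$ is abelian, so the extension exists and is unique up to a character of $H/H_d$.

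The remaining condition \eqref{e:regchar4} reduces to a short computation. On $\h_{0,d}$ the half-sum $\rho_i$ takes purely imaginary values (being a sum of imaginary roots), while $\rho-\rho_i=\rho_c+\rho_r$ takes real values; the latter uses \eqref{e:special}(a), which forces $\theta\rho_c=-\rho_c$ and hence that $\rho_c$ is supported on $\a_0$, where roots take real values. Consequently $|e^{\rho_i-\rho}|$ agrees with $e^{\rho_i-\rho}$ on $H_d^0$, and \eqref{e:defineGamma}(a) simplifies to $\Gamma(h)=\wt\Gamma^2(h)\,e^{-\rho_i+2\rho_{i,c}}(h)$ there. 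Differentiating and substituting $d\wt\Gamma=\wt\lambda+\rho_i-2\rho_{i,c}$ (the regular-character condition for $\wt\gamma$) yields $d\Gamma=2\wt\lambda+\rho_i-2\rho_{i,c}=\lambda+\rho_i-2\rho_{i,c}$ on $\h_d$, completing the verification.
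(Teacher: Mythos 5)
Your overall architecture matches the paper's: glue the two formulas of \eqref{e:defineGamma} along $\Gamma_r(H)\cap H_d^0$ to obtain $\Gamma|_{H_d}$, extend to $H$, and arrange $\lambda$; and your observation that on $H_d^0$ the formula collapses to $\wt\Gamma^2e^{-\rho_i+2\rho_{i,c}}$, so that $d\Gamma=2\wt\lambda+\rho_i-2\rho_{i,c}$ on $\h_d$, is correct and is essentially what the paper leaves as ``easy to check.'' The problem is that the one genuinely nontrivial step --- that \eqref{e:defineGamma}(a) and \eqref{e:defineGamma}(b) actually agree on $\Gamma_r(H)\cap H_d^0$ --- is precisely the step you do not prove: you call it ``a sign calculation'' and ``the main obstacle'' and then proceed as if it were settled. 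The paper does not carry out this computation either, but it disposes of it by citing the identity \cite[(6.21)]{aherb}, which is exactly the evaluation of $\zeta_{cx}$ against $(\wt\Gamma^2e^{\rho})e^{-2\rho_i+2\rho_{i,c}}|e^{\rho_i-\rho}|$ on the relevant elements of $\Gamma_r(H)$ lying in $H_d^0$. Without that identity, or an actual computation replacing it, there is no well-defined character of $H_d$ to extend, so the proof is incomplete at its core. Moreover your heuristic for the sign bookkeeping is slightly off: for a real (hence metaplectic) root $\alpha$, genuineness together with \eqref{e:metalpha} forces $\wt\Gamma(m_\alpha)^2=-1$ outright, so the parity condition is not what controls this factor; the real content is comparing $e^{\rho}$, $e^{-2\rho_i+2\rho_{i,c}}$, $|e^{\rho_i-\rho}|$ and $\zeta_{cx}$ on products $\prod_i m_{\alpha_i}$ that happen to land in $H_d^0$.

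There is also a smaller mismatch in your order of construction. You fix $\lambda$ first, ``extended freely on the center,'' and then extend $\Gamma$ from $H_d$ to $H$ arbitrarily; but \eqref{e:regchar4} must hold on all of $\h$, and on the center $\mathfrak{z}$ of $\g$ (where all roots vanish) it forces $\lambda|_{\mathfrak{z}}=d\Gamma|_{\mathfrak{z}}$, which independent choices will not generally satisfy. The paper's ordering avoids this: choose the extension of $\Gamma$ first and then define $\lambda:=d\Gamma-\rho_i(\wt\lambda)+2\rho_{i,c}(\wt\lambda)$, which makes \eqref{e:regchar4} automatic and still gives $\langle\lambda,\ch\alpha\rangle=\langle2\wt\lambda,\ch\alpha\rangle$ for all roots by the computation you did on $\h_d$.
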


\begin{proof}
By \cite[(6.21)]{aherb}  \eqref{e:defineGamma}(a) and (b) agree on
$\Gamma_r(H)\cap H_d^0$,
and so define $\Gamma|_{H_d}$.
Choose any extension of $\Gamma$ to $H$ and set
$\lambda=d\Gamma-\rho_i(\wt\lambda)+2\rho_{i,c}(\wt\lambda)$
(cf.~\eqref{e:regchar2}). 
It is easy to check that
$(H,\Gamma,\lambda)$ is a regular character, and that $\lambda$ and
$2\wt\lambda$ have the same restriction to $\h_d$.
\end{proof}

The proof of Lemma \ref{l:defineGamma} in \cite{aherb} is the same, except that 
we are more careful in choosing  the extension of $\Gamma$ to $Z(G)^0H_d$, so that
$\lambda=2\wt\lambda+\mu$ where $\mu\in\mathfrak z^*$ is given. This
isn't necessary for our purposes.

A crucial property of $\gamma$ is the following.

\begin{proposition}
\label{p:crucial}
Fix $\alpha\in \Delta(2\wt\lambda)$.

\medskip\noindent (a)
Suppose $\alpha$ is imaginary. Then $\alpha$ is compact if and only if
$\langle\wt\lambda,\ch\alpha\rangle\in\Z$. 

\smallskip\noindent (b)  Suppose $\alpha$ is real. Then $\alpha$
does not satisfy the parity condition with respect to $\gamma$ if and
only if $\langle\wt\lambda,\ch\alpha\rangle\in\Z$. 

\medskip
In other words
\begin{subequations}
\renewcommand{\theequation}{\theparentequation)(\alph{equation}}  
\begin{align}
\label{e:Deltar}
\Delta^{r,+}(\gamma)&=\Delta^r(\wt\lambda)\\
\label{e:Deltai}
\Delta^{i,+}(\gamma)&=\Delta^i(\wt\lambda).
\end{align}
\end{subequations}
\end{proposition}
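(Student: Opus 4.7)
The two parts of the proposition are of rather different character. Part (a) is essentially immediate from what has already been set up, while part (b) reduces to a single character identity.

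\textbf{Part (a).} I would simply apply Proposition \ref{p:gradings}(b) to the genuine regular character $\wt\gamma = (\tH, \wt\Gamma, \wt\lambda)$ of $\tG$. Since $G$ is simply laced, $\Delta_{\frac12}(\wt\lambda) = \Delta(2\wt\lambda)$, so every imaginary $\alpha \in \Delta(2\wt\lambda)$ lies in $\Delta^i_{\frac12}(\wt\lambda)$. Proposition \ref{p:gradings}(b) asserts $\Delta^{i,+}_{\frac12}(\wt\gamma) = \Delta^i(\wt\lambda)$, which unwinds to exactly the statement that an imaginary $\alpha\in\Delta(2\wt\lambda)$ is compact iff $\langle\wt\lambda,\ch\alpha\rangle\in\Z$. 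This gives (a) and \eqref{e:Deltai}.

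\textbf{Part (b).} The first move is to simplify the parity condition. Because $G$ is linear so that $H$ is abelian and $\Gamma$ is a character, and because $\langle\lambda,\ch\alpha\rangle = \langle 2\wt\lambda,\ch\alpha\rangle \in \Z$, the condition \eqref{e:parity} collapses to the single equality $\Gamma(m_\alpha) = -\epsilon_\alpha(-1)^{\langle\lambda,\ch\alpha\rangle}$, so parity fails iff $\Gamma(m_\alpha) = \epsilon_\alpha(-1)^{\langle\lambda,\ch\alpha\rangle}$. Tracking this in the two cases $\langle\wt\lambda,\ch\alpha\rangle\in\Z$ (so $(-1)^{\langle\lambda,\ch\alpha\rangle} = 1$) and $\langle\wt\lambda,\ch\alpha\rangle\in\Z+\tfrac12$ (so $(-1)^{\langle\lambda,\ch\alpha\rangle} = -1$), one checks that (b) is equivalent to the uniform identity
\[
\Gamma(m_\alpha) = \epsilon_\alpha \quad \text{for every real } \alpha\in\Delta(2\wt\lambda).
\]

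The second move is to evaluate $\Gamma(m_\alpha)$ from the explicit construction in \eqref{e:defineGamma}. Since $m_\alpha \in \Gamma_r(H)$ by definition of $\Gamma_r(H)$, formula \eqref{e:defineGamma}(b) instantly gives $\Gamma(m_\alpha) = \zeta_{cx}(m_\alpha)$. Thus (b) comes down to the purely structural identity
\[
\zeta_{cx}(m_\alpha) = \epsilon_\alpha,
\]
matching the product of signs $(-1)^{\langle\beta,\ch\alpha\rangle}$ over representatives of complex root orbits with Green's sign $\epsilon_\alpha$ from \cite[Definition 8.3.11]{green}. Both sides encode, modulo $2$, a count of complex positive roots $\beta$ with $\theta\beta<0$ and $\langle\beta,\ch\alpha\rangle\neq 0$, and the matching is established within the lifting framework of \cite{aherb}.

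\textbf{Main obstacle.} The only real content is the identity $\zeta_{cx}(m_\alpha) = \epsilon_\alpha$; matching the two sign conventions requires careful bookkeeping of contributions from the $\theta$-orbits of complex roots. As a cross-check one can use \eqref{e:defineGamma}(a): for $m_\alpha \in H_d^0$ one has $\wt\Gamma^2(m_\alpha) = -1$ by \eqref{e:metalpha} and the genuineness of $\wt\Gamma$, while $\langle\rho_i,\ch\alpha\rangle = \langle\rho_{i,c},\ch\alpha\rangle = 0$ for real $\alpha$ (by $\theta$-stability of the chosen positive system, which flips the sign of any imaginary-root pairing with $\ch\alpha$), so the factors $e^{-2\rho_i+2\rho_{i,c}}(m_\alpha)$ and $|e^{\rho_i-\rho}(m_\alpha)|$ both equal $1$; all that survives is $-(-1)^{\langle\rho,\ch\alpha\rangle}$, whose equality with $\epsilon_\alpha$ is classical.
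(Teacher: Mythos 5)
Your reduction coincides step for step with the paper's own proof: part (a) is exactly Proposition \ref{p:gradings}(b) applied to the genuine regular character $\wt\gamma=(\tH,\wt\Gamma,\wt\lambda)$, and for a real root $\alpha$ the two cases $\langle\wt\lambda,\ch\alpha\rangle\in\Z$ and $\langle\wt\lambda,\ch\alpha\rangle\in\Z+\frac12$ of the parity condition do collapse to the single requirement $\Gamma(m_\alpha)=\epsilon_\alpha$ (this is \eqref{e:Gammamalpha} in the paper), which by \eqref{e:defineGamma}(b) becomes the identity $\zeta_{cx}(m_\alpha)=\epsilon_\alpha$. All of this is correct.

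The problem is that you stop exactly where the substantive work begins: $\zeta_{cx}(m_\alpha)=\epsilon_\alpha$ is the entire content of part (b), and you assert it rather than prove it. The paper devotes a separate lemma (Lemma \ref{l:zetacx}) to it, proved directly from \cite[Lemma 8.3.9]{green}: one writes $\epsilon_\alpha=(-1)^d$ with $d=\sum_{X}\langle\beta,\ch\alpha\rangle$, where $X$ is the set of positive roots that become noncompact imaginary on the Cayley transform $H_\alpha$, and $\zeta_{cx}(m_\alpha)=(-1)^e$ with $e=\sum_{S}\langle\beta,\ch\alpha\rangle$, and then shows both exponents reduce mod $2$ to a sum over $\{\beta\in S \mid s_\alpha\beta=\theta\beta,\ \langle\beta,\ch\alpha\rangle \text{ odd}\}$, i.e.\ over \eqref{e:X}. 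Your description of the common count (complex positive $\beta$ with $\theta\beta<0$ and $\langle\beta,\ch\alpha\rangle\ne0$) is not the right set: the relevant condition is $s_\alpha\beta=\theta\beta$ (the complex roots that become imaginary for $H_\alpha$), and one must check that pairs $\beta,s_\alpha\beta$ with $\langle\beta,\ch\alpha\rangle$ even contribute trivially and that $s_\alpha\beta=-\theta\beta$ cannot occur; deferring this to ``the lifting framework of \cite{aherb}'' does not supply the argument, since the paper proves it here rather than quoting it. Your cross-check does not repair the gap either: $m_\alpha$ need not lie in $H_d^0$ (already for the split Cartan of $\SL(2,\R)$ one has $m_\alpha=-I\notin H_d^0$), so \eqref{e:defineGamma}(a) is typically not applicable to $m_\alpha$, and the closing equality $\epsilon_\alpha=-(-1)^{\langle\rho,\ch\alpha\rangle}$ is itself asserted as ``classical'' without justification.
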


\begin{proof}
If $\alpha$ is imaginary the assertion follows immediately from
\eqref{e:gradings}(b), so assume $\alpha$ is real.
We need to show (see Section \ref{s:bigradings}):
\begin{subequations}
\label{e:Gammamalpha}
\renewcommand{\theequation}{\theparentequation)(\alph{equation}}  
\begin{equation}
\Gamma(m_\alpha)=
\begin{cases}
\epsilon_\alpha(-1)^{\langle 2\wt\lambda,\ch\alpha}\rangle&\langle\wt\lambda,\ch\alpha\rangle\in\Z\\  
-\epsilon_\alpha(-1)^{\langle 2\wt\lambda,\ch\alpha}\rangle&\langle\wt\lambda,\ch\alpha\rangle\in\Z+\frac12,
\end{cases}
\end{equation}
i.e.
\begin{equation}
\Gamma(m_\alpha)=\epsilon_\alpha.
\end{equation}
\end{subequations}

\begin{lemma}
\label{l:zetacx}
We have $\zeta_{cx}(m_\alpha)=\epsilon_\alpha$.
\end{lemma}

\begin{proof}
Let
$d=\sum_X\langle\beta,\ch\alpha\rangle$
where $X$ is the set of positive  roots which become non-compact imaginary roots
on the Cayley transform $H_\alpha$ (cf.~Section \ref{s:ca}).
According to (\cite{green}, Lemma 8.3.9)
$\epsilon_\alpha=(-1)^d$.
A root $\beta$ of $H$ is imaginary for $H_\alpha$ if
$s_\alpha\theta\beta=\beta$, in which case $s_\alpha\beta$ is also
imaginary for $H_\alpha$.
If $\langle\beta,\ch\alpha\rangle\ne 0$  is even
$\beta$ and $s_\alpha\beta$ are both compact or both non-compact for
$H_\alpha$, otherwise one is compact and one is non-compact. Therefore
the contribution of $\{\pm\beta,\pm\theta\beta\}$ to $d$ is
$\langle\beta,\ch\alpha\rangle\pmod 2$, and we can replace $X$ with
\begin{equation}
\label{e:X}
\{\beta\in S\,|\,
s_\alpha\beta=\theta\beta,\langle\beta,\ch\alpha\rangle\text{
  odd}\}.
\end{equation}

On the other hand  $\zeta_{cx}(m_\alpha)=(-1)^e$ where 
$e=\sum_S\langle\beta,\ch\alpha\rangle$.
If $\langle\beta,\ch\alpha\rangle=0$ then $\beta$ does not contribute
to the sum; if $s_\alpha\beta\ne\pm\theta\beta$ the total contribution
of $\beta,s_\alpha\beta$ is even. It is easy to see the case
$s_\alpha\beta=-\theta\beta$ does not arise.
Therefore
we can replace $S$ with the set \eqref{e:X}.
\end{proof}

The proposition now follows from the Lemma,  \eqref{e:defineGamma}(b) and
\eqref{e:Gammamalpha}(b). 
\end{proof}

We now want to construct a group $G'$ with the following properties:

\begin{enumerate}
\item[(1)] $G'$ is the real points of a connected, complex reductive group $G'_\C$,
\item[(2)] There is a regular character $\gamma'$ for $G'$ such that the
  bigrading of $\gamma'$ is dual to that of $\gamma$,
\item[(3)] the derived group of $G'_\C$ is simply connected,
\item[(4)] $G'$ admits an acceptable cover.
\end{enumerate}

The main point is (2), and is provided by duality for $G$ \cite{ic4}.
A little tinkering may be required to satisfy conditions (3) and (4).

We  first construct a connected, reductive complex group $G'_\C$, with real
points $G'$, and a regular character $\gamma'$ of $G'$, such that 
$\b(\irr(\gamma'))$ is dual to
$\b(\irr(\gamma))$, with $\irr(\gamma)$ going to $\irr(\gamma')$. This
essentially follows from \cite[Theorem 13.13]{ic4}, although the group
$G'$ constructed there is not necessarily the real points of a
connected complex group. That we can construct such a group $G'$
follows from \cite[Theorem 1.24 and Chapter 21]{abv}. For the case of
integral infinitesimal character see \cite{bowdoin}.

Thus condition (1) holds, and (2) holds as in 
\cite[Theorem 11.1]{ic4}.

It is well-known that we can choose a connected, reductive complex group
$G''_\C$, with real points $G''$, with simply connected derived group,
such that there is a surjection $G''_\C\rightarrow G'_\C$
taking $G''$ onto $G'$. Pulling back $\irr(\gamma')$ to $G''$ we
obtain an isomorphic block for $G''$. After making this change 
we may assume conditions (1-3) hold.

If $G'$ is semisimple then condition (4) is automatic. For $G'$
reductive this condition may fail (although this is unusual). If this
is the case, replace $G'_\C$ with its derived group, with real points
$G'_d$, and $\gamma'$ with its restriction to $G'_d$. This regular
character has the same bigrading, and now conditions (1-4) hold. 

So we now assume we are given $G'$ and $\gamma'$ satisfying (1-4).
Write $\gamma'=(H',\Gamma',\lambda')$. We spell out the essential
condition (2) (cf.~\cite[Theorem 11.1]{ic4}).

There is an isomorphism $\phi:\h'\rightarrow
\C\langle\Delta(2\lambda)^\vee\rangle\subset \h$, satisfying
\begin{subequations}
\label{e:phi}
\renewcommand{\theequation}{\theparentequation)(\alph{equation}}  
\begin{equation}
\phi(\ch\Delta(\g',\h'))=\ch\Delta(2\wt\lambda).
\end{equation}
This satisfies
\begin{equation}
\phi^*(\theta(\alpha))=-\theta'(\phi^*(\alpha))
\end{equation}
where $\theta'$ is the Cartan involution for $G'$.
Furthermore $\lambda'$ is integral for $\Delta(\g',\h')$. Since the
derived group of $G'_\C$ is simply connected we may assume
\begin{equation}
\label{e:translate}
\langle 2\wt\lambda,\ch\alpha\rangle = 
\langle \lambda',\phi^*(\alpha)^\vee\rangle
\end{equation}
for all $\alpha\in\Delta(2\wt\lambda)$. 
Finally we have
\begin{align}
\phi^*(\Delta^{r,+}(\gamma))&=\Delta^{i,+}(\gamma'),\\
\phi^*(\Delta^{i,+}(\gamma))&=\Delta^{r,+}(\gamma').
\end{align}
\end{subequations}

Now let $\tG'$ be an admissible cover of $G'$.
We want to lift $\gamma'$ to a genuine regular character of $\tG'$. 

\begin{proposition}
There is a choice of transfer data  for $\tG'$ (as in \cite{aherb}) so that
$\Lift_{G'}^{\tG'}(\gamma')$ is nonzero.  
\end{proposition}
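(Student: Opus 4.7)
My plan is to reduce nonvanishing of the lift to a single character-matching condition on the real-root subgroup $\Gamma_r(H')$ of $H'$, and then to exhaust the freedom in the transfer data for $\tG'$ to arrange that condition.

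First I would pass to the Cartan: by the construction of the lift paralleling \eqref{e:liftHnew}, one has $\Lift_{G'}^{\tG'}(\gamma') \neq 0$ if and only if the Cartan-level lift $\Lift_{H'}^{\tH'}\!\bigl(\Gamma' e^{-2\rho'_i + 2\rho'_{i,c}}\bigr) \neq 0$, with $\rho'_i=\rho_i(\lambda')$ and $\rho'_{i,c}=\rho_{i,c}(\lambda')$. By the formalism of \cite[Sections 10 and 17]{aherb}, this Cartan lift is nonzero precisely when the compatibility equation
\[
\bigl(\Gamma' e^{-2\rho'_i + 2\rho'_{i,c}}\bigr)\Big|_{\Gamma_r(H')} \;=\; \zeta'
\]
holds, where $\zeta'$ is the character of $\Gamma_r(H')$ assembled from $\zeta'_{cx}$ together with the contribution of the genuine central character $\mu'$ of $\tG'$ that is part of the transfer data.

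Next I would show that the equation can always be forced. As the transfer data varies subject to the admissibility constraints of \cite[Definition 7.1]{aherb}, the character $\mu'$ ranges over a torsor for the genuine characters of $Z(\tG')$, and $\zeta'$ varies correspondingly. The integrality of $\lambda'$, available because the derived group of $G'_\C$ was arranged to be simply connected when constructing $\gamma'$, forces both sides of the equation to agree on $H'^0_d \cap \Gamma_r(H')$; so the only possible discrepancy lies in the finite quotient $\Gamma_r(H')/(\Gamma_r(H') \cap H'^0_d)$, which is exactly the part under the control of $\mu'$.

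The main obstacle will be verifying that the freedom in $\mu'$ really does cover the required discrepancy, with no hidden obstruction from the admissibility of $\tG'$. Here the weak minimality of $\wt\gamma$ is crucial: via the duality isomorphism $\phi$ of \eqref{e:phi}, weak minimality translates into the statement that every real root of $\gamma'$ is integral, which by the analog for $\gamma'$ of Proposition \ref{p:crucial} pins down the signs $\Gamma'(m_{\alpha'})$ on the generators of $\Gamma_r(H')$. A direct check then shows these signs lie in the image of the map $\mu' \mapsto \zeta'$, completing the argument.
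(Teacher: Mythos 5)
There is a genuine gap. The nonvanishing criterion you are implicitly invoking (\cite[Lemma 17.14]{aherb}) is not a condition on $\Gamma_r(H')$ alone: it requires a matching condition at \emph{all} elements $h\in H'_d$ with $h^2=1$, and the hard part is the condition on order-two elements of the identity component $H_d^{'0}$, where one must show that $\Gamma'(h)e^{-2\rho'_i+2\rho'_{i,c}}(h)$ agrees with $(\wt\chi^2e^{\rho'})(h)$ for a genuine character $\wt\chi$ of $\tH'$. This is exactly the content of the paper's Lemma \ref{l:key}: testing against $h'=\wt\exp(\pi i\ch\beta)$ for imaginary roots $\beta$ and $h'=\wt\exp(\pi i(\ch\beta+\theta'\ch\beta))$ for complex roots, and using Lemma \ref{l:wtexp} together with the dual-bigrading relations \eqref{e:phi}(d) (and \eqref{e:Deltar} for $\gamma$). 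That constraint is a fixed true-or-false statement about $\gamma'$ and the admissible cover $\tH'$; it cannot be deduced from integrality of $\lambda'$, and it cannot be "forced" by varying the transfer data. Your proposal never addresses it.

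Your use of the transfer-data freedom is also mislocated. The signs on $\Gamma_r(H')$ are completely determined: $\Gamma'(m_\beta)$ is fixed by the parity data of $\gamma'$ and $\zeta_{cx}(m_\beta)$ is computed by Lemma \ref{l:zetacx}, and their equality is \emph{proved} from the duality of bigradings (\eqref{e:phi}(e)), not arranged by a choice of $\mu'$; in \cite{aherb} $\mu'$ is a character of the center of $\g'$, so it only shifts $\wt\lambda'=\frac12(\lambda'-\mu')$ in the central directions and has no effect on values at the elements $m_\beta\in H'_d$. The choice of transfer data enters only after the hypothesis on $H'_d$ is verified (that is what \cite[Lemma 17.14]{aherb} supplies). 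Finally, weak minimality of $\wt\gamma$ plays no role here — the paper explicitly does not assume it at this stage — and your translation of it is incorrect: $\lambda'$ is integral on all of $\Delta(\g',\h')$ by the very construction of $G'$ from the integral root system $\Delta(2\wt\lambda)$, independently of any minimality hypothesis. What pins down the signs is the dual bigrading, i.e.\ \eqref{e:phi}(d)--(e) together with Proposition \ref{p:crucial} for $\gamma$.
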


\begin{proof}
This follows immediately from \cite[Lemma 17.14]{aherb}, the
hypothesis of which holds by the following key result.
\end{proof}

\begin{lemma}
\label{l:key}
Let $\wt\chi$ be any genuine character of $\tH'$. Choose positive
roots satisfying \eqref{e:special} with respect to $\lambda'$, 
and define  $\rho',\rho'_i$ and $\rho'_{i,c}$ accordingly.
Suppose $h\in H_d$, $h^2=1$. 
Then
\begin{equation}
\Gamma'(h)e^{-2\rho'_i+2\rho'_{i,c}}(h)=
\begin{cases}
(\wt\chi^2e^{\rho'})(h)&(h\in H_d^{'0}),\\
\zeta_{cx}(h)&(h\in \Gamma_r(H')).
\end{cases}
\end{equation}
\end{lemma}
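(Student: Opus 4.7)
The plan is to verify the stated identity separately on the two generating subsets $H_d^{'0}$ and $\Gamma_r(H')$ of $H'_d$; since $H'_d = \Gamma_r(H')\cdot H_d^{'0}$ and both sides behave multiplicatively where defined, this suffices. The structural input is that $\gamma'$ arises from the Vogan duality construction, so its bigrading is determined by \eqref{e:phi}.

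For $h \in \Gamma_r(H')$, I reduce to the generating involutions $m_{\alpha'}$ for real roots $\alpha' \in \Delta(\g', \h')$. Since imaginary roots are orthogonal to the real root $\alpha'$, one has $e^{-2\rho'_i + 2\rho'_{i,c}}(m_{\alpha'}) = 1$, so the LHS collapses to $\Gamma'(m_{\alpha'})$. The direct analog of Lemma \ref{l:zetacx} applied to $H'$ (whose proof is purely root-theoretic) gives $\zeta_{cx}(m_{\alpha'}) = \epsilon_{\alpha'}$, so it remains to show $\Gamma'(m_{\alpha'}) = \epsilon_{\alpha'}$. Combining \eqref{e:phi} with Proposition \ref{p:crucial} applied to $\gamma$ yields $\Delta^{r,+}(\gamma') = \phi^*(\Delta^i(\wt\lambda))$, and by \eqref{e:translate} this translates into: $\alpha' \in \Delta^{r,+}(\gamma')$ if and only if $\brach{\lambda'}{\alpha'} \in 2\Z$. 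A case analysis via the parity formula \eqref{e:parity} then gives $\Gamma'(m_{\alpha'}) = \epsilon_{\alpha'}$ in both regimes: if $\brach{\lambda'}{\alpha'}$ is even, parity fails and $\Gamma'(m_{\alpha'}) = \epsilon_{\alpha'} \cdot 1 = \epsilon_{\alpha'}$; if it is odd, parity holds and $\Gamma'(m_{\alpha'}) = -\epsilon_{\alpha'}\cdot(-1) = \epsilon_{\alpha'}$.

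For $h \in H_d^{'0}$ with $h^2 = 1$, I use that characters of the connected abelian group $H_d^{'0}$ agree on $2$-torsion if and only if their differentials agree modulo $2 X^*(H_d^{'0})$. By Definition \ref{d:reg}, $d\Gamma' = \lambda' + \rho'_i - 2\rho'_{i,c}$, so the differential of the LHS is $\lambda' - \rho'_i$, while the differential of the RHS is $2\,d\wt\chi + \rho'$. Since $\wt\chi^2$ factors through $H'$ and two choices of genuine $\wt\chi$ alter $2\,d\wt\chi$ by an element of $2 X^*(H_d^{'0})$, the identity reduces to
\[
\lambda' - \rho'_i - \rho' \;\equiv\; 2\,d\wt\chi \pmod{2 X^*(H_d^{'0})}
\]
for one (hence any) genuine $\wt\chi$; equivalently, $\tfrac{1}{2}(\lambda' - \rho'_i - \rho')$ must lie in the coset of differentials of genuine characters of $\tH'$.

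The hard part will be this last congruence. Verifying it requires combining the explicit form of $\lambda'$ produced by Vogan duality via \eqref{e:translate} with a description of the genuine differential coset of $\tH'$, which by Proposition \ref{p:ZH} is controlled by $\tG'$ being an admissible cover of a simply laced $G'$. The hypothesis that $G'_\C$ has simply connected derived group enters essentially, ensuring that integrality with respect to $\Delta(\g', \h')$ coincides with membership in $X^*(H_d^{'0})$. A direct root-theoretic computation, using the positive root choice \eqref{e:special} and decomposing $\rho'$ into its imaginary, real, and complex half-sum components, compares the two sides modulo $2 X^*$. This verification is precisely what is needed to apply \cite[Lemma 17.14]{aherb} and conclude that $\Lift_{G'}^{\tG'}(\gamma')$ is nonzero.
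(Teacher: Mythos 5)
Your treatment of the $\Gamma_r(H')$ case is correct and essentially coincides with the paper's: orthogonality of real and imaginary roots removes the factor $e^{-2\rho'_i+2\rho'_{i,c}}$, Lemma \ref{l:zetacx} (whose root-theoretic proof transfers to $H'$) gives $\zeta_{cx}(m_{\alpha'})=\epsilon_{\alpha'}$, and $\Gamma'(m_{\alpha'})=\epsilon_{\alpha'}$ is exactly the combination of the parity formula with $\Delta^{r,+}(\gamma')=\phi^*(\Delta^{i,+}(\gamma))=\phi^*(\Delta^i(\wt\lambda))$ (that is, \eqref{e:phi}(e) together with \eqref{e:Deltai}) and the translation \eqref{e:translate}.

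The genuine gap is in the $H_d^{'0}$ case. You reduce the identity to a congruence of differentials modulo $2X^*(H_d^{'0})$ and then stop, declaring that ``the hard part will be this last congruence'' and gesturing at a root-theoretic verification; but that congruence \emph{is} the content of the lemma here, and the ingredients you name (Proposition \ref{p:ZH}, ``a description of the genuine differential coset of $\tH'$'') will not produce it. What is actually needed, and what the paper does, is: write a $2$-torsion element of $H_d^{'0}$ as $\exp(\pi i Z')$ with $Z'$ in the lattice generated by the coroots $\ch\beta$ for $\beta$ imaginary and $\ch\beta+\theta'\ch\beta$ for $\beta$ complex (this is where simple connectedness of the derived group enters); kill $\rho'_r$ and $\rho'_{cx}$ against such $Z'$ using \eqref{e:special}, reducing to $e^{\pi i\langle\lambda',Z'\rangle}=\wt\chi(\wt{\exp}(2\pi iZ'))$; and then evaluate both sides on these generators. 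The right-hand side is forced, \emph{independently of the choice of} $\wt\chi$, by Lemma \ref{l:wtexp}: in an admissible cover $\wt{\exp}(2\pi i\ch\beta)=\pm1$ according to whether $\beta$ is compact or noncompact, and $\wt{\exp}(2\pi i(\ch\beta+\theta'\ch\beta))=\pm1$ according to whether $\langle\beta,\theta'\ch\beta\rangle=0$. This group-theoretic fact about where these elements land in the cover, not a description of the coset of genuine differentials, is the key tool. The left-hand side is computed from \eqref{e:translate} and matched using \eqref{e:Deltar} of Proposition \ref{p:crucial} together with \eqref{e:phi}(d) in the imaginary-coroot case, and using Lemma \ref{l:wtexp}(b) applied to both $\tH'$ and $\tH$ (via a genuine character $\wt\tau$ of $\tH^0$ with differential $\wt\lambda$) in the complex-coroot case. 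None of this appears in your sketch, so the half of the lemma that carries the real work is unproved. (A minor additional imprecision: for the noncompact connected group $H_d^{'0}$ your ``if and only if'' reduction is only valid, and only needed, in one direction.)
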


\begin{proof}
First suppose $\beta\in\Delta(\g',\h')$ is a real root, so
$m_{\beta}\in \Gamma_r(H')$. 
We need to show $\Gamma'(m_{\beta})=\zeta_{cx}(m_{\beta})$. By
Lemma \ref{l:zetacx}
we can write this as
\begin{equation}
\Gamma'(m_{\beta})=
\begin{cases}
\epsilon_\beta(-1)^{\langle \lambda',\ch\beta\rangle}&  \langle
\lambda',\ch\beta\rangle\in 2\Z\\
-\epsilon_\beta(-1)^{\langle \lambda',\ch\beta\rangle}&  \langle
\lambda',\ch\beta\rangle\in 2\Z+1.
\end{cases}
\end{equation}
This says
\begin{equation}
\beta\in\Delta^{r,+}(\gamma')\Leftrightarrow
\langle\lambda',\ch\beta\rangle\in 2\Z.
\end{equation}
By \eqref{e:translate} if we let
$\alpha=\phi^{*-1}(\beta)\in\Delta(2\wt\lambda)$
we can write this as 
\begin{equation}
\beta\in\Delta^{r,+}(\gamma')\Leftrightarrow
\langle\wt\lambda,\ch\alpha\rangle\in \Z.
\end{equation}
By \eqref{e:phi}(b) $\alpha$ is imaginary, and by 
by \eqref{e:Deltai} equality holds on the right hand side 
if and only if $\alpha\in\Delta^{i,+}(\gamma)$, so we need to show
\begin{equation}
\phi^*(\Delta^{i,+}(\gamma))=\Delta^{r,+}(\gamma')
\end{equation}
which is precisely 
\eqref{e:phi}(e).

Now suppose $Z'\in i\mathfrak h'_0$, $\exp(2\pi i Z')=1$, and $h'=\exp(\pi
iZ')\ne1$. 
We have to show
\begin{equation}
\Gamma'(h')e^{-2\rho'_i+2\rho'_{i,c}}(h')=(\wt\chi^2e^{\rho'})(h').
\end{equation}
The left hand side is
$e^{\pi i\langle d\Gamma'-2\rho'_i+2\rho'_{i,c},Z'\rangle}$.
Using \eqref{e:regchar4} the exponent is 
$\pi i\langle \lambda'-\rho'_i,Z'\rangle$, so we have to
show
\begin{equation}
e^{\pi i\langle \lambda'-\rho'_i,Z'\rangle}=
(\wt\chi^2e^{\rho'})(h').
\end{equation}
Write $\rho'=\rho'_r+\rho'_i+\rho'_{cx}$. 
Then $\langle \rho'_r,Z'\rangle=0$, and 
$\langle \rho'_{cx},Z'\rangle=0$ by \eqref{e:special}. Therefore 
we can replace $\rho'_i$ with $\rho'$ on the left hand side, and we 
are reduced to showing
\begin{equation}
e^{\pi i\langle \lambda',Z'}\rangle=\wt\chi^2(h').
\end{equation}

Write $\wt{\exp}$ for the exponential map $\h'_0\rightarrow \tH'$. 
Then the right hand
side is  $\wt\chi^2(\exp(\pi iZ'))=\wt\chi(\wt{\exp}(2\pi iZ'))$, so
we have to show
\begin{equation}
\label{e:toshow}
e^{\pi i\langle \lambda',Z'\rangle}=\wt\chi(\wt{\exp}(2\pi iZ')).
\end{equation}

Suppose $\beta$ is an imaginary root. Since 
the derived group of $G'_{\C}$ is simply connected 
we can take
$Z'=\ch\beta$. By Lemma \ref{l:wtexp}(a) the right hand side is $1$
if $\beta$ is compact, and $-1$ otherwise. In other words 
the right hand side is $1$ if and only if $\beta\in\Delta^{i,+}(\gamma')$.

By \eqref{e:phi}(a) $\alpha=\phi^{*-1}(\beta)$ is real, and 
by \eqref{e:translate} the left hand side is equal to $e^{2\pi
  i\langle\wt\lambda,\ch\alpha\rangle}$.
Therefore by \eqref{e:Deltar}  the left hand side is $1$ if and only
if $\alpha\in \Delta^{r,+}(\gamma)$.
So we have to show
$\phi^*(\Delta^{r,+}(\gamma))=\Delta^{i,+}(\gamma')$, which is 
\eqref{e:phi}(d).
This proves \eqref{e:toshow} in
this case.

Now suppose $\beta$ is a complex root, in which case we can take
$Z'=\ch\beta+\theta'\ch\beta$. 
We have to show
\begin{equation}
e^{\pi i\langle\lambda',\ch\beta+\theta'\ch\beta\rangle}
=
\wt\chi(\wt\exp(2\pi i(\ch\beta+\theta'\ch\beta)))
\end{equation}

By \eqref{e:phi}(b) and \eqref{e:translate} we can write the exponent on the left hand side
as 
\begin{equation}
\pi i \langle 2\wt\lambda, \ch\alpha-\theta\ch\alpha\rangle
=
\pi i \langle2\wt\lambda, \ch\alpha+\theta\ch\alpha\rangle
-2\pi i \langle 2\wt\lambda, \theta\ch\alpha\rangle.
\end{equation}
The final term is an integral multiple of $2\pi i$, so the left hand
side is
$e^{2\pi i\langle \wt\lambda,\ch\alpha+\theta\ch\alpha\rangle}$.
Now $\wt\lambda$ is the differential of a genuine character $\wt\tau$
of $\tH^0$ so we can write this as 
\begin{equation}
\wt\tau(\wt\exp(2\pi i(\ch\alpha+\theta\ch\alpha))).
\end{equation}
Therefore we need to show
\begin{equation}
\wt\tau(\wt\exp(2\pi i(\ch\alpha+\theta\ch\alpha)))=
\wt\chi(\wt\exp(2\pi i(\ch\beta+\theta'\ch\beta))).
\end{equation}
This follows from Lemma \ref{l:wtexp}(b): both sides are $1$ if and
only if $\langle \alpha,\theta\ch\alpha\rangle=0$.

Every element of $H_d^0$ of order $2$ is of the form $\exp(\pi iZ')$
for $Z'$ as above, so this completes the proof.
\end{proof}

Let $\wt\gamma'$ be any constituent of 
$\Lift_{G'}^{\tG'}(\gamma')$. We can write
$\wt\gamma'=(\tH',\wt\Gamma',\wt\lambda')$ for some $\wt\Gamma'$, and
$\wt\lambda'=\frac12(\lambda'-\mu')$. Here $\mu'$ is an element of the
dual of the center of $\g'$, depending on the choice of lifting data.
Recall by \eqref{e:translate} for any root $\alpha$ we have
$\langle 2\wt\lambda,\ch\alpha\rangle = \langle
\lambda',\phi^*(\alpha)^\vee\rangle$.
Therefore 
\begin{equation}
\label{e:wtlambda'}
\langle \wt\lambda,\ch\alpha\rangle = \langle
\wt\lambda',\phi^*(\alpha)^\vee\rangle.
\end{equation}
Therefore conditions \ref{e:assumptions}(b) and (c) of
Theorem \ref{t:gendual} hold. 

We now assume $\wt\gamma$ is weakly minimal.
Then the conditions of Theorem \ref{t:gendual} hold, and we
conclude $\b(\irr(\wt\gamma'))$ is dual to
$\b(\irr(\wt\gamma))$. 

We summarize the preceding discussion.

\begin{theorem}
\label{t:main}
Assume:

\begin{enumerate}
\item  $G$ is the real points of 
a connected, reductive, simply laced  complex group with acceptable
derived group, and $\tG$
is an admissible cover of $G$;
\item $\scB$ is a block of 
genuine regular characters of $\tG$, and $\wt\gamma$ is 
a weakly minimal element of $\scB$;
\item $\gamma$ is a  regular character of $G$ so that $\wt\gamma$ is
  a constituent of 
$Lift_G^{\tG}(\gamma)$ for some
choice of lifting data;
\item 
$G'$ is the real points of a connected, reductive complex group, with
simply connected derived group, and $\tG'$ is an admissible cover of
$G'$;
\item $\gamma'$ is a regular character of $G'$ such that $\gamma$
and $\gamma'$ have dual (weak) bigradings;
\end{enumerate}
Choose lifting data so
$\Lift_{G'}^{\tG'}(\gamma')$ is nonzero, and let $\wt\gamma'$ be any
constituent of this lift. Then $\b(\irr(\wt\gamma'))$ is dual to
$\b(\irr(\wt\gamma))$. 
\end{theorem}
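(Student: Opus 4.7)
The plan is to apply Theorem \ref{t:gendual} to the weakly minimal element $\wt\gamma$ of $\scB$ together with the character $\wt\gamma'$ obtained by lifting. This reduces the problem to checking three inputs to Theorem \ref{t:gendual}: an admissible cover $\tG'$ of a simply laced linear group, a genuine regular character $\wt\gamma'$ of $\tG'$, and an isomorphism $\phi \colon \h'_d \to \C\langle \Delta_{\frac12}(\wt\lambda)^\vee \rangle \subset \h$ intertwining the Cartan involutions up to sign and matching both integral and half-integral root systems. Inputs (a), (b), and (c) of Theorem \ref{t:gendual} correspond, respectively, to hypothesis (4), to the existence of a nonzero lift, and to the bigrading compatibility.

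First, I would establish the existence and genuineness of $\wt\gamma'$. By hypothesis (5) we have a regular character $\gamma'$ of $G'$ with bigrading dual to that of $\gamma$, where $\gamma$ satisfies the bigrading identities \eqref{e:Deltar} and \eqref{e:Deltai} of Proposition \ref{p:crucial}. Lemma \ref{l:key} then verifies precisely the character identity on the center of the compact part of $\tH'$ required by \cite[Lemma~17.14]{aherb}, which produces a nonzero $\Lift_{G'}^{\tG'}(\gamma')$. The verification in Lemma \ref{l:key} splits naturally into two cases: for real roots $\beta$ of $(G',H')$ one uses Lemma \ref{l:zetacx} together with condition \eqref{e:phi}(e) $\phi^*(\Delta^{i,+}(\gamma)) = \Delta^{r,+}(\gamma')$; and for elements of $H_d^{'0}$ of order two, one reduces via Lemma \ref{l:wtexp} to the two conditions \eqref{e:phi}(d) (imaginary case) and the vanishing of $\langle \alpha, \theta \ch\alpha \rangle$ (complex case).

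Second, I would extract the isomorphism $\phi$ from the duality for linear groups. Hypothesis (5) is exactly the conclusion of \cite[Theorem 11.1]{ic4} (applied to the linear blocks containing $\gamma$ and $\gamma'$), which provides $\phi \colon \h' \to \C\langle\ch\Delta(2\wt\lambda)\rangle$ satisfying \eqref{e:phi}(a)--(e), in particular the translation identity \eqref{e:translate} and its consequence \eqref{e:wtlambda'}. Since $G$ is simply laced, $\Delta_{\frac12}(\wt\lambda) = \Delta(2\wt\lambda)$, and similarly for $G'$; therefore \eqref{e:wtlambda'} upgrades to both
\[
\phi^*(\Delta(\wt\lambda)) = \Delta(\wt\lambda'), \qquad
\phi^*(\Delta_{\frac12}(\wt\lambda)) = \Delta_{\frac12}(\wt\lambda'),
\]
which are conditions \eqref{e:phiWmatch} and \eqref{e:phiWmatch2}. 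Condition \eqref{e:phitheta} on Cartan involutions is exactly \eqref{e:phi}(b). Thus all hypotheses of Theorem \ref{t:gendual} are verified, and we conclude $\b(\irr(\wt\gamma'))$ is dual to $\b(\irr(\wt\gamma))$.

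The main obstacle is Lemma \ref{l:key}: the proof must translate integrality conditions on $\wt\lambda$ (which govern the bigrading of $\gamma$ via Proposition \ref{p:crucial}) into compactness/parity conditions on roots for $\gamma'$, and this translation has to be compatible with the exponential map on $\tH'$. The acceptability of the derived group, the simply-connected hypothesis on the derived group of $G'_\C$, and the passage between $\wt\lambda$ and $\lambda = 2\wt\lambda + \mu$ all enter at this point, and it is here that conditions \eqref{e:phi}(d) and (e) are consumed. Once Lemma \ref{l:key} is in hand, the rest of the argument is the bookkeeping summarized above.
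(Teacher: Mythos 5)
Your proposal is correct and follows essentially the same route as the paper: verify the hypotheses of Theorem \ref{t:gendual} by combining Proposition \ref{p:crucial} for $\gamma$, the isomorphism $\phi$ of \eqref{e:phi} with the translation identity \eqref{e:translate}/\eqref{e:wtlambda'} to obtain \eqref{e:phitheta}--\eqref{e:phiWmatch2}, and Lemma \ref{l:key} together with \cite[Lemma 17.14]{aherb} to produce the nonzero lift $\wt\gamma'$. This is the argument the paper itself assembles in Section \ref{s:defofdual} before stating Theorem \ref{t:main}.
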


\begin{remark}
It is clear from the proof that the assumptions are stronger than
necessary, and various weaker versions of the theorem are possible.   
\end{remark}

\begin{remark}
\label{rem:singular}
We have assumed that the infinitesimal character is regular.
There are several possibilities for formulating a version
of Definition \ref{d:vogandual} in the case of singular
infinitesimal character.  
This is explained in \cite{abv}, 
and we omit the details here.
\end{remark}

\sec{Examples}
\label{s:examples}

\subsec{Example: $SL(2,\R)$}
The unique nontrivial two-fold cover of $G$ is an admissible cover.
At infinitesimal character $\rho/2$ (a typical half-integral
infinitesimal character) there are four irreducible genuine
representations: the two oscillator representations, each of which has
two irreducible summands.  It is easy to see that at this
infinitesimal character  $\tG$ has two
genuine blocks with distinct central characters, each containing two
irreducible representations.  Each block is self-dual.
See \cite[Section 4]{rt1}.

\subsec{Example: $GL(n,\R)$} 
\label{s:gln}
A two-fold cover $\tG$ of $G = \GL(n,\R)$ is admissible if it
restricts to the (unique) nontrivial cover of $\SL(n,\R)$; assume this
is the case.  Suppose $\lambda$ is a half-integral infinitesimal
character.  Then $\Delta_{\frac12}(\lambda)$ is of type $A_p\times
A_q$.  If $n$ is even $\tG$ has a unique block at infinitesimal
character $\lam$; if $n$ is odd, there are two isomorphic blocks.  Fix
such a block $\b$.  Consider an admissible cover $\tG'$ of $G' =
\U(p,q)$; there are three, corresponding to the three double covers of
$\U(p) \times \U(q)$, and the exact one we choose is not important.
Fix an infinitesimal character $\lam'$ for which $\tG'$ has genuine
discrete series.  This implies that $\Delta_{\frac12}(\lambda)$ is of
type $A_p\times A_q$.  Then $\tG'$ has a unique block $\b'$ at
infinitesimal character $\lam'$, and $\b$ is dual to $\b'$.  

Two cases of the previous paragraph are worth keeping
in mind.  If $\lam$ is  integral  $\b$ consists of a single
irreducible principal series, $G'$ is compact, the cover $\tG'$
splits, and $\b'$ consists of a single finite-dimensional
representation of the compact linear group $\tG'$.  On the other hand,
consider $\lam = \rho/2$.  Then each block $\b$ for $\tG$ has an
interesting unitary representation, say $\pi$, that is as small as the
infinitesimal character permits.  (In other words, $\pi$ might be called
unipotent.)  The group $G'$ is either isomorphic to $\U(n/2,n/2)$ (if
$n$ is even) or $\U\left((n+1)/2, (n-1)/2\right)$ if $n$ is odd.  In
either case $\tG$ is quasisplit and hence has genuine large discrete
series.  Each such is characterized by its infinitesimal character.
Fix $\lam'$ for which such a large discrete series $\pi'$ exists.  Let
$\b'$ denote the block containing $\pi'$.  Then $\b$ is dual to
$\b'$ and the duality maps $\pi$ to $\pi'$.
For further details see
\cite[Part II]{rt3}.

\medskip
\subsec{Example: Minimal Principal Series of Split Groups}
\label{ex:split}
Suppose $G$ is the split real form of a connected, 
simply connected reductive complex group $G_\C$.
Suppose $\tG$ be 
an admissible cover of $G$ and 
$H$ is a split Cartan subgroup. Fix $\lambda\in\h^*$.  Suppose
$\pi$ is a genuine principal series representation of $\tG$ with
infinitesimal character $\lambda$. By Example \ref{ex:verysimple} there
is such a representation, determined by its central character.  Let
$h=exp(\pi i\lambda)$ and
$$
G'_\C=Cent_{G_\C}(h^2).
$$
Let
$\theta'=int(h)$
considered as an involution of $G'_\C$.
Note that $G'_\C=G_\C$ if $\lambda$ is half-integral.
Let $G'$ be the
corresponding real form, and let $\tG'$ be an admissible cover.
Let $K'_\C$ be the fixed points of $\theta'$ acting on $G'_\C$.
Thus
$$
\Delta(\g',\h)=\Delta(2\lambda),\quad
\Delta({\mathfrak k}',\h)=\Delta(\lambda).
$$
The Cartan subgroup of $G'$ corresponding to
$\h$ is compact, and $G'$ has a discrete series representation with
infinitesimal character $\lambda$, determined by its central
character. Then $\pi$ and $\pi'$ have dual bigradings, and the map
$\pi\rightarrow \pi'$ extends to a duality of blocks.

For example if $\lambda$ is integral the
principal series representation is irreducible. Dually $G'$ is
compact, the trivial cover of $G'$ is admissible, and $\pi'$ is a
finite dimensional representation of $\tG'$.

If $\pi$ is any genuine irreducible representation of $\tG$ then we
may apply a sequence of Cayley transforms to obtain a minimal
principal series representation. This reduces the computation of the
dual of the block containing $\pi$ 
to the previous case; in particular $G'_\C$ and $G'$
are computed from the infinitesimal character as above.

\subsec{Example: Minimal Principal Series of Split Groups (continued)}

The duality of Theorem \ref{t:main} is for a single block, as in
\cite{ic4}. If $G$ is a real form of a connected reductive
algebraic group this duality can be promoted, roughly speaking, to a
duality on all blocks simultaneously. See \cite{abv} for details.
It would be very interesting to do this also in the nonlinear case.
We
limit our discussion here to minimal principal series of (simple) split
groups.

For simplicity we assume $G$ is the split real form of a connected,
semisimple complex group, and let $\tG$ be its admissible cover.
Fix an infinitesimal character for $G$. 
Then the genuine minimal principal series
representations of $G$ with this infinitesimal character are
parametrized by the genuine characters of $Z(\tG)$.

There are a finite number $\pi_1,\dots, \pi_n$ of such
representations, generating distinct blocks $\b_1,\dots,\b_n$. 
It follows from Section \ref{s:kl} that there are natural bijections
between these blocks, and the corresponding representations of the
extended Hecke algebra are isomorphic. Here are the number of such
blocks for the simple groups \cite[Section 6, Table 1]{shimura}:
\begin{equation}
\begin{aligned}
A_{2n}, E_6, E_8&: 1\\
A_{2n+1},D_{2n+1}, E_7&: 2\\
D_{2n}&: 4
\end{aligned}
\end{equation}

In fact it can be shown that the group of {\em outer automorphisms} of
$\tG$ acts transitively on $\{\pi_1,\dots,\pi_n\}$.

\subsec{Example: Discrete Series}
\label{s:discrete}

This is dual to the previous example. Suppose $G$ is a real form of a
connected, semisimple group, which contains a compact Cartan
subgroup. Let $\tG$ be the admissible cover of $G$, and fix an
infinitesimal character $\lambda$ for which $\tG$ has a genuine
discrete series representation. 

The number of genuine discrete series representations of $\tG$ with
infinitesimal character $\lambda$ depends on the real form.
If $G$ is quasisplit it can be shown, for example by a case-by-case
analysis, that the genuine discrete series representations of
$\tG$ with infinitesimal character $\lambda$ are in bijection with
the genuine principal series representation of the split real form of
$G$ discussed in the previous section.
However if $G$ is not quasisplit there may be fewer genuine discrete
series of $\wt{G}$.

For example suppose $G_\C=SL(2n,\C)$. As in Example \ref{s:gln} if
$G=SU(p,q)$ then $\tG$ has $1$ genuine discrete series representation with
infinitesimal character $\lambda$, if $p\ne q$, and $2$ if $p=q$.

\subsec{Example: Genuine Discrete Series for $E_7$}

There are three noncompact real forms of
$E_7$: split, Hermitian ($\mathfrak k=\R\times D_5$) and
{\it quaternionic} ($\mathfrak k=A_1\times A_5$). All have a compact
Cartan subgroup.
We label these $E_7(s), E_7(h)$ and $E_7(q)$, respectively.
Then at the appropriate infinitesimal character (for example
$\rho-\frac12\lambda_i$ where
$\lambda_i$ is an appropriate fundamental weight), $E_7(s)$ and $E_7(h)$
have two genuine discrete series representations. 
On the other hand $E_7(q)$ has only $1$ genuine discrete series
representation.

It is worth noting that $Z(\tG)=\Z/4\Z$ in the
split and Hermitian cases, and $\Ztwo\times\Ztwo$ in the quaternionic
real form. Let $\pi$ be a genuine discrete series representation of
$\tG$. If $G$ is split or quaternionic $\pi$ and $\pi^*$ have
distinct central characters, and are therefore the two genuine discrete
series representations. If $G$ is quaternionic then $\pi\simeq\pi^*$.

\subsec{Example: Genuine discrete series for $D_n$}

Finally we specialize the discussion in Example \ref{s:discrete} to
type $D_n$.

Write $\lambda=(\lam_1, \dots, \lam_n)$ in the usual coordinates.
Then $\lambda$ is half-integral
if
\[
\lambda_i\in\frac12\Z \quad\text{for all i}
\]
or
\[
\lam_i \in \pm \frac14 + \Z\quad\text{for all i.}
\]
In the first case
$\Delta(\lambda)$ is of type $D_p\times D_q$, $G=Spin(2p,2q)$, and
$\wt K\simeq Spin(2p)\times Spin(2q)$.
Write $\lambda=(a_1,\dots, a_p,b_1\,\dots, b_q)$ with $a_i\in\Z$ and
$b_j\in \Z+\frac12$. Then $\tG$ has discrete series representations
with Harish-Chandra parameter (with the obvious notation) 
$(a_1,\dots,a_{p-1}, \epsilon a_p;b_1,\dots,b_{q-1},\epsilon b_q)$ 
with $\epsilon=\pm1$. If $p=q$ it also has two more,
with Harish-Chandra parameter
$(b_1,\dots, b_{p-1},\epsilon b_p;a_1,\dots,a_{p-1},\epsilon a_p)$.
These two or four genuine discrete series representations have
distinct central characters.

In the second case
$\Delta(\lambda)$ is of type $A_{n-1}$ and
$\wt K$ is a two-fold cover of $U(n)$.
In this case $G$
the real form of $Spin(2n,\C)$ corresponding to the 
real form 
$\mathfrak s\mathfrak o^*(2n)$ of 
$\mathfrak s\mathfrak o(2n,\C)$, and a two-fold cover of $SO^*(2n)$.

\subsec{Example: $\mathrm{G}_2$}
\label{e:g2}
Let $G$ be the connected split real group of type $G_2$, and let $\tG$
an admissible two-fold cover of $G$ (which is unique up to
isomorphism).  
Since all real roots for $\tG$ are metaplectic in the sense of
Definition \ref{d:meta},
most of the results of this paper apply to this case.  
We will use Proposition
\ref{p:geom} to visualize the duality of Theorem \ref{t:main}.  The
closure order for the orbits of $K_\C$ (or $(\tK)_\C$) on the flag
variety $\frB$ for $\frg$ are depicted in Figure \ref{f:1}.  The
darkened nodes in the figure are irrelevant at this point, and will be
explained in a moment.

If we fix trivial infinitesimal character $\rho$ and the block
for $G$ containing the trivial representation, the fibers of the map
$\supp$ (described before Proposition \ref{p:geom}) for $G$ are all
singletons, with the exception of the open obit which supports
three representations.  The duality of \cite{ic4}
interchanges these three with the three
discrete series (supported on the three closed orbits) and interchanges
the unique representations supported on each of the intermediate orbits
in a way consistent with the obvious symmetry of those orbits
in the closure order of Figure \ref{f:1}.

Now consider the unique block $\caB$ of genuine representations of
$\tG$ with infinitesimal character $\rho/2$.  The image of
the injective map $\supp$ in this case is represented by the darkened nodes
in Figure \ref{f:1}.  The duality of Theorem \ref{t:main} takes
$\caB$ to itself and 
corresponds to an order-reversing involution of the
darkened  nodes.

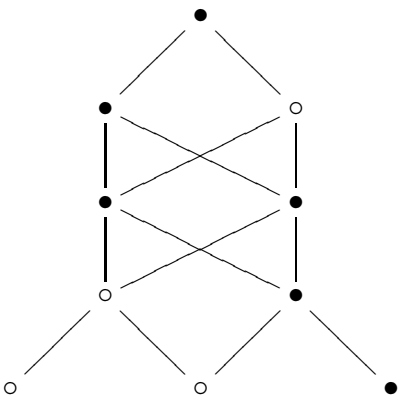
\begin{figure}
\label{f:1}
$$
{\xymatrixcolsep{2pc}
\xymatrixrowsep{2pc}
\xymatrix{
&&\bullet\\
&\bullet\ar@{-}[ur]
&&\circ\ar@{-}[ul]\\
&\bullet\ar@{-}[urr]\ar@{-}[u]&&\bullet\ar@{-}[ull]\ar@{-}[u]\\
&\circ\ar@{-}[urr]\ar@{-}[u]&&\bullet\ar@{-}[ull]\ar@{-}[u]\\
\circ\ar@{-}[ur]
&&\circ\ar@{-}[ur]\ar@{-}[ul]&&\bullet\ar@{-}[ul]
}
}
$$
\caption{The closure order for $K_\C$ orbits on the flag variety
of type $G_2$.}
\end{figure}

\def\cprime{$'$} \def\cftil#1{\ifmmode\setbox7\hbox{$\accent"5E#1$}\else
  \setbox7\hbox{\accent"5E#1}\penalty 10000\relax\fi\raise 1\ht7
  \hbox{\lower1.15ex\hbox to 1\wd7{\hss\accent"7E\hss}}\penalty 10000
  \hskip-1\wd7\penalty 10000\box7}
  \def\cftil#1{\ifmmode\setbox7\hbox{$\accent"5E#1$}\else
  \setbox7\hbox{\accent"5E#1}\penalty 10000\relax\fi\raise 1\ht7
  \hbox{\lower1.15ex\hbox to 1\wd7{\hss\accent"7E\hss}}\penalty 10000
  \hskip-1\wd7\penalty 10000\box7}
  \def\cftil#1{\ifmmode\setbox7\hbox{$\accent"5E#1$}\else
  \setbox7\hbox{\accent"5E#1}\penalty 10000\relax\fi\raise 1\ht7
  \hbox{\lower1.15ex\hbox to 1\wd7{\hss\accent"7E\hss}}\penalty 10000
  \hskip-1\wd7\penalty 10000\box7}
  \def\cftil#1{\ifmmode\setbox7\hbox{$\accent"5E#1$}\else
  \setbox7\hbox{\accent"5E#1}\penalty 10000\relax\fi\raise 1\ht7
  \hbox{\lower1.15ex\hbox to 1\wd7{\hss\accent"7E\hss}}\penalty 10000
  \hskip-1\wd7\penalty 10000\box7}
  \def\cftil#1{\ifmmode\setbox7\hbox{$\accent"5E#1$}\else
  \setbox7\hbox{\accent"5E#1}\penalty 10000\relax\fi\raise 1\ht7
  \hbox{\lower1.15ex\hbox to 1\wd7{\hss\accent"7E\hss}}\penalty 10000
  \hskip-1\wd7\penalty 10000\box7}

\end{document}